\DeclareRobustCommand\widecheck[1]{{\mathpalette\@widecheck{#1}}}
\def\@widecheck#1#2{%
    \setbox\z@\hbox{\m@th$#1#2$}%
    \setbox\tw@\hbox{\m@th$#1%
       \widehat{%
          \vrule\@width\z@\@height\ht\z@
          \vrule\@height\z@\@width\wd\z@}$}%
    \dp\tw@-\ht\z@
    \@tempdima\ht\z@ \advance\@tempdima2\ht\tw@ \divide\@tempdima\thr@@
    \setbox\tw@\hbox{%
       \raise\@tempdima\hbox{\scalebox{1}[-1]{\lower\@tempdima\box
\tw@}}}%
    {\ooalign{\box\tw@ \cr \box\z@}}}
\definecolor{darkred}{rgb}{0.4,0,0} %\definecolor{name}{model}{color-spec}, model = rgb -> color-spec = (red, green, blue)
\definecolor{darkblue}{rgb}{0,0,0.4}
\definecolor{darkgreen}{rgb}{0,.4,0}
\newcommand{\D}{\partial}
\newcommand{\CC}{\mathcal{C}} % Besov infty Symbol
\newcommand{\E}{\mathbb{E}}
\newcommand{\PP}{\mathbb{P}} %Probability Symbol
\newcommand{\EE}{\mathbb{E}} %Expectation Symbol
\newcommand{\dd}{\,\mathrm{d}} %Integral Pointer
\newcommand{\RR}{\mathbb{R}} %Real Numbers
\newcommand{\NN}{\mathbb{N}} %Physical Numbers
\newcommand{\fra}{\mathfrak{a}}
\newcommand{\supp}{\mathrm{supp}\,} %support
\newcommand{\sgn}{\mathrm{sgn}}
\newcommand{\bR}{\mathbb{R}}
\newcommand{\tu}{\tilde{u}}
\newcommand{\al}{\alpha}
\newcommand{\eps}{\varepsilon} %\varepsilon
\newcommand{\VVert}[1]{{\left\vert\kern-0.25ex \left\vert\kern-0.25ex \left \vert #1 
    \right\vert\kern-0.25ex \right\vert\kern-0.25ex \right\vert}} %triple norm
\newcommand\bP{\mathbb{P}}
\newcommand\cE{\mathcal{E}}
\newcommand{\wto}{\rightharpoonup}
\newcommand{\tr}{\tilde{r}}
\newcommand*{\beq}{\begin{equation}}
\newcommand*{\eeq}{\end{equation}}
\newcommand{\bit}{\begin{itemize}}
\newcommand{\eit}{\end{itemize}}
\newcommand{\tA}{{\tilde{A}}}
\newcommand{\tfra}{{\tilde{\mathfrak{a}}}}
\renewcommand{\le}{\lesssim}
\newcommand{\tsigma}{{\tilde\sigma}}
\newcommand{\txi}{{\tilde\xi}}
\DeclareMathOperator*{\supess}{\mathrm{sup}\,\mathrm{ess}}
\theoremstyle{plain}
\newtheorem{theorem}{Theorem}[section]
\newtheorem{proposition}[theorem]{Proposition}
\newtheorem{corollary}[theorem]{Corollary}
\newtheorem{lemma}[theorem]{Lemma}
\theoremstyle{definition}
\newtheorem{definition}[theorem]{Definition}
\newtheorem{assumption}[theorem]{Assumption}
\theoremstyle{remark}
\newtheorem{remark}[theorem]{Remark}
\numberwithin{equation}{section}
\title{Ergodicity for Stochastic Porous Media Equations}
\author{Konstantinos Dareiotis, Benjamin Gess, Pavlos Tsatsoulis}
\date{}
\begin{document}

\maketitle

\begin{abstract}
The long time behaviour of solutions to stochastic porous media equations on smooth bounded domains with Dirichlet boundary data is studied. Based on
weighted $L^{1}$-estimates the existence and uniqueness of invariant measures with optimal bounds on the rate of mixing are proved. Along the
way the existence and uniqueness of entropy solutions is shown.

\noindent \textsc{Keywords}: Stochastic porous medium, entropy solutions, invariant measures, optimal mixing rates.

\noindent \textsc{MSC 2010}: 37A25, 76S99
\end{abstract}

\begin{spacing}{0.05}
 \tableofcontents
\end{spacing}

\section{Introduction}

In this work we prove the existence and uniqueness of invariant measures, with optimal estimates on the rate of mixing, for stochastic porous media 
equations 
\begin{equs}
\begin{cases}
 & \partial_{t}u(t,x)=\Delta\left(|u|^{m-1}u\right)(t,x)+\sum_{k=1}^\infty\sigma^{k}(x,u(t,x))\,\dot{\beta}^{k}(t)\\
 & u(0)=\xi\\
 & u|_{\partial Q}=0
\end{cases}\label{eq:spm_intro}
\end{equs}
on smooth bounded domains $Q\subseteq\RR^{d}$, where $(\beta^{k})_{k\geq1}$ is a sequence of independent Brownian motions, $m\in (1,\infty)$, $\xi$ is the initial condition (which lies in a suitably weighted $L^1_x$ space) and $(\sigma^{k})_{k\geq1}$ 
is a sequence of H\"older continuous coefficients (for the exact assumptions see Section \ref{s:form} and Section \ref{s:ergod}). Our main result is the following contraction estimate (see Theorem \ref{thm:pol_contr} below): There 
exists a (uniform in the initial condition) constant $C>0$ such that for each two entropy solutions $u(\cdot;\xi)$, $u(\cdot;\txi)$ to \eqref{eq:spm_intro} with initial conditions $\xi$, $\tilde \xi$ respectively we have
\begin{equs}
\EE\|u(t;\xi) - u(t;\txi)\|_{L_{w;x}^{1}}\leq Ct^{-\frac{1}{m-1}}, \, t>0,
\end{equs}
were $L_{w;x}^{1}$ is the weighted $L^{1}_x$ space with weight $w$ given by the solution to $\Delta w=-1$ with zero Dirichlet boundary conditions on $Q$. This contraction estimate implies the existence and uniqueness
of an invariant measure $\mu$ to \eqref{eq:spm_intro} and the following optimal bound on the rate of mixing,
\begin{equs}
\sup_{\xi\in L^1_{w;x}}\sup_{\|F\|_{\mathrm{Lip}(L_{w;x}^{1})}\leq1}|P_{t}F(\xi)-\int_{L^1_{w;x}} F(\txi)\mu(\mathrm{d}\txi)|\leq Ct^{-\frac{1}{m-1}}, \, t>0,
\end{equs}
where $\mathrm{Lip}\left(L_{w;x}^{1}\right)$ denotes the space of Lipschitz continuous functions from $L_{w;x}^{1}$ to $\RR$ and $P_{t}$ the Markov semigroup on $B_{b}(L_{w;x}^{1})$ associated to \eqref{eq:spm_intro}
(see Theorem \ref{thm:pol_mix} below). On the route to these results we further prove the existence, uniqueness and stability of entropy solutions to \eqref{eq:spm_intro}, and the time continuity of
entropy solutions with values in $L_{\omega}^{1}L_{w;x}^{1}$ (see Theorems \ref{thm:uniq_exist}, \ref{thm:stability} and \ref{thm:L^1_cont} below).

Stochastic porous media equations of the type \eqref{eq:spm_intro} informally appear as continuum limits of interacting branching particle processes. More precisely, Méléard and Roelly have shown in \cite{MR93} that, under
appropriate rescaling, the mean field limit of branching particle processes that interact through a potential $V$ solves a non-local, non-linear stochastic diffusion equation of the type
\begin{equs}
\partial_t u(t,x)=\frac{1}{2} \Delta\left(|u|(V*u)\right)(t,x) + \sqrt{b(x,u(t,x))u(t,x)} \, \dot{W}(t), \label{eq:branch}
\end{equs}
where $\dot{W}(t)$ is space-time white noise. Informally, localising the particle interactions, by taking $V$ to a Dirac mass, leads to a stochastic partial differential equation (SPDE) of the type \eqref{eq:spm_intro}, 
albeit with space-time white noise. So far, this last step has rigorously been justified only in the deterministic case
by Lions, Mas Gallic \cite{LMG01} and Carrillo, Craig, Patacchini \cite{CCP19}.

On a broader scope, the aim of the present article is to understand the applicability of the dissipativity approach (see \cite[Section 11.6]{DPZ14} and the references therein) to the ergodicity of SPDEs with multiplicative noise. While the focus is on stochastic porous medium equations 
the ideas are equally relevant for the case of semi-linear SPDEs (see Remark \ref{rmk:semilinear} below). Let us briefly and informally recall the dissipativity approach in the case of additive noise, that is, let $u$, $\tu$
be solutions to 
\begin{equs}
\partial_t u(t,x)= \Delta\left(|u|^{m-1}u\right)(t,x)+ G\dot{W}(t),
\end{equs}
with zero Dirichlet boundary conditions on $Q$ and diffusion coefficients $G$. Then, informally, using Lemma \ref{lem:lwr_bd} below we have 
\begin{equs}
 \partial_t \|u-\tu\|_{H^{-1}_x}^{2} & =2 \left(\Delta\left(|u|^{m-1}u\right)-\Delta\left(|\tu|^{m-1}\tu\right),u-\tu\right)_{H^{-1}_x}
 = - 2 (|u|^{m-1}u-|\tu|^{m-1}\tu,u-\tu)_{L^2_x} \\
 & \leq -2 C_{m} \|u-\tu\|_{L^{m+1}_x}^{m+1} 
 \leq -2 C_{m} \|u-\tu\|_{H^{-1}_x}^{m+1}, \label{eq:intro-contraction}
\end{equs}
where $H^{-1}_x:=(H_{0;x}^{1})^{*}$ and $C_m>0$. This implies contraction estimates of the type
\begin{equs}
\|u(t)-\tu(t)\|_{H^{-1}_x}^{2} & \leq C_{m} t^{-\frac{1}{m-1}}, \, t>0,
\end{equs}
and polynomial rates of mixing
\begin{equs}
\sup_{\xi\in H^{-1}_x} \sup_{\|F\|_{\mathrm{Lip}(H^{-1}_x)}\leq 1} \left|P_t F(\xi) - \int_{H^{-1}_x} F(\txi) \mu(\mathrm{d}\txi)\right|\leq Ct^{-\frac{1}{m-1}}, \, t>0.
\end{equs}
This argument is restricted to additive noise. Indeed, if we consider \eqref{eq:spm_intro} then, following the previous computations,
\begin{equs}
\partial_t \E\|u-\tu\|_{H^{-1}_x}^{2} & \leq - 2 C_{m} \E\|u- \tu\|_{H^{-1}_x}^{m+1} + 
\sum_{k=1}^\infty \E\|\sigma^{k}(u)-\sigma^{k}(\tu)\|_{H^{-1}_x}^{2}.
\end{equs}
Even when $u\mapsto\sigma^{k}(\cdot,u)$ is Lipschitz continuous in $H^{-1}_x$ (which is rarely the case, cf.~the
discussion in \cite{DGG19}), it is unclear how to prove stability without a smallness assumption on the Lipschitz 
constant of the coefficients $\sigma^{k}$.

The main insight of the present work is that the weighted topology $L_{w;x}^{1}$ introduced here is better adapted to the dissipativity approach for SPDEs with multiplicative noise.
In fact, it is shown that the stochastically perturbed equation enjoys the same stability properties as the deterministic PDE when considered in this weighted topology.

\begin{remark}\label{rmk:semilinear} The same questions can be asked in the case of semi-linear SPDEs, such as, 
\begin{equs}
\partial_t u(t,x)=(\Delta u(t,x)+f(u(t,x))) + \sum_{k=1}^\infty\sigma^{k}(x,u(t,x))\,\dot{\beta}^{k}(t), \label{eq:semilinear}
\end{equs}
with zero Dirichlet boundary conditions. For simplicity let us assume that $f:\RR\to\RR$ is non-decreasing. Again, an $L^{2}$-based approach suffers
from the It\^o-correction terms, since, informally,
\begin{equs}
 &\partial_t \E\|u-\tu\|_{L^2_x}^{2} 
 \\ 
 & \quad = 2 \E(\Delta(u-\tu) + f(u)-f(\tu), u-\tu)_{L^2_x} + \sum_{k=1}^\infty \E\|\sigma^{k}(u)-\sigma^{k}(\tu)\|_{L^2_x}^{2}\\
 & \quad =-2\E\|\nabla(u-\tu)\|_{L^2_x}^{2}+ 2 \E(f(u)-f(\tu),u-\tu)_{L^2_x} + \sum_{k=1}^\infty \|\sigma^{k}(u)-\sigma^{k}(\tu)\|_{L^2_x}^{2}\\
 & \quad \leq - 2C_P \E\|u-\tu\|_{L^2_x}^{2} + \sum_{k=1}^\infty\|\sigma^{k}(u)-\sigma^{k}(\tu)\|_{L^2_x}^{2},
\end{equs}
where $C_P$ is the Poincar\'e constant. This implies stability only if the Lipschitz constant of $u\mapsto\sigma^{k}(\cdot,u)$ is small enough. Hence, even if the deterministic PDE is stable,
this is not necessarily inherited by the stochastically perturbed equation.

However, the weighted norm $\|\cdot\|_{L_{w;x}^{1}}$ is better adapted to study the stability of \eqref{eq:semilinear}, in the sense that one expects the estimate 
\begin{equs}
 \partial_t \E\|u-\tu\|_{L_{w;x}^{1}} = -\E\|u-\tu\|_{L_{x}^{1}} + \E \int_Q (f(u)-f(\tu)) \sgn(u-\tu) w \dd x
 \leq - C \E\|u-\tu\|_{L_{w;x}^{1}},
\end{equs}
which immediately implies exponential mixing, with the same rate as in the deterministic case. 
%It may also be noted that, in contrast to $L^p_x$ based arguments, the stability in $L^1_{w;x}$ is independent of the size of the Poincar\'e constant. 
\end{remark}

\subsection{Comments on the literature}

The existence, uniqueness and mixing properties of solutions to stochastic porous medium equations have attracted considerable attention in the literature. However, all available results
are essentially restricted to additive noise in the following sense: Either, only purely additive noise can be treated or it is assumed that the noise contains a sufficiently
non-degenerate additive part. Therefore, SPDEs of the type \eqref{eq:spm_intro} were out of reach of existing results.

The available results can be categorised in two classes: The first class of results relies on the dissipativity approach exploiting the contractive properties of the deterministic
porous medium equation, while the second class of results relies on the mixing effects of the random perturbation. As such, these two classes lead to essentially different assumptions
and results.

The first results on the existence of invariant measures for stochastic porous medium equations were obtained by Da Prato, Röckner \cite{DPR04-2,DPR04} and Bogachev, Da Prato, Röckner
\cite{BDPR04}. The dissipativity approach to prove the existence, uniqueness and rates of mixing was first applied to stochastic porous medium equations by Da Prato, Röckner, Rozovskii,
Wang in \cite{DPRRW06}, see also the more recent monograph by Barbu, Da Prato, Röckner \cite{BDPR16}. As explained above, this approach is restricted to additive noise. Since the obtained
estimates are based on the contractive properties of the (deterministic) porous medium operator, no non-degeneracy assumptions on the noise have to be assumed and the obtained rates of mixing
are of polynomial type. In \cite{BGLR11} the dissipativity approach was further used by Beyn, Gess, Lescot, Röckner in order to prove that the random attractor consists of a single random point.
A generalisation of the dissipativity approach, based on coupling arguments, has been introduced by Gess, Tölle in \cite{GT14,GT16} allowing to prove the ergodicity of generalised porous medium 
equations in cases where no strict contraction estimates, such as \eqref{eq:intro-contraction}, apply.

Concerning the second class of results, in the case of purely additive noise
\begin{equs}
\partial_t u(t,x) = \Delta\left(|u|^{m-1}u\right)(t,x) + G\dot{W}(t)
\end{equs}
couplings by change of measure were constructed by Wang in \cite{Wan07,Wan13}, see also Liu \cite{Li09}. This construction relies on a non-degeneracy assumption on the diffusion coefficients $G$. In particular,
it has to be assumed that $G$ is surjective onto the energy space $L^{m+1}_x$. Since this assumption competes with the smoothness assumption on the noise required by the well-posedness theory (cf.~e.g.~Liu,
Röckner \cite{LR15}), this restricts the applicability of these results to one spatial dimension. Because this approach exploits the non-degeneracy of the noise by means of establishing Harnack inequalities 
on the resulting Markov semigroup, the associated rate of mixing is of exponential type, that is, there is a unique invariant measure $\mu$ and constants $\lambda>0$, $C\geq 0$ such that
\begin{equs}
\sup_{\xi}\|P_{t}\delta_\xi-\mu\|_{TV}\leq C e^{-\lambda t}, \, t>0.
\end{equs}
Lower bounds on the exponential rate $\lambda$ have been obtained by Wang in \cite{Wan15b}. Flandoli, Gess, and Scheutzow in \cite{FGS17} and Gess in \cite{Ge13} used these methods in order to prove synchronisation by
noise, in the sense that the random attractor was shown to consist of a single random point. This line of arguments has been further improved by Wang in \cite{Wan15a}, where coupling by change of measure has been replaced 
by reflection coupling, which allowed to also include perturbations by multiplicative noise, while retaining the non-degeneracy assumption on the additive noise part, that is,
\begin{equs}
\partial_t u(t,x) = \Delta\left(|u|^{m-1}u\right)(t,x) + B(u)\dot{W}^{1}(t) + G \dot{W}^{2}(t),
\end{equs}
with $G$ being non-degenerate as above. This work seems to be the only previous result on ergodicity for stochastic porous medium equations with multiplicative noise in the literature.
However, in this work $B$ is still assumed to be Lipschitz continuous in $H^{-1}_x$, which in the case of $B(u) = \sigma(x,u(x))$ (as in \eqref{eq:spm_intro}) essentially implies that $\sigma$ is linear in $u$. 

We conclude the discussion of available results on the ergodicity of stochastic porous medium equations by emphasising that there are no previous results on the ergodicity of \eqref{eq:spm_intro}
with purely non-linear multiplicative noise, as it appears in \eqref{eq:branch} .

%The dissipativity approach to ergodicity has also been applied in the context of semi-linear SPDEs, cf.~e.g.~\cite{DPZ14} and the references therein.

The well-posedness of entropy solutions to \eqref{eq:spm_intro} on the torus has been recently shown by Dareiotis, Gerencsér, Gess in \cite{DGG19}. Already in the context of the well-posedness of solutions it has been
realised in \cite{DGG19} that $L_{x}^{1}$ appears to be better suited than $H^{-1}_x$, which was used in many previous works, see for example \cite{LR15} and the references therein, since the non-linear diffusion coefficients
behave nicely in $L_{x}^{1}$ while they are not expected to be even Lipschitz continuous in $H^{-1}_x$. The proof of well-posedness of entropy solutions to \eqref{eq:spm_intro} on bounded domains, given in the present work, 
builds upon the analysis from \cite{DGG19}. In contrast to the periodic case considered in \cite{DGG19}, the presence of the boundary introduces the need of a weighted $L^{1}$-norm in order to control boundary terms. 

\subsection{Organisation of the article} 

In Section \ref{s:form} we set up the right formulation to study the well-posedness of \eqref{eq:spm_intro}. In Section \ref{s:main_results} we present our main results. 

In Section \ref{s:well-posedness} we discuss the well-posedness of stochastic porous media equations on bounded domains. We begin with the existence, uniqueness and $L^1$-contraction of 
entropy solutions (see Theorem \ref{thm:uniq_exist}), stability with respect to the data (see Theorem \ref{thm:stability}) and continuity in $L^1_\omega L^1_{w;x}$ 
(see Theorem \ref{thm:L^1_cont}). These results hold for more general porous medium operators $\Delta A$ (see Assumption \ref{as:A}-\ref{as:A first}) and the
initial condition $\xi$ is assumed to be in $L^{m+1}_\omega L^{m+1}_x$. As a corollary of Theorems \ref{thm:uniq_exist} and \ref{thm:L^1_cont}  we present an extension result to initial conditions 
in $L^1_\omega L^1_{w;x}$ (see Proposition \ref{prop:def-of-v}). In Section \ref{s:ergod} we present a bound for the solutions in $L^{m+1}_\omega L^{m+1}_x$ which is
uniform in the initial condition and in time (see Proposition \ref{prop:L^m+1_bddness}). We then discuss the main contraction estimate (see Theorem \ref{thm:pol_contr}), the Markov property 
(see Proposition \ref{prop:Markov}) and mixing to a unique equilibrium with optimal rate (see Theorem \ref{thm:pol_mix}).

In Section \ref{s:basic_est} we prove some important $L^1$-estimates, in Section \ref{s:well-posedness_proof} we prove the results listed in Section \ref{s:well-posedness}
and in Section \ref{s:ergod_proof} we prove the results listed in Section \ref{s:ergod}. Many technical results and proofs can be found in the \hyperlink{appendix.0}{Appendix}.

\subsection{Notation}

For a set $S\subset \RR^d$ and $m\in \NN$, we denote by $C^m(S)$ the space of $m$-times differentiable functions on $S$ and by $C^m_c(S)$ the 
subset of $m$-times differentiable compactly supported functions on $S$. Given a variable $\mathfrak{s}\in S$, $p\in[1,\infty]$ and 
$m\in\RR$, we denote by $L^p_\mathfrak{s}$ and $W^{m,p}_\mathfrak{s}$ the usual $L^p$ and $W^{m,p}$ spaces of functions in
this variable. If $p=2$ we write $H^m_\mathfrak{s}$ instead of $W^{m,2}_\mathfrak{s}$. If in addition $m\in \NN$, we write $H^m_{0;\mathfrak{s}}$
for the closure of $C^m_c(S)$ in $H^m_\mathfrak{s}$. For a probability space $(\Omega, \mathcal{F}, \PP)$ and $p\in[1,\infty)$
we denote by $L^p_\omega$ the space of $p$-integrable random variables in $\omega \in\Omega$. If the probability space carries a filter $(\mathcal{F}_t)_{t\geq 0}$,
we denote by $L^p_{\omega,t}$ the space of predictable $p$-integrable random processes in $(\omega,t)\in \Omega \times [0,T]$, for every $T>0$. For spaces of functions
of several arguments we sometimes use mixed notation. For example, $L^p_\omega L^q_x$ stands for the space of $p$-integrable random variables taking values in $L^q_x$,
while $L^p_{\omega,t}L^q_x$ stands for the space of predictable $p$-integrable random processes taking values in $L^q_x$.    

We denote by $w$ the solution to the boundary value problem
\begin{align} \label{eq:w}
 \begin{cases}
  & \Delta w = -1 \\
  & w|_{\partial Q} = 0 
 \end{cases}
 .
\end{align}
It is well-known that $w\in H^1_{0;x}$ and $w>0$ in $Q$. We define $L^1_{w;x}$ to be the space of measurable functions $f:Q\to \RR$ such that 
\begin{equs}
 \|f\|_{L^1_{w;x}} := \int_Q |f(x)| w(x) \dd x <\infty. \label{eq:L^1_weighted}
\end{equs}

We define the set 
\begin{equs}
 \mathcal{S} := \{d,K,m,|Q|,T\} \label{eq:S}
\end{equs}
where $d$ denotes the dimension of the $x$-space, $K$ and $m$ are positive constants given by Assumptions \ref{as:A} and \ref{as:noise} 
below, $|Q|$ denotes the volume of $Q$ and $T>0$.

Throughout the article, $C$ denotes a strictly positive constant which depends on the structural set $\mathcal{S}$, unless 
otherwise stated, and might change from line to line. In the proofs we will frequently use the notation $a \le b$ by which 
we mean $a \leq C b $. The notation $a\le_q b$ (respectively $C_q$) means that the constant $C$ depends on $\mathcal{S}$ and
on $q$. We also write $a\vee b$ (respectively $a\wedge b$) to denote the maximum (respectively minimum) between $a$ and $b$.  

In the proofs we sometimes use the abbreviation $\int_x$, $\int_t$ instead of $\int_Q \dd x$, $\int_0^T \dd t$. An analogous notation is
used for multiple integrals. For example, $\int_{t,x}$ stands for $\int_0^T \int_Q \dd x \dd t$. 

\subsection*{Acknowledgements}

Financial support by the DFG through the CRC 1283 ``Taming uncertainty and profiting from randomness and low regularity in analysis, stochastics and their applications''
is acknowledged.

\section{Formulation} \label{s:form}

We consider a generalisation of \eqref{eq:spm_intro} in the form 
\begin{equation}\label{eq:spm}
 \begin{cases}
 & \partial_t u(t,x) = \Delta A(u(t,x)) + \sigma^k(x,u(t,x)) \, \dot{\beta}^k(t) \\
 & u(0) = \xi \\
 & u|_{\partial Q} = 0
 \end{cases}
\end{equation}
under Assumptions \ref{as:A} and \ref{as:noise} below on a smooth bounded domain $Q\subset \RR^d$. From now on we fix a filtered probability space $(\Omega,\mathcal{F},\{\mathcal{F}_t\}_{t\geq 0}, \PP)$
with a sequence $(\beta^k)_{k\geq 1}$ of independent Brownian motions.

For a locally integrable function $f:\RR\to \RR$ we define 
\begin{equation*}
 [f](r) : = \int_0^r f(\zeta) \dd \zeta, \, [f,c](r) : = \int_c^r f(\zeta) \dd \zeta.
\end{equation*}
We also let $\fra : = \sqrt{A'}$.

\begin{assumption}\label{as:A}
The following hold for some $K\geq 1$ and $m>1$. 
\begin{enumerate}[a.]
\item \label{as:A first}
The function $A:\bR\mapsto\bR$ is differentiable, strictly increasing and odd. The function $\fra$ is differentiable away from the origin, and satisfies the bounds
\begin{equ}\label{eq:as fra}
|\fra(0)|\leq K,
%\quad|\fra(r)-\fra(0)|\leq K|r|^{\frac{m-1}{2}},
\, |\fra'(r)|\leq K|r|^{\frac{m-3}{2}} \text{ if } r>0,
\end{equ}
as well as
\begin{equation}\label{eq:as Psi}
K\fra(r)\geq I_{|r|\geq 1},\,
  K|[\fra](r)-[\fra](\tr)|\geq\left\{
  \begin{array}{@{}ll@{}}
    |r-\tr|, & \text{if } |r|\vee|\tr|\geq 1 \\
    |r-\tr|^{\frac{m+1}{2}}, & \text{if } |r|\vee|\tr|< 1
  \end{array}\right..
\end{equation} 

\item \label{as:ic}
The initial condition $\xi$ is an $\mathcal{F}_0$-measurable $L^{m+1}_x$-valued random variable such that $\E \| \xi \|_{L^{m+1}_x}^{m+1}< \infty$.
\end{enumerate}
\end{assumption}

\begin{assumption} \label{as:noise}
The function $\sigma:Q\times\bR\mapsto\ell^2$ satisfies the following bounds. There exist $\kappa\in (0,\frac{1}{2}]$, $\bar \kappa\in (\frac{1}{m\wedge 2},1]$ and $K\geq1$ such that for every $r\in\bR$, $\tr\in[r-1,r+1]$ and $x,y\in Q$,
\begin{equs}
|\sigma(x,r)|_{\ell^2}\leq K(1+|r|),\,
|\sigma(x,r)-\sigma(y,\tr)|_{\ell^2}\leq K|r-\tr|^{\frac{1}{2}+\kappa}+ K (1+|r|) |x-y|^{\bar\kappa}.
\end{equs}
\end{assumption}
For $\sigma, \tsigma:Q\times\bR\mapsto\ell^2$ satisfying Assumption \ref{as:noise} we set 
\begin{equs}
d(\sigma,\tsigma):= \sup_{x\in Q, \, r\in \RR} \frac{|\sigma(x,r)-\tsigma(x,r)|_{\ell^2}^2}{(1+|r|)^{m+1}}.
\end{equs}

\begin{definition}\label{def:adm_fct} A pair of functions $(\eta,\phi)$ is called admissible if
\begin{enumerate}[i.]
 \item $\eta\in C^2(\RR)$, $\eta''\geq 0$ and $\supp \eta''$ is compact.
 \item $\phi\geq 0$, $\phi = \varphi \varrho$ where $\varphi\in C^\infty_c([0,T))$ and $\varrho\in C^\infty_c(Q)$ .
\end{enumerate}
\end{definition}

Similarly to \cite{DGG19} we give the following definition of entropy solutions for \eqref{eq:spm}.

\begin{definition} \label{def:entr_sol} A predictable stochastic process $u:\Omega\times [0,\infty) \to L^{m+1}_x$ is an entropy solution to \eqref{eq:spm} if
\begin{enumerate}[i.]
 \item \label{item:solution-in-spaces} $u\in L^{m+1}_{\omega,t}L^{m+1}_x$ and $A(u) \in L^2_{\omega,t} H^1_{0;x}$. 
 \item \label{item:solution-in-spaces-2} For every $f\in C(\RR)$ bounded we have $[\fra f](u)\in L^2_{\omega,t} H^1_x$ and $\partial_{x_i}[\fra f](u) = f(u) \partial_{x_i}[\fra](u)$.  
 \item \label{item:entropies} For every admissible pair of functions $(\eta,\phi)$ as in Definition \ref{def:adm_fct} we have that
 \begin{align*}
  - \int_0^T \int_Q \eta(u) \partial_t \phi \dd x \dd t & \leq \int_Q \eta(\xi) \phi(0) \dd x + \int_0^T \int_Q \left[\eta'\fra^2\right](u) \Delta \phi \dd x \dd t \\
  & \quad + \int_0^T \int_Q \left(\frac{1}{2} \phi \eta''(u) |\sigma(u)|_{\ell^2}^2 - \phi \eta''(u) |\nabla [\fra](u)|^2\right) \dd x \dd t \\
  & \quad + \int_0^T \int_Q \phi \eta'(u) \sigma^k(u) \dd x \dd \beta^k(t).
 \end{align*} 
\end{enumerate}
\end{definition}

We refer to \eqref{eq:spm} as $\mathcal{E}(A,\sigma,\xi)$. In the sequel we write $u(\cdot;\xi)$ to denote the solution of $\mathcal{E}(A,\sigma,\xi)$. If the value of 
the initial condition is clear from the context, we simply write $u$.  

\section{Main results} \label{s:main_results}

\subsection{Well-posedness} \label{s:well-posedness}

The next two theorems build upon the analysis from \cite{DGG19} and concern the existence, uniqueness and stability of entropy solutions to 
\eqref{eq:spm}. All the results in this subsection hold for fixed, but arbitrarily large, $T>0$.  

\begin{theorem} \label{thm:uniq_exist} Let Assumptions \ref{as:A} and \ref{as:noise} hold. Then, there exists a unique entropy solution 
$u$ of $\mathcal{E}(A,\sigma, \xi)$. Moreover, if $u(t;\txi)$ is an entropy solution of $\mathcal{E}(A,\sigma, \txi)$ the following contraction
estimate holds, 
\begin{equation}
 \supess_{t\in[0,T]} \EE \|u(t;\xi) - u(t;\txi)\|_{L^1_{w;x}} \leq \EE \|\xi - \txi\|_{L^1_{w;x}}. \label{eq:main_contraction}
\end{equation}
\end{theorem}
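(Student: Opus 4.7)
My plan is to establish the contraction estimate \eqref{eq:main_contraction} first; uniqueness then follows by taking $\txi=\xi$, while existence is obtained by a regularisation scheme whose approximants are Cauchy in $L^1_\omega L^1_{w;x}$ thanks to this very estimate. The core of the argument is therefore the contraction.

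\textbf{Doubling of variables.} To prove \eqref{eq:main_contraction} I would adapt to the bounded-domain setting the Kruzhkov-type doubling of variables developed in \cite{DGG19} for the torus. I would write the entropy inequality of item iii of Definition \ref{def:entr_sol} for $u=u(\cdot;\xi)$ at $(t,x)$ with entropy $r\mapsto \eta_\eps(r-\tu(s,y))$, and symmetrically for $\tu=u(\cdot;\txi)$ at $(s,y)$, pairing them against the test function
\begin{equs}
\phi_{\delta,\rho}(t,x,s,y)=\varphi(t)\,\rho_\delta(t-s)\,\psi_\delta(x-y)\,w(x)\,\chi_\rho(x),
\end{equs}
where $\rho_\delta,\psi_\delta$ are standard mollifiers, $\varphi\in C^\infty_c([0,T))$, $w$ is the weight from \eqref{eq:w}, and $\chi_\rho$ is a smooth cut-off of $Q$ away from $\partial Q$ (required so the spatial test factor is compactly supported, as item ii of Definition \ref{def:adm_fct} demands). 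Here $\eta_\eps\in C^2$ is a symmetric approximation of $|\cdot|$ with $\supp\eta_\eps''\subset[-\eps,\eps]$. After adding the two inequalities and sending $\delta\to 0$, $\rho\to 0$, and then $\eps\to 0$, I expect the usual Kruzhkov telescoping to deliver a Kato-type inequality for $\|u(t)-\tu(t)\|_{L^1_{w;x}}$.

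\textbf{Why the weight works.} Two ingredients should make the weighted $L^1$ estimate close. First, after the Kruzhkov telescoping, the diffusion term $[\eta_\eps'\fra^2](u-\tu)\Delta\phi$ becomes, upon integrating by parts twice in $x$ and invoking $\Delta w=-1$ together with the monotonicity of $A$, equal modulo lower-order pieces to $-\int \eta_\eps'(u-\tu)(A(u)-A(\tu))\dd x \leq 0$, which may be discarded. The boundary integrals produced by this integration by parts vanish thanks to $A(u),A(\tu)\in H^1_{0;x}$ and $w|_{\partial Q}=0$; only the commutators coming from the cut-off $\chi_\rho$ remain, and these are absorbed using the $H^1_0$-trace vanishing as $\rho\to 0$. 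Second, the It\^o correction $\tfrac12\eta_\eps''(u-\tu)|\sigma(u)-\sigma(\tu)|_{\ell^2}^2 w$ together with the cross quadratic variation from the two stochastic integrals has integrand, on $\{|u-\tu|\leq\eps\}$, bounded by $\eta_\eps''(r)|r|^{1+2\kp}$ by Assumption \ref{as:noise}. Normalising $\eta_\eps$ so that $\int \eta_\eps''(r)|r|^{1+2\kp}\dd r = O(\eps^{2\kp})$ then makes this contribution vanish as $\eps\to 0$, which is where the strict inequality $\kp>0$ is essential. Taking expectations (the remaining stochastic integrals have zero mean) and letting $\varphi$ approximate $\mathbf{1}_{[0,t]}$ yields \eqref{eq:main_contraction}.

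\textbf{Existence and principal obstacle.} For existence I would regularise by replacing $A$ with $A_\eps(r)=A(r)+\eps r$ and mollifying the noise coefficients in $x$ and $u$, producing a non-degenerate equation with a unique variational strong solution $u_\eps$ in the framework of \cite{LR15}, enjoying the $L^{m+1}_{\omega,t,x}$ energy bound. Applying the contraction to $u_\eps$ vs $u_{\eps'}$ shows that $(u_\eps)$ is Cauchy in $L^1_\omega L^\infty_t L^1_{w;x}$, and the entropy inequality passes to the limit by lower semicontinuity since $\eta''\geq 0$ and the dissipative entropy-defect term can only decrease in the limit. The principal obstacle throughout is the doubling argument on a bounded domain: controlling the cut-off commutators $[\chi_\rho,\Delta_x]$ that couple to $\nabla w$ and $\Delta w$ in a tubular neighbourhood of $\partial Q$ uniformly in $\eps,\delta$, and interlacing this with the stochastic Fubini needed to commute expectations with the mollification in $(t,x)$ and $(s,y)$, is the technical heart. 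This is precisely the step where the combination of the $H^1_{0;x}$-regularity built into item i of Definition \ref{def:entr_sol} and the structural identity $\Delta w = -1$ is indispensable.
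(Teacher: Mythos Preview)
Your outline captures the overall architecture correctly, but there is a genuine gap at the heart of the doubling argument that you gloss over in the sentence ``together with the cross quadratic variation from the two stochastic integrals''. The entropy inequality in Definition \ref{def:entr_sol} is stated for \emph{deterministic} pairs $(\eta,\phi)$. When you freeze $a=\tu(s,y)$, apply the inequality for $u(t,x)$, and then integrate in $(s,y)$, the stochastic integral coming from the $\tu$-inequality, namely
\[
\int_0^T\int_Q\Bigl(\int_0^T\int_Q \eta_\eps'(\tu(s,y)-a)\,\sigma^k(y,\tu(s,y))\,\phi_\delta\, \dd y\,\dd\beta^k(s)\Bigr)\Big|_{a=u(t,x)}\dd x\,\dd t,
\]
is \emph{not} a martingale after the substitution $a=u(t,x)$, because $u(t,x)$ is only $\cF_t$-measurable while the integrator runs over $s\in[0,T]$. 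Its expectation does not vanish, and there is no mechanism at the level of general entropy solutions that turns it into the cross term $-\eta_\eps''(u-\tu)\,\sigma(u)\!\cdot\!\sigma(\tu)$ you need to complete the square $|\sigma(u)-\sigma(\tu)|_{\ell^2}^2$. The paper isolates exactly this obstruction as the $(\star)$-property (Definition \ref{def:star_prop}), proves it for the vanishing-viscosity approximants via It\^o's formula (Lemma \ref{lem:strong entropy}), and shows it is stable under a.e.\ limits (Lemma \ref{lem:star_lim}). The contraction (Corollary \ref{cor:contraction}) is then proved only between an arbitrary entropy solution and one enjoying $(\star)$.

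This forces a different logical order from yours: one must first \emph{construct} a solution carrying the $(\star)$-property (via the approximation), and only then deduce the contraction against an arbitrary second entropy solution; uniqueness and \eqref{eq:main_contraction} follow. Your plan ``contraction $\Rightarrow$ uniqueness $\Rightarrow$ existence'' cannot be executed because the contraction step already needs one solution to be the constructed one. A smaller point in the same vein: your approximants $u_\eps$ and $u_{\eps'}$ solve equations with different nonlinearities $A_\eps\neq A_{\eps'}$, so the pure contraction does not compare them; one needs a genuine stability estimate with error terms in $|\fra_\eps-\fra_{\eps'}|$ and $d(\sigma_\eps,\sigma_{\eps'})$ (this is Lemma \ref{lem:final_psi}-\ref{it:final_psi_1} in the paper), which is how the Cauchy property is actually obtained.
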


\begin{theorem} \label{thm:stability}
Let $(A_n)_{n\geq 1}$ and $(\xi_n)_{n\geq 1}$ satisfy Assumption \ref{as:A} and let $(\sigma_n)_{n\geq 1}$ satisfy Assumption \ref{as:noise}, uniformly in $n$.
Assume furthermore that for $n \to \infty$, $A_n\to A$ uniformly on compact sets, $\xi_n\to\xi$ in $L^{m+1}_\omega L^{m+1}_x$, and $d(\sigma_n,\sigma) \to 0$, for some $A$
and $\xi$ satisfying Assumption \ref{as:A} and $\sigma$ satisfying Assumption \ref{as:noise}. Let $u_n, u$ be the  unique entropy 
solutions of $\cE(A_n,\sigma_n,\xi_n)$, $\cE(A,\sigma,\xi)$. Then, as $n \to \infty$, $u_n\to u$ in $L^1_{\omega,t}L^1_{w;x}$.
\end{theorem}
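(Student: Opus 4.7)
The plan is to establish a \emph{quantitative} version of the contraction estimate \eqref{eq:main_contraction}, comparing $u_n$ and $u$ as entropy solutions of the \emph{different} equations $\cE(A_n,\sigma_n,\xi_n)$ and $\cE(A,\sigma,\xi)$. Concretely, I aim to produce an inequality of the form
\begin{equation*}
\supess_{t\in[0,T]}\E\|u_n(t)-u(t)\|_{L^1_{w;x}}\leq \E\|\xi_n-\xi\|_{L^1_{w;x}}+R_n^{A}+R_n^{\sigma},
\end{equation*}
where $R_n^{A}$, $R_n^{\sigma}$ encode the discrepancies $A_n\neq A$ and $\sigma_n\neq\sigma$ respectively and both vanish as $n\to\infty$. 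Since $\xi_n\to\xi$ in $L^{m+1}_\omega L^{m+1}_x$ and $w$ is bounded on $Q$, $\E\|\xi_n-\xi\|_{L^1_{w;x}}\to 0$, and a subsequent integration in $t\in[0,T]$ upgrades the $\supess$-convergence to convergence in $L^1_{\omega,t}L^1_{w;x}$.

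As a preliminary step I record the uniform-in-$n$ a priori bounds delivered by the existence part of Theorem \ref{thm:uniq_exist}: since Assumptions \ref{as:A}--\ref{as:noise} hold uniformly in $n$ and $\sup_n\E\|\xi_n\|_{L^{m+1}_x}^{m+1}<\infty$ (from $\xi_n\to\xi$ in $L^{m+1}_\omega L^{m+1}_x$), one has
\begin{equation*}
\sup_n\E\int_0^T\Big(\|u_n\|_{L^{m+1}_x}^{m+1}+\|A_n(u_n)\|_{H^1_{0;x}}^{2}\Big)\dd t<\infty,
\end{equation*}
together with an analogous $L^2_{\omega,t,x}$-bound for $\nabla[\fra_n](u_n)$ (writing $\fra_n:=\sqrt{A_n'}$). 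These ensure that every integrand appearing below is uniformly controlled.

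I then run the doubling-of-variables argument underlying the proof of Theorem \ref{thm:uniq_exist} (see also \cite{DGG19}), but with the two entropy formulations in $u_n(t,x)$ and $u(s,y)$. Testing with a joint test function built from the weight $w$ and a standard mollifier $\rho_\eps(t-s,x-y)$, using a smooth approximation $\eta_\delta$ of $|\cdot|$, taking expectations, and passing $\eps\to 0$ then $\delta\to 0$ produces, beyond the terms leading to the $L^1_{w;x}$-contraction, a deterministic residue of the form
\begin{equation*}
R_n^{A}\approx \E\int_0^T\!\!\int_Q\big|[\fra_n^{2}\,\sgn](u_n)-[\fra^{2}\,\sgn](u)\big|\,|\Delta w|\,\dd x\dd t,
\end{equation*}
coming from the mismatch of the second-order operators, together with an It\^o-correction residue controlled by
\begin{equation*}
\E\int_0^T\!\!\int_Q|\sigma_n(x,u_n)-\sigma(x,u)|_{\ell^2}^{2}\,\eta_\delta''(u_n-u)\,w\,\dd x\dd t.
\end{equation*}
Splitting via $|\sigma_n(u_n)-\sigma(u)|^{2}\lesssim|\sigma_n(u_n)-\sigma(u_n)|^{2}+|\sigma(u_n)-\sigma(u)|^{2}$, the second piece is handled exactly as in the proof of Theorem \ref{thm:uniq_exist} (using the H\"older bound with exponent $\tfrac12+\kappa$ from Assumption \ref{as:noise}, together with the vanishing of $\int\eta_\delta''(r)|r|^{1+2\kappa}\dd r$ as $\delta\to 0$ and a Gronwall absorption), while the first piece is bounded by $d(\sigma_n,\sigma)\,\E\int_{t,x}w\,(1+|u_n|)^{m+1}$, which we take as $R_n^{\sigma}$ and which vanishes by the uniform $L^{m+1}$-bound and the hypothesis $d(\sigma_n,\sigma)\to 0$.

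The main obstacle is controlling $R_n^{A}$, since $u_n,u$ are not in $L^\infty$ while $\fra_n^{2}$ has polynomial growth of order $m-1$. I split $R_n^{A}$ according to $\{|u_n|\vee|u|\leq M\}$ and $\{|u_n|\vee|u|>M\}$: on the first set, uniform-on-compacts convergence $A_n\to A$ together with the structural bounds \eqref{eq:as fra}--\eqref{eq:as Psi} from Assumption \ref{as:A}.\ref{as:A first} yields $\sup_{|r|\leq M}|\fra_n^{2}(r)-\fra^{2}(r)|\to 0$ and thus smallness of the contribution; on the second set, the uniform $L^{m+1}$-bound and the $m$-growth of $\fra^{2}$ give a contribution of order $M^{-\gamma}$ for some $\gamma>0$. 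A diagonal choice $M=M(n)\to\infty$ slowly makes both parts tend to zero, so $R_n^{A}\to 0$. With both residues controlled, Gronwall's lemma applied to the $\eps$- and $\delta$-limit of the contraction inequality yields the desired quantitative stability estimate, and the theorem follows as outlined.
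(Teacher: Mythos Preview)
Your outline misses a structural requirement of the doubling-of-variables machinery in this paper: Proposition \ref{prop:basic_est} and Lemma \ref{lem:final_psi} apply only when one of the two solutions has the $(\star)$-property (Definition \ref{def:star_prop}). This property is \emph{not} established for an arbitrary entropy solution with $\xi\in L^{m+1}_\omega L^{m+1}_x$; it is obtained in Step 1 of the proof of Theorem \ref{thm:uniq_exist} under the extra hypothesis $\E\|\xi\|_{L^2_x}^4<\infty$, and is not claimed for the general $u$ produced in Step 2. The paper therefore first truncates $\xi_n$ to a bounded $\bar\xi_n$, so that the corresponding $\bar u_n$ (constructed via Step 1) does carry the $(\star)$-property, compares $\bar u_n$ to $u$ via Lemma \ref{lem:final_psi}-\ref{it:final_psi_1}, and controls $\|u_n-\bar u_n\|$ separately by \eqref{eq:contraction}. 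Your proposal runs doubling directly between $u_n$ and $u$ without verifying the $(\star)$-property for either, so the key lemmata are not available to you.

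Second, your description of the residual terms and the passage to the limit is not compatible with the actual structure of $\mathcal{G}_\alpha$ in \eqref{eq:def-G}. The $\sigma$-discrepancy enters as $\delta^{-1}d(\sigma_n,\sigma)$, not $d(\sigma_n,\sigma)$, and $\mathcal{G}_\alpha$ contains $\delta\eps^{-1}$ and $\delta^{2\alpha}\eps^{-2}$, so taking $\eps\to 0$ with $\delta$ fixed makes these diverge; one must couple $\delta=\eps^{2\nu}$ as in the proof of Lemma \ref{lem:final_psi}-\ref{it:final_psi_2} and then choose $\eps$ small depending on $n$. Likewise the $A$-discrepancy is not the integral you write but is encoded through the parameter $\lambda$ and the cutoff $R_\lambda$: one picks $\lambda=\lambda(n)\to 0$ so that, by $A_n\to A$ on compacts, $R_\lambda\to\infty$, and then the tails $\E\|\mathbf{1}_{|u|\geq R_\lambda}(1+|u|)\|_{L^m_{t,x}}^m$ vanish. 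Finally, there is no Gronwall step here --- after replacing $\psi$ by $w$ the second-order term has a sign and contributes $-\int|A(u)-A(\tu)|$, so the contraction is direct. The correct proof is the layered limit $l\to\infty$ (for $\psi_l\to w$), then $\eps\to 0$ with $\delta=\eps^{2\nu}$, then $n\to\infty$, as carried out in Section \ref{s:proof_uniq_exist_stab}.
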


In the next theorem we prove the continuity of entropy solutions in $L^1_\omega L^1_{w;x}$.

\begin{theorem} \label{thm:L^1_cont} Let Assumptions \ref{as:A} and \ref{as:noise} hold. If $u$ is an entropy solution of $\mathcal{E}(A,\sigma,\xi)$, then $u\in C([0,T];L^1_\omega L^1_{w;x})$.
\end{theorem}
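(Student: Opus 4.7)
The plan is to prove $L^1_\omega L^1_{w;x}$-continuity via a density argument that exploits the $L^1$-contraction of Theorem \ref{thm:uniq_exist}. I would first establish continuity for a dense subclass of initial data (namely bounded data) and then extend to general $\xi \in L^{m+1}_\omega L^{m+1}_x$ using the contraction estimate, which makes the extension step particularly clean.

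As preparation I would first extract the weak form from Definition \ref{def:entr_sol}.\ref{item:entropies}. The pair $(\eta, \phi)$ with $\eta(r) = \pm r$ and $\phi(t,x) = \varphi(t)\varrho(x)$, $\varphi \in C^\infty_c([0,T))$ and $\varrho \in C^\infty_c(Q)$, is admissible since $\eta'' \equiv 0$ has empty (hence compact) support. Using both signs in the entropy inequality produces the equality
\begin{equs}
-\int_0^T\!\!\int_Q u\,\varrho\,\partial_t\varphi \dd x \dd t = \int_Q \xi \varrho \varphi(0) \dd x + \int_0^T\!\varphi \int_Q A(u) \Delta \varrho \dd x \dd t + \int_0^T\!\varphi \int_Q \varrho \sigma^k(u) \dd x \dd \beta^k(t).
\end{equs}
Taking $\varphi$ approximating $\mathbf{1}_{[0,t]}$ and noting $\EE\int_0^T\int_Q |A(u)|\dd x \dd t < \infty$ (from $u \in L^{m+1}_{\omega,t}L^{m+1}_x$ and the growth of $A$), I would conclude that for each $\varrho \in C^\infty_c(Q)$ the process $t \mapsto \int_Q u(t)\varrho \dd x$ has a version that is a continuous semimartingale; in particular, the map is continuous in $L^1_\omega$.

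For the first (bounded-data) step, take $\xi_0 \in L^\infty_{\omega,x}$. One obtains $u \in L^\infty(\Omega\times[0,T]\times Q)$ via the entropy inequality applied with truncation entropies $\eta_M(r) = (|r|-M)^+_{\text{smoothed}}$. With such a uniform bound, I would combine the weak continuity above with density of $C^\infty_c(Q)$ in $L^p_x$ and uniform integrability: the drift $\int_s^t\int_Q A(u)\Delta\varrho \dd x \dd\tau$ is bounded in $L^\infty_\omega$ in terms of $|t-s|$, and the martingale part has continuous paths in $L^2_\omega$ with quadratic variation controlled by $\|\sigma(u)\|_{\ell^2}^2 \le (1+\|u\|_\infty)^2$; together with the boundedness of $u$ and the bounded weight $w$, this is enough to upgrade to strong $L^1_\omega L^1_{w;x}$-continuity (alternatively, an Itô formula applied to $\eta_\varepsilon(u)$ for smooth convex $\eta_\varepsilon$ approximating $|r|$, tested against $\phi(t,x) = \varphi(t) w_n(x)$ with $w_n \in C^\infty_c(Q)$ approximating $w$, would give the same conclusion quantitatively).

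For the extension step, given $\xi \in L^{m+1}_\omega L^{m+1}_x$, set $\xi_n := (\xi \wedge n)\vee(-n)$. Then $\xi_n \to \xi$ in $L^{m+1}_\omega L^{m+1}_x$ by dominated convergence, and since $w$ is bounded on $\bar Q$ and $|Q|<\infty$ we have $L^{m+1}_x \hookrightarrow L^1_{w;x}$, so also $\xi_n \to \xi$ in $L^1_\omega L^1_{w;x}$. Writing $u_n := u(\cdot;\xi_n)$, the previous step yields $u_n \in C([0,T];L^1_\omega L^1_{w;x})$. Consequently $t \mapsto \EE\|u_n(t) - u_m(t)\|_{L^1_{w;x}}$ is continuous, so its essential supremum equals its supremum, and Theorem \ref{thm:uniq_exist} gives
\begin{equs}
\sup_{t\in[0,T]} \EE\|u_n(t) - u_m(t)\|_{L^1_{w;x}} \leq \EE\|\xi_n - \xi_m\|_{L^1_{w;x}} \xrightarrow[n,m\to\infty]{} 0.
\end{equs}
Hence $(u_n)$ is Cauchy in $C([0,T]; L^1_\omega L^1_{w;x})$ with some limit $v$. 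The same contraction applied between $u$ and $u_n$ yields $\supess_{t}\EE\|u(t)-u_n(t)\|_{L^1_{w;x}} \to 0$, forcing $v(t) = u(t)$ in $L^1_\omega L^1_{w;x}$ for a.e.\ $t$. Thus $v$ is the desired continuous modification.

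The main obstacle is the bounded-data step: upgrading the weak continuity coming from the entropy formulation to strong $L^1_\omega L^1_{w;x}$-continuity. The weak formulation directly yields only continuity tested against $\varrho \in C^\infty_c(Q)$, and bridging to the strong norm requires a careful combination of the $L^\infty$-bound on $u$ with either a regularised Itô formula for entropies $\eta_\varepsilon \approx |r|$ (truncated so that $\supp\eta_\varepsilon''$ remains compact) or an Aubin–Lions/uniform-integrability argument; this is where the weighted topology $L^1_{w;x}$, with its favourable interaction with the Dirichlet problem $\Delta w = -1$, plays its essential role.
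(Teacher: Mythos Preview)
Your extension step (Step 3) is sound and is essentially the content of Proposition \ref{prop:def-of-v}. The genuine problem is Step 2, and it is not merely a technicality.

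The claim that for bounded initial data $\xi_0 \in L^\infty_{\omega,x}$ the entropy solution satisfies $u \in L^\infty(\Omega\times[0,T]\times Q)$ is false in general for the stochastic equation. The truncation-entropy argument you sketch works for the deterministic porous medium equation, but in the entropy inequality of Definition \ref{def:entr_sol} the It\^o correction term $\tfrac{1}{2}\phi\,\eta_M''(u)|\sigma(u)|_{\ell^2}^2$ has the \emph{wrong sign}: it injects mass at the level set $\{|u|\approx M\}$ rather than removing it. With $|\sigma(x,r)|_{\ell^2}\le K(1+|r|)$ this term does not vanish, and there is no mechanism preventing excursions of $u$ above any fixed level. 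More conceptually, even for \emph{linear} SPDEs with additive or multiplicative Brownian noise one does not have $L^\infty_\omega$ bounds, since Brownian increments are unbounded. Every subsequent estimate in your Step 2 (the $L^\infty_\omega$ control on the drift, the bound $|\sigma(u)|_{\ell^2}^2\le(1+\|u\|_\infty)^2$ on the quadratic variation) rests on this claim and therefore collapses with it. Even if one only had $u\in L^\infty_{t,x}$ almost surely (with no uniformity in $\omega$), weak continuity in the sense of testing against $\varrho\in C^\infty_c(Q)$ together with boundedness does \emph{not} by itself yield strong $L^1_x$-continuity: one still needs some form of spatial tightness or compactness, which you have not supplied.

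The paper avoids this entirely by a different reduction. Instead of bounded initial data, it uses the viscous approximations $u_n$ from Proposition \ref{prop:viscoous-well-posedenss}, which are genuine $L^2_x$-valued continuous processes (Remark \ref{rem:L^2_cont}); in particular It\^o's formula is available for them. The core of the proof is then to show that the family $(t\mapsto u_n(t))_{n\ge 1}$ is \emph{uniformly equicontinuous} with values in $L^1_\omega L^1_{w;x}$. This is done by combining the doubling-of-variables estimate of Lemma \ref{lem:almost_final} (applied with $\tu=u_n$ itself, to control $\EE\int|u_n(t',x)-u_n(t',y)|\psi(x)\varrho_\eps(x-y)$ in terms of the analogous quantity for $\xi$) with a direct It\^o-formula computation for $\eta_\delta(u_n(t,x)-u_n(t',y))$ over $[t',t]$. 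Since $u_n\to u$ in $L^1_{\omega,t}L^1_{w;x}$ by Theorem \ref{thm:stability}, the uniform equicontinuity passes to $u$ on a set of full measure and yields the continuous modification. The key point is that the spatial regularity needed to pass from weak to strong continuity is supplied by the doubling-of-variables machinery, uniformly in $n$, rather than by an $L^\infty$ bound.
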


As a corollary of the contraction estimate \eqref{eq:main_contraction} and the continuity in $L^1_\omega L^1_{w;x}$ we have the following extension result.  

\begin{proposition} \label{prop:def-of-v} Let Assumptions \ref{as:A} and \ref{as:noise} hold. The mapping 
\begin{equs}
 L^{m+1}_\omega L^{m+1}_x\ni \xi \mapsto u(\cdot;\xi)\in  C([0,T];L^1_\omega L^1_{w;x})
\end{equs}
extends uniquely to a continuous map from $L^1_\omega L^1_{w;x}$ to $C([0,T];L^1_\omega L^1_{w;x})$. Furthermore, the following contraction estimate 
holds for every $\xi, \txi\in L^1_\omega L^1_{w;x}$,
\begin{equs} 
 \sup_{t\in [0,T]} \EE\|u(t;\xi) - u(t;\txi)\|_{L^1_{w;x}} & \leq \EE\|\xi - \txi\|_{L^1_{w;x}}. \label{eq:v_contr}
\end{equs}
\end{proposition}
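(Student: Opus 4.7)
The plan is to view \eqref{eq:main_contraction} together with Theorem \ref{thm:L^1_cont} as saying that $\xi\mapsto u(\cdot;\xi)$ is $1$-Lipschitz from the subspace $L^{m+1}_\omega L^{m+1}_x \subset L^1_\omega L^1_{w;x}$, equipped with the $L^1_\omega L^1_{w;x}$ norm, into the Banach space $C([0,T];L^1_\omega L^1_{w;x})$, and then extend it by density and completeness.

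As a first step, I would upgrade the $\supess$ in \eqref{eq:main_contraction} to a genuine $\sup$ for continuous representatives. Since Theorem \ref{thm:L^1_cont} gives $u(\cdot;\xi),u(\cdot;\txi)\in C([0,T];L^1_\omega L^1_{w;x})$, the map $t\mapsto \E\|u(t;\xi)-u(t;\txi)\|_{L^1_{w;x}}$ is continuous, and so its essential supremum over $[0,T]$ coincides with its supremum. Hence, for all $\xi,\txi\in L^{m+1}_\omega L^{m+1}_x$,
\begin{equs}
\sup_{t\in[0,T]}\E\|u(t;\xi)-u(t;\txi)\|_{L^1_{w;x}}\leq \E\|\xi-\txi\|_{L^1_{w;x}},
\end{equs}
which is precisely the desired Lipschitz estimate for the mapping to be extended. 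Note that $L^{m+1}_\omega L^{m+1}_x \subset L^1_\omega L^1_{w;x}$ since $w$ is bounded on $Q$ and $|Q|<\infty$, so the embedding follows from H\"older's inequality.

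The second step is to prove that $L^{m+1}_\omega L^{m+1}_x$ is dense in $L^1_\omega L^1_{w;x}$. Given $\xi\in L^1_\omega L^1_{w;x}$, define the truncations $\xi_n:=(\xi\wedge n)\vee(-n)$. Each $\xi_n$ is $\mathcal{F}_0$-measurable and satisfies $\|\xi_n\|_{L^{m+1}_x}^{m+1}\leq n^{m+1}|Q|$ deterministically, so $\xi_n\in L^{m+1}_\omega L^{m+1}_x$. On the other hand, $|\xi_n-\xi|w\to 0$ pointwise on $\Omega\times Q$ and is dominated by $2|\xi|w$, which is integrable with respect to $\dd\PP\otimes\dd x$ by the assumption $\xi\in L^1_\omega L^1_{w;x}$. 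Dominated convergence yields $\E\|\xi_n-\xi\|_{L^1_{w;x}}\to 0$, proving density.

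The final step is the extension itself, which is now the standard argument for extending a uniformly continuous map defined on a dense subset of a metric space to one valued in a complete metric space. Given $\xi\in L^1_\omega L^1_{w;x}$, choose any sequence $(\xi_n)\subset L^{m+1}_\omega L^{m+1}_x$ converging to $\xi$ in $L^1_\omega L^1_{w;x}$ (provided by the previous step). By the Lipschitz bound applied to differences $\xi_n-\xi_k$, the sequence $(u(\cdot;\xi_n))$ is Cauchy in the Banach space $C([0,T];L^1_\omega L^1_{w;x})$, and so converges to some element, which we define to be $u(\cdot;\xi)$. The limit is independent of the choice of approximating sequence, again by applying the Lipschitz estimate to the difference of two such sequences, and inequality \eqref{eq:v_contr} follows by passing to the limit in the Lipschitz estimate with $\xi_n\to\xi$, $\txi_n\to\txi$. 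Uniqueness of the extension is automatic: any continuous extension must agree with the defined map on the dense subspace $L^{m+1}_\omega L^{m+1}_x$, hence everywhere by continuity.

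There is no genuine obstacle here; the substantive work has already been carried out in Theorems \ref{thm:uniq_exist} and \ref{thm:L^1_cont}, and the proposition amounts to the combination of the contraction estimate, the continuity in time, and an elementary truncation argument for density.
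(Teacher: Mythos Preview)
Your proposal is correct and follows essentially the same approach as the paper's proof: upgrade the $\supess$ in \eqref{eq:main_contraction} to a $\sup$ via Theorem \ref{thm:L^1_cont}, then extend the $1$-Lipschitz map by density and completeness. The only difference is that you explicitly supply the truncation argument for density of $L^{m+1}_\omega L^{m+1}_x$ in $L^1_\omega L^1_{w;x}$, which the paper leaves implicit.
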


\subsection{Ergodicity} \label{s:ergod}

In this section we assume that $A(r)= |r|^{m-1}r$, for $m\in(1,\infty)$. From now on, it is more convenient to work with initial conditions $\xi\in L^1_{w;x}$. In the following, we define $u(\cdot;\xi)$
for $\xi\in L^1_{w;x}$ by continuity using Proposition \ref{prop:def-of-v}. By Remark \ref{rem:L^1_ext} and Proposition \ref{prop:L^m+1_bddness} below, one can prove that for every $\xi\in L^1_\omega L^1_{w;x}$ the extension 
$u(\cdot;\xi)$ is an entropy solution of \eqref{eq:spm} on $(0,T]$ for every $T>0$, in the sense that it satisfies Definition \ref{def:entr_sol} with $[0,T]$ replaced by $[s,T]$, for 
every $s\in(0,T]$.  

\begin{remark} \label{rem:L^1_ext} Although Proposition \ref{prop:def-of-v} allows to extend $u(\cdot;\xi)$ for $\xi\in L^1_\omega L^1_{w;x}$ by continuity, it is unclear 
whether the extension solves \eqref{eq:spm} in general. However, it is easy to see that if there exists a sequence $\xi_n\to\xi$ such that for every $s>0$
\begin{equs}
 \sup_{n\geq 1}\sup_{t\in[s,T]} \EE\|u(t;\xi_n)\|_{L^{m+1}_x} <\infty, \label{eq:L^m+1_bound_xi}
\end{equs}
then $u(\cdot;\xi)$ is an entropy solution of \eqref{eq:spm} on $(0,T]$. Indeed, since $u(\cdot;\xi_n) \to u(\cdot;\xi)$ in $C([0,T];L^1_\omega L^1_{w;x})$, 
we know that for every $s\in(0,T]$, passing to a subsequence, $u(s,x;\xi_n) \to u(s,x;\xi)$ for almost every $(\omega,x)$. Hence, by Fatou's lemma we have 
\begin{equs}
 \EE\|u(s;\xi)\|_{L^{m+1}_x} \leq \liminf_{n\to \infty} \EE\|u(s;\xi_n)\|_{L^{m+1}_x} \leq \sup_{n\geq 1} \EE\|u(s;\xi_n)\|_{L^{m+1}_x}
\end{equs}
and the latter quantity is uniformly bounded in $n$ due to \eqref{eq:L^m+1_bound_xi}. Hence, by Theorem \ref{thm:uniq_exist} there exists a unique solution
of \eqref{eq:spm} on $[s,T]$ with initial condition $u(s;\xi)$, which we denote by $u_s(\cdot;u(s;\xi))$. By Corollary \ref{cor:restart} we know that $u(\cdot;\xi_n)$
coincides with $u_s(\cdot;u(s;\xi_n))$ (the unique entropy solution of \eqref{eq:spm} with initial condition $u(s;\xi_n)$) on $[s,T]$. Since $u(s;\xi_n) \to u(s;\xi)$ in $L^1_\omega L^1_{w;x}$,
using \eqref{eq:v_contr} we see that $u_s(\cdot;u(s;\xi_n)) \to u_s(\cdot;u(s;\xi))$ in $C([s,T];L^1_\omega L^1_{w;x})$. But $u(t;\xi_n) = u_s(t;u(s;\xi_n))$ for $t\in[s,T]$, 
which in turn implies that $u(t;\xi) = u_s(t;u(s;\xi))$. Since $s\in(0,T]$ is arbitrary this proves that $u(\cdot;\xi)$ is an entropy solution on $(0,T]$.  
\end{remark}

The next proposition states that entropy solutions satisfy the so-called ``coming down from infinity'' property, which implies \eqref{eq:L^m+1_bound_xi}.

\begin{proposition} \label{prop:L^m+1_bddness} Let Assumption \ref{as:noise} hold. Then, 
\begin{equs}
 \sup_{\xi\in L^{m+1}_{\omega, x}}\sup_{t\in[0,\infty)} (t\wedge1)^{\frac{m+1}{m-1}} \EE\|u(t;\xi)\|_{L^{m+1}_x}^{m+1} < \infty. \label{eq:coming_down_infty}
\end{equs}
\end{proposition}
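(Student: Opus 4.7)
The plan is to derive, for $y(t):=\EE\|u(t;\xi)\|_{L^{m+1}_x}^{m+1}$, a differential inequality of the form
\begin{equs}
y'(t) \;\leq\; -c_1\, y(t)^{\alpha} + c_2\bigl(1+y(t)\bigr), \qquad \alpha := \frac{2m}{m+1}>1,
\end{equs}
and then to exploit the super-linear dissipation ($\alpha>1$) by an ODE comparison that produces the ``coming down from infinity'' bound $y(t)\le t^{-1/(\alpha-1)}=t^{-(m+1)/(m-1)}$. To obtain this inequality I would first apply It\^o's formula to $u\mapsto\tfrac{1}{m+1}\|u\|_{L^{m+1}_x}^{m+1}$; equivalently, test the entropy inequality of Definition \ref{def:entr_sol} against an admissible pair $(\eta_R,\varphi\varrho_\varepsilon)$, where $\eta_R$ is a smooth truncation of $r\mapsto|r|^{m+1}/(m+1)$ with $\eta_R''$ compactly supported, $\varrho_\varepsilon\nearrow\bone_Q$, and $\varphi$ approximates the indicator of $[0,t]$. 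Passing to the limits $R\to\infty$, $\varepsilon\to 0$ and taking expectation yields the energy balance
\begin{equs}
\frac{d}{dt}\EE\|u\|_{L^{m+1}_x}^{m+1} \;=\; -(m+1)\,\EE\|\nabla A(u)\|_{L^2_x}^2 + \frac{m(m+1)}{2}\,\EE\!\int_Q |u|^{m-1}|\sigma(u)|_{\ell^2}^2\,\dd x.
\end{equs}
The identification of the dissipation uses the identity $\eta''(r)A'(r)|\nabla u|^2 = |\nabla A(u)|^2$ for $A(r)=|r|^{m-1}r$ after an integration by parts justified by $A(u)\in H^1_{0;x}$ (item \ref{item:solution-in-spaces} of Definition \ref{def:entr_sol}); the It\^o correction is controlled by $c(1+y(t))$ via Assumption \ref{as:noise} together with Young's inequality applied to $|u|^{m-1}|\sigma(u)|_{\ell^2}^2 \le |u|^{m-1}+|u|^{m+1}$.

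The crucial step is the lower bound on the dissipation. Since $A(u)\in H^1_{0;x}$, Poincar\'e's inequality gives $\|\nabla A(u)\|_{L^2_x}^2 \geq c_P\|A(u)\|_{L^2_x}^2 = c_P\|u\|_{L^{2m}_x}^{2m}$. As $Q$ is bounded and $2m\geq m+1$, H\"older's inequality yields $\|u\|_{L^{2m}_x}^{2m}\geq c\|u\|_{L^{m+1}_x}^{2m}$ pointwise in $\omega$. Finally, Jensen's inequality on $\Omega$ applied to the convex map $z\mapsto z^{\alpha}$ gives $\EE\|u\|_{L^{m+1}_x}^{2m} = \EE\bigl[(\|u\|_{L^{m+1}_x}^{m+1})^{\alpha}\bigr] \geq y(t)^{\alpha}$. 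Plugging these estimates into the energy balance produces the ODE above.

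Since $\alpha>1$ there exists $M$, depending only on the structural constants in $\mathcal{S}$, such that $-c_1 y^{\alpha}+c_2(1+y)\leq -\tfrac{c_1}{2} y^{\alpha}$ whenever $y\geq M$. Let $\tau:=\inf\{t>0:y(t)\leq M\}$. On $[0,\tau]$, comparison with the ODE $z'=-\tfrac{c_1}{2}z^{\alpha}$, whose maximal solution starting from $+\infty$ is $z(t) = C_0\, t^{-1/(\alpha-1)}$, and the identity $1/(\alpha-1) = (m+1)/(m-1)$, yield $y(t)\leq C_0\, t^{-(m+1)/(m-1)}$ uniformly in the initial condition $\xi$; for $t>\tau$ a standard barrier argument at the level $y=M$ shows $y$ cannot re-enter $\{y>M\}$. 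Combining both regimes and multiplying by $(t\wedge 1)^{(m+1)/(m-1)}$ gives \eqref{eq:coming_down_infty}.

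The main technical obstacle is the rigorous justification of the energy balance in the first step. The entropy $\eta(r)=|r|^{m+1}/(m+1)$ is not itself admissible since $\eta''$ is not compactly supported, and the cutoff $\varrho_\varepsilon$ has to be removed up to $\partial Q$, which requires controlling the contributions from $\Delta\varrho_\varepsilon$ that become singular at the boundary. This is handled by noting that $[\eta'\fra^2](u)=\tfrac{1}{2}A(u)^2$ together with $A(u)\in H^1_{0;x}$ permits an integration by parts that absorbs the offending boundary contribution, and by observing that no higher moments than $L^{m+1}_x$ are needed to pass $R\to\infty$ since $\eta_R''$ is dominated by $m|r|^{m-1}$.
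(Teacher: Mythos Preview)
Your overall strategy is the same as the paper's: derive the differential inequality
\[
y'(t) \leq -c_1\, y(t)^{\frac{2m}{m+1}} + c_2\bigl(1+y(t)\bigr), \qquad y(t)=\EE\|u(t)\|_{L^{m+1}_x}^{m+1},
\]
via an energy computation using Poincar\'e plus Jensen, and then conclude by ODE comparison. The difference lies entirely in how the energy balance is justified, and here your proposal has a genuine gap.

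You plan to obtain the balance directly from the entropy inequality with the truncated pair $(\eta_R,\varphi\varrho_\eps)$ and then let $R\to\infty$, $\varrho_\eps\to\bone_Q$. The obstacle you flag is real, but your proposed fix does not close it. The identity $[\eta'\fra^2](u)=\tfrac12 A(u)^2$ holds for the \emph{untruncated} $\eta(r)=|r|^{m+1}/(m+1)$; for the admissible truncation $\eta_R$ (with $\eta_R''$ compactly supported) it fails outside $[-R,R]$, where $[\eta_R'\fra^2](r)$ behaves like $R^m|r|^m$ rather than $\tfrac12|r|^{2m}$. Hence to pass $R\to\infty$ in the term $\int_Q[\eta_R'\fra^2](u)\Delta\varrho_\eps$ you would need $[\eta'\fra^2](u)=\tfrac12|u|^{2m}\in L^1_x$, i.e.\ $u\in L^{2m}_x$, which is not available a priori from $u\in L^{m+1}_{\omega,t,x}$. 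Reversing the order of limits does not help either, since for fixed $R$ the function $[\eta_R'\fra^2](u)$ does not belong to $H^1_{0;x}$ (the natural chain rule from Definition~\ref{def:entr_sol}\ref{item:solution-in-spaces-2} requires a bounded $f$, and $\eta_R'\fra$ is unbounded), so the integration by parts against $\Delta\varrho_\eps$ cannot be carried out cleanly at the boundary.

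The paper avoids this circularity by never invoking the entropy formulation for the energy balance. Instead it introduces the viscous approximation $A_n(r)=|r|^{m-1}r+n^{-1}r$, for which the solution $u_n$ is an $L^2_x$-solution in the sense of Definition~\ref{def:L^2_sol}. It\^o's formula for $\|u_n\|_{L^{m+1}_x}^{m+1}$ then holds as an \emph{equality} with no truncation or boundary cutoff needed, yielding the differential inequality directly for $\EE\|u_n(t)\|_{L^{m+1}_x}^{m+1}$ with constants independent of $n$ (Lemma~\ref{lem:supess_L^m+1}). One then passes $n\to\infty$ using the stability result Theorem~\ref{thm:stability} and Fatou's lemma. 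A final application of Fatou together with the $L^1_{w;x}$-continuity of Theorem~\ref{thm:L^1_cont} upgrades the essential-supremum bound to a genuine supremum over $t\in[0,\infty)$, a point you also do not address.
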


\begin{remark} \label{rem:coming_down_infty_L^1} By a simple application of Fatou's lemma we can replace the supremum over $\xi\in L^{m+1}_{\omega, x}$ by a supremum over $\xi\in L^1_\omega L^1_{w;x}$
in \eqref{eq:coming_down_infty}.
\end{remark}

The choice $A(r) = |r|^{m-1}r$ allows us to obtain a polynomial (with respect to time) decay for differences of entropy solutions
uniformly in the initial conditions, as shown in the next theorem. 

\begin{theorem} \label{thm:pol_contr} Let Assumption \ref{as:noise} hold and $m_*=\frac{m}{m-1}$. There exists $C>0$ depending only on $m$ such that for all $t \geq 0$ we have 
  \begin{equs}
 \sup_{ \xi, \tilde{\xi} \in L^1_\omega L^1_{w;x} }\EE\|u(t; \xi) - u(t, \txi)\|_{L^1_{w;x}} \leq C \|w\|_{L^{m_*}_x}^{m_*} t^{-\frac{1}{m-1}}.
  \end{equs}
\end{theorem}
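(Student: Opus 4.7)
The plan is to adapt the dissipativity argument outlined in the introduction to the weighted $L^1$ topology introduced in this paper. Writing $f(t) := \EE \|u(t;\xi) - u(t;\txi)\|_{L^1_{w;x}}$, the target is to establish the differential inequality
\[
f'(t) \leq - C \|w\|_{L^{m_*}_x}^{-m} f(t)^m
\]
(in an appropriate integrated form), after which the claim follows by integrating a scalar ODE of the form $g' = -C g^m$: even without any bound on $f(0)$, one obtains $f(t)^{1-m} \geq (m-1) C \|w\|_{L^{m_*}_x}^{-m} t$, and hence $f(t) \leq C_m \|w\|_{L^{m_*}_x}^{m/(m-1)} t^{-1/(m-1)}$, which is the claim since $m/(m-1) = m_*$.

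The heart of the argument is a \emph{dissipative refinement} of Theorem \ref{thm:uniq_exist}, namely the inequality
\[
\EE \|u(t;\xi) - u(t;\txi)\|_{L^1_{w;x}} + c \int_0^t \EE \|A(u) - A(\tu)\|_{L^1_x} \dd s \leq \EE \|\xi - \txi\|_{L^1_{w;x}}
\]
for $\xi, \txi \in L^{m+1}_{\omega, x}$. Formally this is obtained by testing the equation for $u - \tu$ against $\sgn(u - \tu)\,w$: Kato's inequality applied to $A(u) - A(\tu)$ (whose sign agrees with that of $u - \tu$ because $A$ is strictly increasing), combined with $\Delta w = -1$, $w|_{\partial Q} = 0$ and $A(u)|_{\partial Q} = 0$, turns the diffusion term into $-\int_Q |A(u) - A(\tu)|\dd x$ after two integrations by parts. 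The martingale part vanishes in expectation, and the It\^o correction is handled by the entropy-mollification scheme of \cite{DGG19}, exploiting the $\tfrac12 + \kappa$ H\"older exponent on $\sigma$ from Assumption \ref{as:noise}. I expect this refinement to be extracted in Section \ref{s:basic_est} as a by-product of the proof of \eqref{eq:main_contraction}.

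Given the refined identity, the remaining algebra is direct. For $A(r) = |r|^{m-1} r$ one verifies the pointwise bound $|A(r) - A(\tr)| \geq c_m |r - \tr|^m$ (same-sign case: $r^m - \tr^m \geq (r - \tr)^m$ via $s^m + (1-s)^m \leq 1$ for $s \in [0,1]$, $m>1$; opposite-sign case: $2^{m-1}(a^m + b^m) \geq (a+b)^m$). Hence $\EE \|A(u) - A(\tu)\|_{L^1_x} \gtrsim \EE \|u - \tu\|_{L^m_x}^m$. Combining H\"older's inequality $\|u - \tu\|_{L^1_{w;x}} \leq \|u - \tu\|_{L^m_x} \|w\|_{L^{m_*}_x}$ with Jensen's inequality yields $\EE \|u - \tu\|_{L^m_x}^m \geq \|w\|_{L^{m_*}_x}^{-m} f(t)^m$, closing the chain and producing the ODE above (the differentiation in $t$ is legitimate since $f$ is non-increasing and the integral form holds on every subinterval $[s,t]$). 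The resulting bound is uniform in the initial data, so it extends from $L^{m+1}_{\omega,x}$ to $L^1_\omega L^1_{w;x}$ by approximation using the continuity furnished by Proposition \ref{prop:def-of-v}.

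\textbf{Main obstacle.} The technical heart is the dissipative $L^1$-identity. The It\^o correction naively pairs $|\sigma(u) - \sigma(\tu)|^2$ with a $\delta$-mass at $u = \tu$, and its vanishing requires the careful doubling-of-variables regularisation developed in \cite{DGG19}, powered precisely by the $\tfrac12 + \kappa$ H\"older exponent of Assumption \ref{as:noise}. An additional difficulty, absent from the periodic setting of \cite{DGG19}, is the presence of the Dirichlet boundary and of the weight $w$: boundary contributions generated by the two integrations by parts must be shown to vanish, which is exactly where $w|_{\partial Q} = 0$ and the $H^1_0$-regularity of $A(u)$ from Definition \ref{def:entr_sol}\ref{item:solution-in-spaces} enter.
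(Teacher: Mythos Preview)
Your proposal is correct and follows essentially the same route as the paper: the dissipative refinement you anticipate is exactly Corollary \ref{cor:contraction} (equation \eqref{eq:contraction_A}), the pointwise bound $|A(r)-A(\tr)|\gtrsim|r-\tr|^m$ is Lemma \ref{lem:lwr_bd}, the H\"older/Jensen step and the ODE comparison (formalised as Lemma \ref{lem:int_ineq}) are carried out verbatim, and the extension to $L^1_\omega L^1_{w;x}$ initial data via Proposition \ref{prop:def-of-v} is as you describe. The only detail you gloss over is that \eqref{eq:contraction_A} is proved in the paper under the $(\star)$-property hypothesis, so one first establishes it for the viscous approximations $u_n,\tu_n$ of Proposition \ref{prop:viscoous-well-posedenss} and passes to the limit via Theorem \ref{thm:stability}, using the $L^1_\omega L^1_{w;x}$-continuity of Theorem \ref{thm:L^1_cont} to upgrade the inequality from almost every $s<t$ to every $s<t$.
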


\begin{remark} \label{rem:opt} The decay rate in Theorem \ref{thm:pol_contr} is optimal. Indeed, one can consider the homogeneous porous medium
equation $\partial_t u = \Delta \left( |u|^{m-1} u\right)$ with Dirichlet boundary conditions and search for solutions of the form  
$u(t,x) = (1+t)^{-\frac{1}{m-1}} f(x)$. It is easy to check that $u$ is a solution if $f$ satisfies the following equation,
\begin{equs}
 \begin{cases}
  & \Delta \left(|f|^{m-1} f\right) + \frac{1}{m-1} f = 0 \\
  & f|_{\partial Q} = 0.
 \end{cases}
 \label{eq:f}
\end{equs}
Existence and regularity of non-zero solutions to \eqref{eq:f} for sufficiently smooth domains was studied in \cite{AP81}. In 
particular, \cite[Proposition 1]{AP81} implies the existence of a non-zero solution $f$ which is bounded on $\partial \Omega \cup \Omega$.
\end{remark}

Below we let $B_b(L^1_{w;x})$ be the space of bounded Borel measurable functions from $L^1_{w;x}$ to $\RR$. We need the following definition. 

\begin{definition} \label{def:semigroup} We define $P_t: B_b(L^1_{w;x}) \to B_b(L^1_{w;x})$ by
\begin{equs}
 P_tF(\xi):= \EE F(u(t;\xi))
\end{equs}
where $F\in B_b(L^1_{w;x})$ and $\xi\in L^1_{w;x}$ . 
\end{definition}

\begin{proposition} \label{prop:Markov}  Let Assumption \ref{as:noise} hold. The family $(P_t)_{t \geq 0} $ is a Feller Markov semigroup on $B_b(L^1_{w;x})$. 
\end{proposition}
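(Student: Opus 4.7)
The plan is to verify three properties: (i) $P_t$ sends $B_b(L^1_{w;x})$ to itself; (ii) the Feller property, $P_t(C_b(L^1_{w;x})) \subseteq C_b(L^1_{w;x})$; and (iii) the semigroup/Markov property $P_{t+s} = P_t P_s$. Boundedness is immediate from $|P_tF(\xi)| \leq \|F\|_\infty$, so the substantive content is continuity/measurability and the restart identity.

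First I would establish the Feller property, which is essentially a direct consequence of Proposition \ref{prop:def-of-v}. Given $F \in C_b(L^1_{w;x})$ and a sequence $\xi_n \to \xi$ in $L^1_{w;x}$, the contraction estimate \eqref{eq:v_contr} gives
\begin{equs}
\EE\|u(t;\xi_n) - u(t;\xi)\|_{L^1_{w;x}} \leq \|\xi_n - \xi\|_{L^1_{w;x}} \to 0,
\end{equs}
so $u(t;\xi_n) \to u(t;\xi)$ in probability in $L^1_{w;x}$. Since $F$ is continuous and bounded, $F(u(t;\xi_n)) \to F(u(t;\xi))$ in probability and bounded convergence yields $P_tF(\xi_n) \to P_tF(\xi)$. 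Hence $P_tF \in C_b(L^1_{w;x})$.

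Next I would promote measurability to all $F \in B_b(L^1_{w;x})$ by a monotone class argument. The class $\cH := \{F \in B_b(L^1_{w;x}) : P_tF \text{ is Borel measurable}\}$ contains $C_b(L^1_{w;x})$ by the previous step, is a vector space, contains constants, and is stable under bounded monotone limits by the monotone convergence theorem. Since the Borel $\sigma$-algebra on $L^1_{w;x}$ is generated by $C_b(L^1_{w;x})$, the functional monotone class theorem gives $\cH = B_b(L^1_{w;x})$.

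Finally, for the semigroup (Markov) property, I would first treat $\xi \in L^{m+1}_\omega L^{m+1}_x$. Fix $t, s \geq 0$ and introduce the shifted noise $\tilde\beta^k(r) := \beta^k(t+r) - \beta^k(t)$, which is a sequence of independent Brownian motions with respect to the shifted filtration $\tilde{\cF}_r := \cF_{t+r}$, independent of $\cF_t$. Using the restart property of entropy solutions (Corollary \ref{cor:restart}) together with the uniqueness from Theorem \ref{thm:uniq_exist}, the process $r \mapsto u(t+r;\xi)$ coincides with the unique entropy solution of $\cE(A,\sigma,u(t;\xi))$ driven by $(\tilde\beta^k)_{k\geq 1}$ on the shifted filtration. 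Denoting this solution by $\tilde u(r;u(t;\xi))$ and using independence of $\tilde\beta^k$ from $\cF_t$, a standard conditioning argument gives, for $F \in C_b(L^1_{w;x})$,
\begin{equs}
P_{t+s}F(\xi) = \EE F(u(t+s;\xi)) = \EE\big[\EE[F(\tilde u(s;\eta))]_{\eta = u(t;\xi)}\big] = \EE(P_sF)(u(t;\xi)) = P_t(P_sF)(\xi).
\end{equs}
For general $\xi \in L^1_{w;x}$, I would approximate by $\xi_n \in L^{m+1}_\omega L^{m+1}_x$ and pass to the limit: the identity $P_{t+s}F(\xi_n) = P_t(P_sF)(\xi_n)$ holds for every $n$, and both sides are continuous in $\xi$ in $L^1_{w;x}$ (by Feller applied to $F$ and to $P_sF$). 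Monotone class then extends the identity to $F \in B_b(L^1_{w;x})$.

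The main obstacle is the restart step for initial data only in $L^1_{w;x}$: the existence and uniqueness theorem is stated in $L^{m+1}$, so the restart identity is only directly available there, and one must use the continuity of the extension in Proposition \ref{prop:def-of-v} together with the Feller property already proved for $P_sF$ to justify passing $\xi_n \to \xi$ in the Markov identity. The monotone class step to remove the continuity assumption on $F$ is then routine.
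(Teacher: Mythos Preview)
Your proposal is correct and follows essentially the same route as the paper: Feller from the $L^1_{w;x}$ contraction \eqref{eq:v_contr}, then the Markov identity via the restart property (Corollary \ref{cor:restart}) for $L^{m+1}$ initial data, followed by approximation to reach $L^1_{w;x}$. The only place where the paper is more explicit is the ``standard conditioning argument'' you invoke: the paper proves the identity $\EE\big(F(u_s(t;\zeta_0))\,\big|\,\mathcal{F}_s\big)=\EE F(u_s(t;\zeta))\big|_{\zeta=\zeta_0}$ for $\mathcal{F}_s$-measurable $\zeta_0\in L^{m+1}_\omega L^{m+1}_x$ by first treating simple $\zeta_0$ and then passing to the limit via the contraction estimate; your formulation with the shifted noise $\tilde\beta^k$ is an equivalent way to package the same independence/factorisation step.
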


\begin{remark} One can actually define $P_t: B_b(L^1_{w;x}) \to B_b(L^1_{w;x})$ for any $A$ satisfying Assumption \ref{as:A}-\ref{as:A first} 
using Proposition \ref{prop:def-of-v}. It is clear from the proof of Proposition \ref{prop:Markov} that $P_t$ is a Feller Markov semigroup on
$B_b(L^1_{w;x})$ even in this case.
\end{remark}

Theorem \ref{thm:pol_contr} provides a quantitative estimate for the semigroup $P_t$ acting on Lipschitz continuous functions on $L^1_{w;x}$ and allows to 
prove the existence and uniqueness of an invariant measure (which is actually supported on $L^{m+1}_x$). It also provides optimal mixing rates (see Remark \ref{rem:opt})
uniformly in the initial condition. We summarise in the following theorem. 

\begin{theorem} \label{thm:pol_mix} Let Assumption \ref{as:noise} hold and $m_*=\frac{m}{m-1}$. There exists a unique invariant measure $\mu\in \mathcal{M}_1(L^1_{w;x})$ for the 
semigroup $P_t$ which moreover is supported on $L^{m+1}_x$. Furthermore, there exist $C>0$, depending only on $m$, such that for all $t \geq 0$
  \begin{align*}
  \sup_{\xi \in L^1_{w;x}}\sup_{\|F\|_{\mathrm{Lip}(L^1_{w;x})} \leq 1} |P_t F(\xi) - \int_{L^1_{w;x}} F(\txi) \mu(\mathrm{d} \txi)| \leq 
  C \|w\|_{L^{m_*}_x}^{m_*} t^{-\frac{1}{m-1}},
  \end{align*}
where $\mathrm{Lip}\left(L_{w;x}^{1}\right)$ is the space of Lipschitz continuous functions from $L_{w;x}^{1}$ to $\RR$.
\end{theorem}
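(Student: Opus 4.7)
The plan is to derive Theorem~\ref{thm:pol_mix} as an essentially routine consequence of the uniform contraction of Theorem~\ref{thm:pol_contr}, combined with the Feller/Markov structure from Proposition~\ref{prop:Markov} and the ``coming down from infinity'' bound of Proposition~\ref{prop:L^m+1_bddness}. The key observation is that uniformity of Theorem~\ref{thm:pol_contr} in the initial condition upgrades to a Wasserstein-$1$ bound between arbitrary pairs of laws.

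First I would lift Theorem~\ref{thm:pol_contr} to measures. Let $W_1$ denote the Wasserstein-$1$ metric on $\mathcal{M}_1(L^1_{w;x})$ associated to the $L^1_{w;x}$-distance. For deterministic initial conditions $\xi,\txi$, driving both solutions with the same Brownian noise gives the coupling bound $W_1(P_t\delta_\xi,P_t\delta_{\txi}) \leq \EE\|u(t;\xi)-u(t;\txi)\|_{L^1_{w;x}} \leq C\|w\|_{L^{m_*}_x}^{m_*} t^{-1/(m-1)}$. By the convexity of $W_1$ in each argument, this extends to
\begin{equs}
W_1(P_t\mu_1,P_t\mu_2) \leq \int W_1(P_t\delta_\xi,P_t\delta_{\txi}) d(\mu_1\otimes \mu_2)(\xi,\txi) \leq C\|w\|_{L^{m_*}_x}^{m_*} t^{-\frac{1}{m-1}} \label{eq:unif-W1-plan}
\end{equs}
for all $\mu_1,\mu_2\in \mathcal{M}_1(L^1_{w;x})$. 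In the same way, Theorem~\ref{thm:uniq_exist} yields the non-expansion $W_1(P_t\mu_1,P_t\mu_2)\leq W_1(\mu_1,\mu_2)$.

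Second, I would build $\mu$ by a Cauchy argument. Fix $\xi_0\in L^1_{w;x}$ and set $\mu_n:=P_n\delta_{\xi_0}$. Applying \eqref{eq:unif-W1-plan} with $(\mu_1,\mu_2)=(\delta_{\xi_0},P_k\delta_{\xi_0})$ at time $n$ gives $W_1(\mu_n,\mu_{n+k})\leq Cn^{-1/(m-1)}$ for every $k\geq 0$, so $(\mu_n)$ is Cauchy in $W_1$. Proposition~\ref{prop:L^m+1_bddness} (with Remark~\ref{rem:coming_down_infty_L^1}) and H\"older's inequality give the uniform first-moment bound $\int \|y\|_{L^1_{w;x}} d\mu_n(y)\leq C$ for $n\geq 1$, so completeness of $(\mathcal{M}_1(L^1_{w;x}),W_1)$ (as $L^1_{w;x}$ is Polish) yields $\mu_n\to\mu$ in $W_1$ for some $\mu\in\mathcal{M}_1(L^1_{w;x})$. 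Invariance then follows from
\begin{equs}
W_1(P_t\mu,\mu) \leq W_1(P_t\mu,P_t\mu_n)+W_1(\mu_{n+t},\mu_n)+W_1(\mu_n,\mu)\leq W_1(\mu,\mu_n)+Cn^{-1/(m-1)}+W_1(\mu_n,\mu)
\end{equs}
after letting $n\to\infty$, using the non-expansion for the first term and \eqref{eq:unif-W1-plan} for the middle one. Uniqueness is immediate: for any other invariant $\nu$, \eqref{eq:unif-W1-plan} gives $W_1(\mu,\nu)=W_1(P_t\mu,P_t\nu)\to 0$. For the support, Proposition~\ref{prop:L^m+1_bddness} produces $\int \|y\|_{L^{m+1}_x}^{m+1} d\mu_n(y)\leq C$ for $n\geq 1$; lower semicontinuity of $y\mapsto \|y\|_{L^{m+1}_x}^{m+1}$ on $L^1_{w;x}$ (via Fatou on a.e.-convergent subsequences) and the Portmanteau theorem then pass this bound to $\mu$, forcing $\mu(L^{m+1}_x)=1$. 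Finally, the mixing estimate is immediate from Kantorovich--Rubinstein duality: for $\|F\|_{\mathrm{Lip}(L^1_{w;x})}\leq 1$, invariance of $\mu$ and \eqref{eq:unif-W1-plan} give
\begin{equs}
\left|P_tF(\xi)-\int F d\mu \right| \leq W_1(P_t\delta_\xi, P_t\mu) \leq C\|w\|_{L^{m_*}_x}^{m_*} t^{-\frac{1}{m-1}}.
\end{equs}

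All the genuine PDE work is absorbed in Theorem~\ref{thm:pol_contr}; the principal technical point to handle with care in the proof above is verifying that the $W_1$-limit $\mu$ actually inherits the correct integrability (both the $L^1_{w;x}$ first moment needed to sit in $\mathcal{M}_1$ and the $L^{m+1}_x$-moment needed for the support claim). Both follow from uniform integrability arguments relying on the ``coming down from infinity'' bound of Proposition~\ref{prop:L^m+1_bddness} together with the fact that $L^1_{w;x}$ is a separable Banach space.
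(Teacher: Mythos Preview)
Your proof is correct and reaches the same conclusion, but it is organised differently from the paper's argument. The paper constructs $\mu$ by a \emph{pullback} scheme: it extends the noise to the whole real line, sets $\eta_s(\xi):=u_s(0;\xi)$, and uses Theorem~\ref{thm:pol_contr} to show that $(\eta_s(\xi))_{s\le -1}$ is Cauchy in $L^1_\omega L^1_{w;x}$ with limit $\eta$ independent of $\xi$; then $\mu:=\mathcal L(\eta)$, and invariance is checked via the two-parameter semigroup identities $P_{s,t}=P_{s+\tau,t+\tau}$ and $P_{s,\tau}P_{\tau,t}=P_{s,t}$. The support on $L^{m+1}_x$ is then a one-line Fatou argument applied to the random variable $\eta$ itself. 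Your route instead works \emph{forward} in time and in the Wasserstein-$1$ metric: you show $(P_n\delta_{\xi_0})_n$ is $W_1$-Cauchy, appeal to completeness of $\mathcal P_1(L^1_{w;x})$, and read off invariance, uniqueness and mixing from the uniform $W_1$ contraction together with Kantorovich--Rubinstein duality. The trade-off is that the paper avoids the Wasserstein machinery entirely and gets the $L^{m+1}_x$ support for free from the concrete limit $\eta$, whereas your argument is more streamlined once $W_1$ is set up but requires the extra (admittedly routine) checks you flag at the end: that $\|\cdot\|_{L^{m+1}_x}^{m+1}$ is lower semicontinuous on $L^1_{w;x}$ and that the moment bounds of Proposition~\ref{prop:L^m+1_bddness} pass to the $W_1$-limit. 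For Step~2 (uniqueness and the mixing rate) the two proofs are essentially identical: both integrate the pointwise bound of Theorem~\ref{thm:pol_contr} against $\mu$.
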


\section{The \texorpdfstring{$(\star)$}{star}-property and \texorpdfstring{$L^1$}{L1}-estimates} \label{s:basic_est}

In this section we introduce the $(\star)$-property, a purely technical concept, and derive the basic $L^1$-estimates which we use in later sections to prove our main 
results. Before we proceed we need some notation. 

Below we fix a non-negative compactly supported smooth function $\rho:\RR \to \RR$ supported in $(0,1)$ such that $\int \rho(t) \dd t = 1$ and for $\theta \in(0,1)$ we set $\rho_\theta (t) := \theta^{-1} \rho(\theta^{-1}t)$.
For $x\in \RR^d$ we also let $\varrho(x) := \prod_{i=1}^d \rho(x_i)$ and for $\eps\in (0,1)$ we set $\varrho_\eps(x) := \eps^{-d} \varrho(\eps^{-1}x)$.  

For $g\in C^\infty(\RR)$ with $\supp g'$ compact, $\tilde\varrho\in C^\infty_c(Q\times Q)$, $\varphi\in C^\infty_c((0,T))$, $\tilde \sigma$ as in Assumption 
\ref{as:noise}, predictable random variable $\tilde u\in L^{m+1}_{\omega,t}L^{m+1}_x$, $\theta>0$ and $a\in \RR$ we set
\begin{align*}
 & \phi_\theta(t,x,s,y) := \tilde\varrho(x,y) \rho_\theta(t-s) \varphi\left(\frac{t+s}{2}\right) \\
 & F_\theta(t,x,a) := \int_0^T \int_Q \tilde\sigma^k(y,\tu(s,y)) g(\tu(s,y) - a) \phi_\theta(t,x,s,y) \dd y \dd \beta^k(s),
\end{align*}
with a slight abuse of notation since we hide the dependence of $\phi_\theta$ and $F_\theta$ on the various functions.

We need the following definition.

\begin{definition} \label{def:star_prop} We say that a predictable random variable $u\in L^{m+1}_{\omega,t}L^{m+1}_x$ has the $(\star)$-property with coefficient $\sigma$
if for every $g,\varrho,\varphi,\tsigma, \tu$ as above and for every $\theta>0$ sufficiently small, we have that $F_\theta(\cdot,\cdot,u)\in L^1_\omega L^1_{t,x}$
and 
\begin{equs}
 & \EE \int_0^T \int_Q F_\theta(t,x,u(t,x)) \dd x \dd t\\
 & \quad \leq - \EE \int_{[0,T]^2} \int_{Q^2} \sigma^k(x,u(s,x)) \tsigma^k(y, \tu(s,y)) g^{\prime} (u(s,x) - \tu(s,y)) \phi_\theta(t,x,s,y) \dd x \dd y \dd t \dd s
 + C \theta^{1-\mu} \\ 
 \label{eq:star_prop}
\end{equs}
for $\mu = \frac{3m+5}{4(m+1)}$ and some constant $C>0$ (independent of $\theta$).
\end{definition}

The main result of this section is Lemma \ref{lem:final_psi} which is the counterpart of \cite[Theorem 4.1]{DGG19} for Dirichlet boundary conditions.
% In this case it is necessary to counterbalance the effect of the boundary with a smooth compactly supported function $\psi$. Because of $\psi$ our estimates depend on the term
% $|A(u) - A(\tu)| \Delta \psi$ which does not appear in \cite{DGG19}.
For the readers convenience we split the proof of Lemma \ref{lem:final_psi} into Proposition \ref{prop:basic_est} and Lemma \ref{lem:almost_final}.

From now on, for $\alpha, \delta, \eps \in (0,1)$ and $\lambda \geq 0$, we set 
\begin{equs} \label{eq:def-G}
\mathcal{G}_\alpha(\delta, \eps, \lambda) := \delta^{2\kappa} + \delta^{-1} \eps^{2\bar\kappa} 
+ \delta \eps^{-1} + \delta^{2\alpha} \eps^{-2} + \eps^{-2} \lambda^2 + \eps^{-1} \lambda.
\end{equs}

\begin{proposition} \label{prop:basic_est} Let $u, \tu$ be entropy solutions of the Dirichlet problems 
$\mathcal{E}(A,\sigma,\xi)$, $\mathcal{E}(\tA,\tsigma,\txi)$, where the data satisfy  Assumptions \ref{as:A} and \ref{as:noise},
and  assume that $u$ satisfies the $(\star)$-property with coefficient $\sigma$ (see Definition \ref{def:star_prop}). Then, for every 
non-negative functions $\psi \in C^\infty_c(Q)$, $\varphi\in C^\infty_c((0,T))$, and $\al\in(0,\frac{1}{2})$ there exists $C\equiv C(\mathcal{S}, \alpha, \psi, \varphi )>0$ 
such that for every $\lambda,\eps,\delta \in (0,1)$ we have that
\begin{equs}
 & - \EE\int_0^T \int_{Q^2} |u(t,x) - \tu(t,y)|  \partial_t\varphi(t) \psi(x)  \varrho_\eps(x-y) \dd x \dd y \dd t \\
 & \quad \leq \EE \int_0^T \int_{Q^2} |A(u(t,x)) - \tA(\tu(t,y))| \varphi(t) \Delta \psi(x) \varrho_\eps(x-y) \dd x \dd y \dd t  \\
 & \quad \quad + C \eps^{-2} \EE\left(\|\mathbf{1}_{|u|\geq R_\lambda} (1+|u|)\|_{L^m_{t,x}}^m 
 + \|\mathbf{1}_{|\tu|\geq R_\lambda} (1+|\tu|)\|_{L^m_{t,x}}^m\right) \\
 & \quad \quad + C \left(\mathcal{G}_\alpha(\delta, \eps, \lambda) + \delta^{-1} d(\sigma,\tsigma)\right) \EE\left(1+\|u\|_{L^{m+1}_{t,x}}^{m+1}+\|\tu\|_{L^{m+1}_{t,x}}^{m+1} \right)
\end{equs}
where $R_\lambda = \sup\{R\in[0,\infty]: |\fra(r) - \tfra(r)| \leq \lambda, \text{ for every } |r|\leq R\}$ and $\mathcal{G}_\alpha$ as in \eqref{eq:def-G}. 
\end{proposition}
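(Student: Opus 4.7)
The plan is to implement Kruzhkov's doubling-of-variables technique in the stochastic setting, adapting the proof of \cite[Theorem 4.1]{DGG19} to account for the spatial cutoff $\psi$ that localises away from the Dirichlet boundary. First I would fix a smooth $g_\delta$ approximating $\sgn$ at scale $\delta$, set $\eta^{\delta}_a(r):=\int_0^r g_\delta(\zeta-a)\,d\zeta$, and apply the entropy inequality for $u(t,x)$ with admissible pair $(\eta^\delta_a,\phi_\theta(\cdot,\cdot,s,y))$, where $\phi_\theta(t,x,s,y):=\psi(x)\varrho_\eps(x-y)\rho_\theta(t-s)\varphi((t+s)/2)$, freezing $a$ before substituting $a=\tu(s,y)$ and integrating over $(s,y)$. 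The symmetric inequality for $\tu(s,y)$ with the same test function (and $\tA,\tsigma$) is applied, and the two are summed in expectation.

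The time-derivative contributions combine as $\partial_t\phi_\theta+\partial_s\phi_\theta$, eliminating the singular $\rho_\theta'(t-s)$ and producing the LHS up to a smoothing error $\mathcal{O}(\delta^{2\al})$ from approximating $|u-\tu|$ via the $g_\delta$-primitive (contributing $\delta^{2\al}\eps^{-2}$ to $\mathcal{G}_\al$). The Laplacian contributions $[\eta'\fra^2](u)\Delta_x\phi_\theta$ and $[\eta'\tfra^2](\tu)\Delta_y\phi_\theta$ are combined using $\Delta_y\varrho_\eps(x-y)=\Delta_x\varrho_\eps(x-y)$: the $\Delta\psi\cdot\varrho_\eps$ piece yields the leading RHS term $|A(u)-\tA(\tu)|$, the $\nabla\psi\cdot\nabla_x\varrho_\eps$ cross term is absorbed into the $\delta\eps^{-1}$ contribution of $\mathcal{G}_\al$, and the singular $\psi\Delta_x\varrho_\eps$ pieces are cancelled, up to lower-order remainders, by the entropy dissipation $-\phi_\theta\eta''(u)|\nabla[\fra](u)|^2$ and its $\tu$-counterpart. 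Splitting both $|u|$ and $|\tu|$ at the threshold $R_\lambda$, on $\{|u|,|\tu|\leq R_\lambda\}$ the cancellation leaves an $\eps^{-2}\lambda^2+\eps^{-1}\lambda$ remainder, while on the complement one directly estimates via the tail term $\eps^{-2}\|\mathbf{1}_{|u|\geq R_\lambda}(1+|u|)\|_{L^m_{t,x}}^m$.

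The crucial stochastic step concerns the It\^o corrections $\tfrac{1}{2}\phi_\theta\eta''(u)|\sigma(u)|^2_{\ell^2}$, $\tfrac{1}{2}\phi_\theta\eta''(\tu)|\tsigma(\tu)|^2_{\ell^2}$ together with the two martingales. Applying the $(\star)$-property of Definition \ref{def:star_prop} to $u$ with $\tilde u=\tu$ and $g=g_\delta$ handles the stochastic cross-contribution, converting the interaction in expectation into the cross-term $-\sigma^k(x,u(t,x))\tsigma^k(y,\tu(s,y))g_\delta'(u-\tu)\phi_\theta$ modulo an additive error of order $\theta^{1-\mu}$. Combining this symmetrically with the two It\^o corrections yields the quadratic form $\tfrac{1}{2}g_\delta'(u-\tu)|\sigma(x,u)-\tsigma(y,\tu)|^2_{\ell^2}$, which by Assumption \ref{as:noise} (valid in the regime $|u-\tu|\leq\delta\leq 1$ enforced by $\mathrm{supp}\,g_\delta'$) is bounded by $(\delta^{1+2\kappa}+\eps^{2\bar\kappa})(1+|u|+|\tu|)^{m+1}+d(\sigma,\tsigma)(1+|u|)^{m+1}$; multiplying by $g_\delta'\sim\delta^{-1}$ and integrating produces the remaining $\delta^{2\kappa}+\delta^{-1}\eps^{2\bar\kappa}+\delta^{-1}d(\sigma,\tsigma)$ pieces of $\mathcal{G}_\al$.

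Finally, I would pass to the limit $\theta\to 0$ using the $(\star)$-property on the $u$-side and standard mollifier convergence on the $\tu$- and deterministic sides (available from item \ref{item:solution-in-spaces} of Definition \ref{def:entr_sol}); the $\theta^{1-\mu}$ error vanishes. The main obstacle I expect is the precise accounting of the singular $\psi\Delta_x\varrho_\eps$ terms against the entropy dissipation: using the chain-rule identity of item \ref{item:solution-in-spaces-2} of Definition \ref{def:entr_sol} to rewrite dissipation as $|\nabla[\fra\sqrt{\eta''}](u)|^2$, an integration by parts in $x$ allows matching with $\psi\Delta_x\varrho_\eps$, but only after truncation at $R_\lambda$ and via the second bound in \eqref{eq:as Psi}; ensuring that no combination of parameters $(\al,\delta,\eps,\lambda,\theta)$ causes an error to blow up and that all residuals reassemble exactly into $\mathcal{G}_\al(\delta,\eps,\lambda)$ is what makes the bookkeeping delicate.
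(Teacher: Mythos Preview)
Your overall strategy is the same as the paper's: Kruzhkov doubling with the test function $\phi_\theta(t,x,s,y)=\psi(x)\varrho_\eps(x-y)\rho_\theta(t-s)\varphi((t+s)/2)$, summing the two entropy inequalities, passing $\theta\to 0$, and cancelling the $\psi\,\partial^2_{x_iy_i}\varrho_\eps$ pieces against the dissipation. However, two of your attributions are off and one of them hides the main new boundary step.

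First, the $\nabla\psi\cdot\nabla_x\varrho_\eps$ cross terms are \emph{not} absorbed into $\delta\eps^{-1}$. Only the error from replacing $g_\delta$ by $\sgn$ costs $\delta\eps^{-1}$. The leading pieces are $\int |A(u)-A(\tu)|\,\partial_{x_i}\psi\,\partial_{x_i}\varrho_\eps$ from the $u$-inequality and, using $\partial_{y_i}\varrho_\eps=-\partial_{x_i}\varrho_\eps$, $-\int |\tA(u)-\tA(\tu)|\,\partial_{x_i}\psi\,\partial_{x_i}\varrho_\eps$ from the $\tu$-inequality. These cancel only when $A=\tA$; in general one uses the triangle inequality and the bound $|A(r)-\tA(r)|\lesssim \lambda|r|^{(m+1)/2}+\mathbf{1}_{|r|\ge R_\lambda}|r|^m$ to get the $\eps^{-1}\lambda$ and tail contributions. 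The same comparison is needed once more to pass from $|A(u)-A(\tu)|$ to $|A(u)-\tA(\tu)|$ in the $\Delta\psi$ term. This $A$-versus-$\tA$ step on the $\nabla\psi$ cross terms is precisely the boundary-specific novelty relative to \cite{DGG19}, and your sketch skips it. Second, the $\delta^{2\alpha}\eps^{-2}$ term does not come from approximating $|u-\tu|$ on the time-derivative side (that error is simply $O(\delta)$, with no $\eps^{-2}$); it arises in the post-cancellation remainder $\int\!\!\int \eta_\delta''(r-\tr)|\fra(r)-\tfra(\tr)|^2\,|\partial^2_{x_iy_i}\phi_\eps|$, using the $\alpha$-H\"older continuity of $\fra$ on $|r-\tr|\le\delta$. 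Finally, the $\tu$-side martingale is not handled by ``mollifier convergence'': it vanishes identically in expectation for every $\theta>0$ by $\mathcal F_s$-adaptedness of $\tu(s,y)$, so only the $u$-side martingale needs the $(\star)$-property.
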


\begin{proof} The proof is similar to \cite[proof of Theorem 4.1]{DGG19}. The main difference lies in the presence of $\psi$ since we impose Dirichlet
boundary conditions, thus, we cannot let $\psi = 1$ as in the case of periodic boundary conditions dealt in \cite{DGG19}. We only give a sketch of the proof,
highlighting the differences. 

Let $\eta_\delta$ be a symmetric smooth approximation of $|\cdot|$ given by 
\begin{equs}
 \eta_\delta(0) = \eta_\delta^{\prime}(0) = 0, \, \eta_\delta^{\prime\prime}(r) = \delta^{-1} \tilde\eta(\delta^{-1}|r|)
\end{equs}
for some non-negative $\tilde\eta\in C^\infty(\RR)$ which is bounded by $2$, supported in $(0,1)$ and integrates to $1$. Below we repeatedly use the following
properties of $\eta_\delta$,
\begin{equs}
 |\eta_\delta(r) - |r|| \lesssim \delta, \, \supp \eta_\delta^{\prime\prime} \subset [-\delta,\delta], \,\int |\eta^{\prime \prime}_\delta(r-\tilde r)|\dd \tilde r \leq 2,
 \, |\eta_\delta^{\prime\prime}(r)|\leq 2\delta^{-1}.  
\end{equs}
For $y \in Q$, $s \in (0,T)$ and $\eps,\theta>0$ sufficiently small, we also set 
\begin{align*}
 & \phi_{\eps,\theta}(t,x,s,y) := \rho_\theta(t-s) \varrho_\eps(x-y) \varphi\left(\frac{t+s}{2}\right) 
 \psi(x), \, \phi_\eps(t,x,y) := \varrho_\eps(x-y) \varphi(t) \psi(x).
\end{align*}

We first apply the definition of entropy solutions with $u=u(t,x)$, $\eta(u) = \eta_\delta(u - a)$, for $ a \in \bR$, and 
$\phi(t,x) = \phi_{\eps,\theta}(t,x,s,y)$. Noting that $\phi_{\eps,\theta}(0,x,s,y) =0$ for $\theta$ sufficiently small, this 
gives that, $\PP$-almost surely,
\begin{equs}
 -\int_0^T \int_Q \eta_\delta(u-a) \partial_t \phi_{\eps,\theta} \dd x \dd t  
 & \leq \int_0^T \int_Q [\eta'_\delta(\cdot-a) \fra ^2,a](u) \Delta_x\phi_{\eps,\theta} 
 + \frac{1}{2}  \phi_{\eps,\theta} \eta_\delta''(u - a) |\sigma(x,u)|^2_{\ell^2} \dd x \dd t  \\ 
 & \quad - \int_0^T \int_Q \phi_{\eps,\theta} \eta_\delta''(u - a) |\nabla_x[\fra ](u)|^2 \dd x \dd t \\
 & \quad + \int_0^T \int_Q   \phi_{\eps,\theta} \eta_\delta'(u - a) \sigma^k(x,u)  \dd x \dd\beta^k(t).
\end{equs}
We now plug in $ \tilde u(s,y)$ in place of $a$ (all the expressions are smooth functions of $a$) and integrate over $s,y$. Then we repeat the same procedure with  the roles of $u$ and $\tilde u$ reversed, we add the two resulting inequalities and
take expectations to obtain the estimate
\begin{equs} 
 & - \EE \int_{t,x,s,y} \eta_\delta(u- \tilde u)  \left(\partial_t\phi_{\eps,\theta}+ \partial_s\phi_{\eps,\theta}\right)  
 \\
 & \quad \leq \EE \int_{t,x,s,y} \left([\eta_\delta^{\prime}(\cdot - \tu) \fra ^2,\tu](u) \Delta_x \phi_{\eps,\theta}+[\eta^{\prime}_\delta(\cdot- u) \tfra ^2,u](\tu) \Delta_y\phi_{\eps,\theta} \right) 
 \\ 
 & \quad \quad - \EE \int_{t,x,s,y} \eta_\delta^{\prime \prime}(u - \tu)\left(  |\nabla_x [\fra](u)|^2 + |\nabla_y [\tfra](\tu)|^2 \right) \phi_{\eps,\theta} 
 \\
 & \quad \quad + \EE \int_{s,y} \left[ \int_{t,x} \eta_\delta^{\prime}(u - a) \sigma^k(x,u) \phi_{\eps,\theta} \dd \beta^k(t)\right]_{a=\tu}  + \EE \int_{t,x} \left[ \int_{s,y} \eta_\delta^{\prime}(\tu - a) \tsigma^k(y,\tu) \phi_{\eps,\theta} \dd \beta^k(s) \right]_{a=u}
 \\
 & \quad \quad + \EE \int_{t,x,s,y} \frac{1}{2}\eta_\delta^{\prime \prime}(u - \tu)\phi_{\eps,\theta} \left(  |\sigma(u)|^2_{\ell^2} + |\tsigma(\tu)|^2_{\ell^2}\right) \label{eq:int_doubling} 
\end{equs}
The next step is to pass to the limit $\theta\to 0$ to obtain the estimate
\begin{align} 
 \nonumber
 - \EE \int_{t,x,y} \eta_\delta(u - \tu) \partial_t\phi_\eps & \leq \EE \int_{t,x,y}[\eta_\delta^{\prime}(\cdot - \tu) \fra^2,\tu] (u)  
 \Delta_x \phi_\eps + [\eta_\delta^{\prime}(\cdot - u) \tfra^2,u](\tu) \Delta_y\phi_\eps   && (=:I_1+I_2)\\   
 \nonumber
 & \quad - \EE \int_{t,x,y}\eta_\delta^{\prime \prime}(u - \tu) \phi_\eps  \left(|\nabla_x [\fra](u)|^2 +|\nabla_y [\tfra](\tu)|^2 \right) &&  (=:I_3)\\
 & \quad + \frac{1}{2} \EE \int_{t,x,y} \eta_\delta^{\prime \prime}(u - \tu)\phi_\eps |\sigma(x,u)-\tsigma(y,\tu)|^2_{\ell^2}, &&(=:I_4) \label{eq:int_doubling_theta=0} 
\end{align}
with $u=u(t,x)$ and $\tu = \tu(t,y)$. 
To do so, we first notice that  
\begin{align*}
 \partial_t\phi_{\eps,\theta}(t,x,s,y) + \partial_s\phi_{\eps,\theta}(t,x,s,y) = \rho_\theta(t-s) \varrho_\eps(x-y) (\D_t \varphi) \left(\frac{t+s}{2}\right)
 \psi(x). 
\end{align*}
We then use \cite[Proposition 3.5]{DGG19} to pass to the limit $\theta\to 0$ for each term in \eqref{eq:int_doubling}.
The main difference here is the presence of $\psi$, but it is easy to see that \cite[Proposition 3.5]{DGG19} still applies in our case since $\psi$ is in $\CC^\infty_c(Q)$.
The only subtle terms in \eqref{eq:int_doubling} are those involving the stochastic integrals, but they can also be treated as in \cite[proof of Theorem 4.1]{DGG19}. For the 
first stochastic integral we notice that $\phi_{\eps,\theta}$ vanishes for $t\notin [s,s+\theta]$, and since $\tilde u(s,y)$ is $\mathcal{F}_s$-measurable,
the expectation vanishes for every $\theta>0$ by a simple factorisation argument. For the second stochastic integral, we use the $(\star)$-property for $u(t,x)$ which together 
with the last term in \eqref{eq:int_doubling} gives $I_4$ in \eqref{eq:int_doubling_theta=0} if we let $\theta\to 0$. For the term $I_1$ we notice that 
\begin{align*}
 I_1 & = \EE \int_{t,x,y}[\eta_\delta^{\prime}(\cdot - \tu) \fra^2,\tu](u)  
 \Delta_x \phi_\eps \\
 & = -\EE \int_{t,x,y}[\eta_\delta^{\prime}(\cdot - \tu) \fra^2,\tu](u) \D^2_{x_iy_i} \phi_\eps && (=: I_{1,1}) \\
 & \quad + \EE \int_{t,x,y} [\eta_\delta^{\prime}(\cdot - \tu) \fra^2,\tu](u)   \varphi(t) 
 \partial_{x_i} \left(\partial_{x_i}\psi(x) \varrho_\eps(x-y)\right). && (=: I_{1,2})
\end{align*}
We furthermore notice that
\begin{equs}
 I_{1,1} &
 = - \EE \int_{t,x,y} \int_{\tu}^{u} \int_{\tu}^{r} \eta_\delta''(r - \tr) \mathfrak{a}(r)^2  \dd {\tr} \dd r \, \partial_{x_iy_i}^2 \phi_\eps \\
 & = - \EE \int_{\tu\leq u} \int_{\tu}^{u} \int_{\tu}^{u} 
 \mathbf{1}_{\{\tr \leq {r}\}} \eta_\delta''(r - {\tr}) \mathfrak{a}(r)^2 \dd {\tr} \dd r \, \partial_{x_iy_i}^2 \phi_\eps\\
 & \quad - \EE \int_{\tu\geq u} \int_{\tu}^{u} \int_{\tu}^{u} 
 \mathbf{1}_{\{\tr\geq r\}} \eta_\delta''(r - \tr) \mathfrak{a}(r)^2  \dd {\tr} \dd r \, \partial_{x_iy_i}^2 \phi_\eps.
\end{equs}
Similarly we have that $I_2 = I_{2,1} + I_{2,2}$ where 
\begin{equs}
 I_{2,1} & := - \EE \int_{\tu\leq  u} \int_{\tu}^{ u} \int_{\tu}^{ u} 
 \mathbf{1}_{\{\tr \leq r \}} \eta_\delta^{\prime \prime}(r - \tr) \tfra(\tr)^2 \dd r \dd \tr \, \partial_{y_ix_i}^2 \phi_\eps\\
 & \quad - \EE \int_{\tu\geq  u} \int_{\tu}^{ u} \int_{\tu}^{ u} 
 \mathbf{1}_{\{\tr \geq r \}} \eta_\delta^{\prime \prime}(r - \tr) \tfra(\tr)^2 \dd r \dd \tr \, \partial_{y_ix_i}^2 \phi_\eps
\end{equs}
and
\begin{equs}
 I_{2,2} & := \EE \int_{t,x,y} [\eta_\delta^{\prime}(\cdot - u) \tfra, u](\tu) \varphi(t) \partial_{y_i}\left(\partial_{x_i} 
 \psi(x) \varrho_\eps(x-y)\right).
\end{equs}
For $I_3$, as in \cite[proof of Theorem 4.1]{DGG19}, we have the bound
\begin{equs}
  I_3 & \leq  2 \EE \int_{\tu\leq  u} \int_{\tu}^{ u} \int_{\tu}^{ u} \mathbf{1}_{\{\tr\leq r\}} \eta_\delta^{\prime \prime}(r - \tr) \tfra(\tr) \fra(r)  \dd \tr \dd r \,  \partial_{x_iy_i}^2\phi_\eps \\
 & \quad + 2 \EE \int_{\tu\geq  u} \int_{\tu}^{ u} \int_{\tu}^{ u}\mathbf{1}_{\{\tr\geq r\}} \eta_\delta^{\prime \prime}(r - \tr) \tfra(\tr) \fra(r )  \dd \tr \dd r \, \partial_{x_iy_i}^2\phi_\eps.
\end{equs}
We now add the terms $I_{1,1}$, $I_{2,1}$ and $I_3$ to obtain the estimate
\begin{equs}
 & I_{1,1} + I_{2,1} + I_3\leq \EE \int_{t,x,y} \int_{{\tu}}^{ u} \int_{{\tu}}^{ u} \eta_\delta^{\prime \prime}(r-\tr) 
 |\fra(r) - \tfra(\tr)|^2 \dd \tr \dd r|\partial^2_{x_iy_i}\phi_\eps|. 
\end{equs}
Altogether, the previous estimates imply the bound
\begin{align}
 - \EE \int_{t,x,y} \eta_\delta(u - \tu) \partial_t\phi_\eps & \leq \EE \int_{t,x,y} [\eta_\delta^{\prime}(\cdot - \tu) \fra^2,\tu](u)   \varphi(t) 
 \partial_{x_i} \left(\partial_{x_i}\psi(x) \varrho_\eps(x-y)\right) && (=:I_1') \nonumber 
 \\   
 & \quad + \EE \int_{t,x,y} [\eta_\delta^{\prime}(\cdot - u) \tfra^2,u](\tu) \varphi(t) \partial_{y_i}\left(\partial_{x_i} 
 \psi(x) \varrho_\eps(x-y)\right) && (=:I_2') \nonumber
 \\
 & \quad + \EE \int_{t,x,y} \int_{{\tu}}^{ u} \int_{{\tu}}^{ u} \eta_\delta''(r-\tr) 
 |\fra(r) - \tfra(\tr)|^2 \dd \tr \dd r |\partial^2_{x_iy_i}\phi_\eps| && (=:I_3') \nonumber 
 \\
 & \quad + \EE \int_{t,x,y} \frac{1}{2}\eta_\delta^{\prime \prime}(u - \tu)\phi_\eps |\sigma(x,u)-\tsigma(y,\tu)|^2_{\ell^2}. && (=:I_4')
 \label{eq:int_doubling_theta=0_grouped}
\end{align}

For the term on the left hand side of \eqref{eq:int_doubling_theta=0_grouped} we have that
\begin{equs}
 \left|\int_{t,x,y} (\eta_\delta(u- \tu) - |u - \tilde u|) \partial_t\phi_\eps \right| \lesssim \delta,
\end{equs}
since $|\eta_\delta(\cdot) - |\cdot||\lesssim \delta$. 

For the terms $I_1'$ and $I_2'$ we have that 
\begin{align*}
 I_1' & =  \EE \int_{t,x,y}[\eta_\delta^{\prime}(\cdot - \tu) \fra^2,\tu](u)   \varphi(t) \Delta \psi(x) \varrho_\eps(x-y) && (=:I_{1,1}')  \\
 & \quad + \EE \int_{t,x,y} [\eta_\delta^{\prime}(\cdot - \tu) \fra^2,\tu](u)   \varphi(t) \partial_{x_i}\psi(x)
 \partial_{x_i}(\varrho_\eps(x-y)) && (=:I_{1,2}')
\end{align*}
and
\begin{equs}
 I_2' & =\EE \int_{t,x,y} [\eta_\delta^{\prime}(\cdot - u) \tfra^2,u](\tu) \varphi(t) \partial_{x_i}\psi(x) 
 \partial_{y_i}(\varrho_\eps(x-y)).
\end{equs}

The term $I_{1,1}'$ can be written as
\begin{align*}
 I_{1,1}' & = \EE \int_{t,x,y} \int_{\tu}^{ u} \sgn(r - \tu) \fra(r)^2 \dd r \, \varphi(t) \Delta
 \psi(x) \varrho_\eps(x-y) \\
 & \quad + \EE \int_{t,x,y} \int_{\tu}^{ u} \left(\eta_\delta'(r- \tu)-\sgn(r - \tu)\right) \fra(r)^2 \dd r \, \varphi(t) \Delta
 \psi(x) \varrho_\eps(x-y) \\
 & = \EE \int_{t,x,y} \int_{\tu}^{ u} \sgn(r- \tu) \fra(r)^2 \dd r \,  \varphi(t)  
 \Delta
 \psi(x) \varrho_\eps(x-y) && (=:I_{1,1,1}') \\
 & \quad + \EE \int_{t,x,y} \int_{\tu}^{ u} \mathbf{1}_{|r- \tu|\leq \delta}\left(\eta_\delta'(r- \tu)-\sgn(r - \tu)\right) \fra(r)^2 \dd r \varphi(t) \Delta
 \psi(x) \varrho_\eps(x-y). && (=:I_{1,1,2}')
\end{align*}
For the term $I_{1,1,2}'$ using the boundedness of $ \Delta\psi$, the fact that $\int_x \varrho_\eps(x-y) \lesssim 1$ 
and Assumption \ref{as:A}-\ref{as:A first} we get that
\begin{equs}
 |I_{1,1,2}'| & \lesssim \EE \int_{t,x,y} \int_{|r -\tu|\leq \delta} \fra(r)^2 \dd r \varphi(t) \left| \Delta
 \psi(x)\right| \varrho_\eps(x-y)  \le \delta \, \EE \int_{t,x,y} \sup_{|r-\tu|\leq \delta} \fra(r)^2 \, \varphi(t) \varrho_\eps(x-y) \\
 &  \le  \delta \ \EE \int_{t,y} \left(\int_0^{\delta+|\tu |} \fra'(r) \dd r \right)^2 \varphi(t) \le \delta  \ \EE\left(1 + \|\tilde u\|_{L^m_{t,x}}^m\right).
 \end{equs}

% The term $I_{1,1,3}$ can be bounded similarly, but using in addition the Lipschitz continuity of $\partial_{y_iy_i}^2\psi$, by
% %
% \begin{align*}
%  %
%  |I_{1,1,3}| & \lesssim \delta \EE \int_{t,y} \left(\int_0^{\delta+|\tilde u(t,y)|} \mathfrak{a}'_1(\zeta) \dd \zeta\right)^2 \varphi(t) \partial_{y_iy_i}^2\psi(y) \int_x |x-y| \varrho_\eps(x-y) \\
%  & \lesssim \eps\delta \EE\left(1 + \|\tilde u\|_{L^m(Q_T)}^m\right).
%  %
% \end{align*}
% %

The term $I_{1,2}'$ can be written as
\begin{align*}
 I_{1,2}'& = \EE \int_{t,x,y} \int_{\tu}^{ u} \sgn(r - \tu)\fra(r)^2 \dd r \varphi(t) 
 \partial_{x_i}\psi(x) \partial_{x_i} (\varrho_\eps(x-y)) \\
 & \quad + \EE \int_{t,x,y} \int_{\tu}^{ u} \left(\eta_\delta'(r - \tu) - \sgn(r - \tu\right))\fra(r)^2
 \dd r \varphi(t) \partial_{x_i}\psi(x)\partial_{x_i}(\varrho_\eps(x-y)) \\
 & = \EE \int_{t,x,y} \int_{\tu}^{ u} \sgn(r - \tu)\fra(r)^2 \dd r \varphi(t) 
 \partial_{x_i}\psi(x) \partial_{x_i} (\varrho_\eps(x-y)) && (=:I_{1,2,1}') \\
 & \quad + \EE \int_{t,x,y} \int_{\tu}^{ u} \mathbf{1}_{|r - \tu|\leq \delta} \left(\eta_\delta'(r - \tu) - \sgn(r - \tu)\right) \mathfrak{a}(r)^2
 \dd r \varphi(t) \partial_{x_i}\psi(x) \partial_{x_i}(\varrho_\eps(x-y)) && (=:I_{1,2,2}').
\end{align*}
Similar calculations as in the case of  $I_{1,1,2}'$, but now using that $\int_x |\partial_{x_i}(\varrho_\eps(x-y))]| \lesssim \eps^{-1}$
imply that
\begin{equs}
 & I_{1,2,2}^{\prime} \le \delta \eps^{-1} \EE\left(1 + \|\tilde u\|_{L^m_{t,x}}^m\right).
\end{equs}
Putting these estimates together gives 
\begin{equs}
 I^{\prime}_1 & \leq \EE \int_{t,x,y} \int_{\tu}^{ u} \sgn(r- \tu) \fra(r)^2\dd r \, \varphi(t)  
 \Delta\psi(x) \varrho_\eps(x-y) \\
 & \quad + \EE \int_{t,x,y} \int_{\tu}^{ u} \sgn(r- \tu) \fra(r)^2\dd r \, \varphi(t) 
 \partial_{x_i}\psi(x) \partial_{x_i} (\varrho_\eps(x-y)) \nonumber + C \delta \eps^{-1} \EE\left(1 + \|\tilde u\|_{L^m_{t,x}}^m\right). 
 \label{eq:estimate-I_1'}
\end{equs}
We also have
\begin{align*}
 I_{2}^{\prime} & = \EE \int_{t,x,y} \int_{ u}^{\tu} \sgn(\tr-  u) \tfra(\tr)^2\dd \tr \, \varphi(t) 
 \partial_{x_i}\psi(x) \partial_{y_i} (\varrho_\eps(x-y)) && (=:I_{2,1}^{\prime})\\
 & \quad + \EE \int_{t,x,y} \int_{ u}^{\tu} \mathbf{1}_{|\tr-  u|\leq \delta} \left(\eta_\delta^{\prime}(\tr-  u) - \sgn(\tr-  u) \right) \tfra(\tr)^2
 \dd\tr \, \varphi(t) \partial_{x_i}\psi(x)\partial_{y_i}(\varrho_\eps(x-y)) && (:=I_{2,2}^{\prime})
\end{align*}
and as before
% %
% \begin{equs}
%  %
%  & I_{2,2}^{\prime} \lesssim \delta \eps^{-1} \EE\left(1 + \|u\|_{L^m_{t,x}}^m\right).
% %
% \end{equs}
% %
%
\begin{equs}
 I_2' & \leq \ \EE \int_{t,x,y} \int_{u}^{\tu} \sgn(\tr - u) \tfra(\tr)^2 \dd \tr \,  \varphi(t) 
 \partial_{x_i}\psi(x) \partial_{y_i} (\varrho_\eps(x-y)) + C \delta \eps^{-1} \EE\left(1 + \|u\|_{L^m_{t,x}}^m\right).  \label{eq:estimate-I_2'}
\end{equs}
The term $I_3'$  can be treated exactly as in 
\cite[proof of Theorem 4.1]{DGG19}, to obtain
\begin{equs}
 I_3' & \lesssim \eps^{-2} (\delta^{2\al}+\lambda^2) \EE\left(1+\|u\|_{L^m_{t,x}}^m + \|\tilde u\|_{L^m_{t,x}}^m\right) \\
 & \quad + \eps^{-2} \left(\EE\|\mathbf{1}_{|u|\geq R_\lambda} (1+|u|)\|_{L^m_{t,x}}^m
 + \EE\|\mathbf{1}_{|\tilde u|\geq R_\lambda} (1+|\tilde u|)\|_{L^m_{t,x}}^m\right). \label{eq:estimate-I_3'}
\end{equs}
For the term $I_4'$ we notice that
\begin{equs}
 I_4' & \le \EE \int_{t,x,y} \eta_\delta''( u  -  {\tu} )\phi_\eps\left(  |\sigma(x, u ) - \sigma(x, {\tu} )|^2_{\ell^2} + |\sigma(x, u ) - \sigma(y, {\tu} )|^2_{\ell^2} +|\sigma(y, {\tu} ) - \tsigma(y, {\tu} )|^2_{\ell^2}\right) 
 \\
 & \lesssim \left(\delta^{2\kappa} + \delta^{-1} \eps^{2\bar\kappa} + \delta^{-1} d(\sigma,\tsigma)\right)
 \EE\left(1 + \|u\|_{L^{m+1}_{t,x}}^{m+1} + \|\tilde u\|_{L^{m+1}_{t,x}}^{m+1}\right)\label{eq:estimate-I_4'}     
\end{equs}
where we use Assumption \ref{as:noise} and the fact that $|\eta_\delta''|\lesssim \delta^{-1}$.

Noting that $\partial_{y_i} (\varrho_\eps(x-y)) = - \partial_{x_i} (\varrho_\eps(x-y))$ and that for every $u, \tilde u$ and 
non-decreasing differentiable function $A$ we have the identity
\begin{equs}
 \int_{\tilde u}^u \sgn(r - \tilde u) A'(r) \, dr = |A(u) - A(\tilde u)|,
\end{equs}
we obtain by \eqref{eq:estimate-I_1'}-\eqref{eq:estimate-I_4'} (and the fact that $\delta \leq \delta^{2\kappa}$)
\begin{equs}
 & -\EE\int_0^T \int_{Q^2} | u- \tu| \partial_t\varphi(t) \psi(x) \varrho_\eps(x-y) \dd x \dd y \dd t  
 \\
 & \quad \leq \EE \int_0^T \int_{Q^2} |A( u) - A( \tu)| \varphi(t) \Delta \psi(x)\varrho_\eps(x-y) \dd x \dd y \dd t  
 \\
 & \quad \quad + \EE \int_0^T \int_{Q^2} |A( u) - A( \tu)| \varphi(t) \partial_{x_i}\psi(x) \partial_{x_i}(\varrho_\eps(x-y)) \dd x \dd y \dd t 
 \\ 
 & \quad \quad - \EE \int_0^T \int_{Q^2} |\tA( u) -\tA( \tu)| \varphi(t) \partial_{x_i}\psi(x) \partial_{x_i}(\varrho_\eps(x-y)) \dd x \dd y \dd t
 \\
 & \quad \quad + C \eps^{-2} \EE\left(\|\mathbf{1}_{|u|\geq R_\lambda} (1+|u|)\|_{L^m_{t,x}}^m 
 + \|\mathbf{1}_{|\tilde u|\geq R_\lambda} (1+|\tilde u|)\|_{L^m_{t,x}}^m\right) 
 \\
 & \quad \quad + C \left(\mathcal{G}_\alpha(\delta, \eps, \lambda) + \delta^{-1} d(\sigma,\tsigma)\right)  \EE\left(1+\|u\|_{L^{m+1}_{t,x}}^{m+1}+\|\tilde u\|_{L^{m+1}_{t,x}}^{m+1} \right). \label{eq:almost-done}             
\end{equs}
By the triangle inequality we have that
\begin{equs}
 & \EE \int_0^T \int_{Q^2} ( |A(u) - A( \tu)|-|\tA(u) - \tA( \tu)|)   \varphi(t) \partial_{x_i}\psi(x) \partial_{x_i}(\varrho_\eps(x-y)) \dd x \dd y \dd t  
 \\
 & \quad \leq \EE \int_0^T \int_{Q^2} (|A(u) - \tA(u)|+|A( \tu) - \tA( \tu)|)  \varphi(t) \left|\partial_{x_i}\psi(x) \partial_{x_i}(\varrho_\eps(x-y))\right|  \dd x \dd y \dd t 
 \\
 & \quad \lesssim \eps^{-1} \lambda \EE\left(\|u\|_{L^{\frac{m+1}{2}}_{t,x}}^{\frac{m+1}{2}} + \|\tilde u\|_{L^{\frac{m+1}{2}}_{t,x}}^{\frac{m+1}{2}}\right)
 + \eps^{-1} \EE\left( \|\mathbf{1}_{|u|\geq R_\lambda}u\|_{L^m_{t,x}}^m + \|\mathbf{1}_{|\tilde u|\geq R_\lambda}\tilde u\|_{L^m_{t,x}}^m\right) \label{eq:almost-done-2}
\end{equs}
where in the last step we use that $\int_y |\partial_{x_i}(\varrho_\eps(x-y))|,\int_x |\partial_{x_i}(\varrho_\eps(x-y))| \lesssim \eps^{-1}$ and that for every $r\in \RR$, by Assumption \ref{as:A}-\ref{as:A first},
\begin{align*}
 |A(r) - \tA(r)| & \leq  \int_0^{|r|} |\fra(\zeta)^2 - \tfra(\zeta)^2| \dd  \zeta \lesssim  \lambda \int_0^{|r|} \left(|\fra(\zeta)| + |\tfra(\zeta)| \right) \dd \zeta + \mathbf{1}_{|r|\geq R_\lambda} 
 \int_0^{|r|}\left(\fra(\zeta)^2 + \tfra(\zeta)^2\right) \dd \zeta \\
 & \lesssim \lambda |r|^{\frac{m+1}{2}} + \mathbf{1}_{|r|\geq R_\lambda} |r|^m.
\end{align*}
Similarly, since $\int_x |\varrho_\eps(x-y)| \lesssim 1$, we see that
\begin{equs}
 & \EE \int_0^T \int_{Q^2} \left(|A( u) - A( \tu)| - |A( u) - \tA(\tu)|\right) \varphi(t) \Delta \psi(x)\varrho_\eps(x-y) \dd x \dd y \dd t \\ 
 & \quad \lesssim \lambda \EE\left(\|\tilde u\|_{L^{\frac{m+1}{2}}_{t,x}}^{\frac{m+1}{2}}\right)
 + \EE\left(\|\mathbf{1}_{|\tilde u|\geq R_\lambda}\tilde u\|_{L^m_{t,x}}^m\right). \label{eq:almost-done-3}
\end{equs}
By \eqref{eq:almost-done}-\eqref{eq:almost-done-3} we obtain the desired inequality. 
\end{proof}

The next lemma is a point-wise in time version of Proposition \ref{prop:basic_est}.

\begin{lemma} \label{lem:almost_final} Let $u, \tu$ be entropy solutions of the Dirichlet problems $\mathcal{E}(A,\sigma,\xi)$, $\mathcal{E}(\tA,\tsigma,\txi)$, 
where the data satisfy Assumptions \ref{as:A} and \ref{as:noise}, and assume that $u$ satisfies the $(\star)$-property with coefficient $\sigma$ (see Definition \ref{def:star_prop}).
Then, for every $\psi \in C^\infty_c(Q)$ and $\al\in(0,\frac{1}{2})$ there exists $C\equiv C(\mathcal{S}, \alpha, \psi)>0$ such that for every
$\lambda,\eps,\delta \in (0,1)$ and every right Lebesgue points $s\leq t$ of the mapping
\begin{equation}
 \tau \mapsto \EE \int_{x,y} |u(\tau,x) - \tilde u(\tau,y)| \psi(x) \varrho_\eps(x-y). \label{eq:leb_points}
\end{equation}
we have that
\begin{equs}
 & \EE \int_{Q^2} |u(t,x) - \tu(t,y)| \psi(x) \varrho_\eps(x-y) \dd x \dd y \\
 & \quad \leq \EE \int_{Q^2} |u(s,x) - \tu(s,y)| \psi(x) \varrho_\eps(x-y) \dd x \dd y
 \\
 & \quad \quad + \EE \int_s^t \int_{Q^2} |A(u(\tau,x)) - \tA(\tu(\tau,y))| \Delta\psi(x) \varrho_\eps(x-y)\dd x \dd y \dd \tau 
 \\
 & \quad \quad + C \eps^{-2} \EE\left(\|\mathbf{1}_{|u|\geq R_\lambda} (1+|u|)\|_{L^m_{t,x}}^m 
 + \|\mathbf{1}_{|\tu|\geq R_\lambda} (1+|\tu|)\|_{L^m_{t,x}}^m\right) 
 \\
 & \quad \quad + C \left(\mathcal{G}_\alpha(\delta, \eps, \lambda) + \delta^{-1} d(\sigma,\tsigma)\right) 
 \EE\left(1+\|u\|_{L^{m+1}_{t,x}}^{m+1}+\|\tu\|_{L^{m+1}_{t,x}}^{m+1}\right), \label{eq:almost_final}
\end{equs}
where $R_\lambda = \sup\{R\in[0,\infty]: |\fra(r) - \tfra(r)| \leq \lambda, \text{ for every } |r|\leq R\}$ and $\mathcal{G}_\alpha$ 
as in \eqref{eq:def-G}.
\end{lemma}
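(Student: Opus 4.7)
The strategy is to upgrade Proposition \ref{prop:basic_est} from an inequality weighted by a smooth temporal test function $\varphi\in C^\infty_c((0,T))$ to a pointwise-in-time estimate by choosing $\varphi$ to approximate the indicator $\mathbf{1}_{[s,t]}$. The right Lebesgue point hypothesis allows us to identify the limiting boundary contributions with the actual pointwise values of the function in \eqref{eq:leb_points}.

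Concretely, pick a standard mollifier $\rho_\eta$ supported in $(0,\eta)$ with $\int \rho_\eta = 1$, and for $\eta\in(0,\tfrac{T-t}{2})$ define
\begin{equs}
\varphi_\eta(\tau) := \int_{-\infty}^\tau \bigl(\rho_\eta(\tau' - s) - \rho_\eta(\tau' - t)\bigr)\,\mathrm d\tau'.
\end{equs}
Then $\varphi_\eta\in C^\infty_c((0,T))$ with $0\leq \varphi_\eta \leq 1$, $\varphi_\eta\equiv 1$ on $[s+\eta,t]$ and $-\partial_\tau \varphi_\eta(\tau) = \rho_\eta(\tau - t) - \rho_\eta(\tau - s)$. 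I would first verify that the $\varphi$-dependence of the constant $C$ in Proposition \ref{prop:basic_est} is controlled solely through $\|\varphi\|_\infty$ and the supports of $\varphi, \varphi'$ staying compactly inside $(0,T)$, both of which are uniform in $\eta$. Granted this, applying the proposition with $\varphi_\eta$ yields an inequality whose right-hand side (apart from the $|A(u)-\tA(\tu)|\Delta\psi$ integral) does not involve $\varphi_\eta$ at all, so those terms pass unchanged to the conclusion.

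Next I would pass to the limit $\eta\to 0^+$. For the left-hand side,
\begin{equs}
-\EE\int_0^T\!\!\int_{Q^2}\!|u(\tau,x)-\tu(\tau,y)|\,\partial_\tau\varphi_\eta(\tau)\psi(x)\varrho_\eps(x-y)
= \int_t^{t+\eta}\!\!\rho_\eta(\tau-t)f(\tau)\,\mathrm d\tau - \int_s^{s+\eta}\!\!\rho_\eta(\tau-s)f(\tau)\,\mathrm d\tau,
\end{equs}
where $f(\tau):=\EE\int_{x,y}|u(\tau,x)-\tu(\tau,y)|\psi(x)\varrho_\eps(x-y)$. Since $s,t$ are right Lebesgue points of $f$, these two terms converge to $f(t) - f(s)$, giving the first two lines of \eqref{eq:almost_final}. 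For the $|A(u)-\tA(\tu)|\Delta\psi\,\varrho_\eps$ term, $\varphi_\eta\to \mathbf{1}_{[s,t]}$ boundedly and pointwise a.e. in $\tau$, so dominated convergence (using $|A(u)|+|\tA(\tu)|\lesssim 1+|u|^m+|\tu|^m\in L^{(m+1)/m}_{\omega,t,x}$, which follows from Assumption \ref{as:A} and $u,\tu\in L^{m+1}_{\omega,t,x}$) yields the $\int_s^t$ integral.

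The main technical point — and the only place one should be careful — is the first one: ensuring that the constant in Proposition \ref{prop:basic_est} depends on $\varphi$ only through bounds that are uniform across the family $\{\varphi_\eta\}$. Inspecting the estimates in the proof of Proposition \ref{prop:basic_est} (in particular \eqref{eq:estimate-I_1'}--\eqref{eq:estimate-I_4'}) shows that $\varphi$ enters exclusively as a multiplicative factor bounded by $\|\varphi\|_\infty\leq 1$ inside space integrals, so this uniformity is automatic. The remaining terms in \eqref{eq:almost-done} are independent of $\varphi$ and hence transfer verbatim. Assembling these ingredients delivers \eqref{eq:almost_final}.
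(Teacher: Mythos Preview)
Your proposal is correct and follows exactly the approach the paper indicates: approximate $\mathbf{1}_{[s,t]}$ by smooth $\varphi_\eta$, apply Proposition \ref{prop:basic_est}, and pass to the limit using the right Lebesgue point hypothesis. The paper's own proof is the single sentence ``approximating the function $\mathbf{1}_{[s,t]}$ and using Proposition \ref{prop:basic_est}'' with a reference to \cite{DGG19}, so you have in fact supplied more detail than the paper itself, including the useful observation that the constant in Proposition \ref{prop:basic_est} depends on $\varphi$ only through $\|\varphi\|_\infty$.
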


\begin{proof} The proof is given in \cite[Proof of Theorem 4.1]{DGG19}, by approximating the function $\mathbf{1}_{[s,t]}$ and using Proposition \ref{prop:basic_est}.
\end{proof}

We are now ready to prove the main result of this section.

\begin{lemma} \label{lem:final_psi}  Let $u, \tu$ be entropy solutions of the Dirichlet problems $\mathcal{E}(A,\sigma,\xi)$, $\mathcal{E}(\tA,\tsigma,\txi)$, 
where the data satisfy  Assumptions \ref{as:A} and \ref{as:noise}, and assume that $u$ satisfies the $(\star)$-property with coefficient $\sigma$ (see Definition \ref{def:star_prop}).
\begin{enumerate}
 \item \label{it:final_psi_1} For every $\psi \in C^\infty_c(Q)$ and $\al\in(0,\frac{1}{2})$ there exists $C\equiv C(\mathcal{S}, \alpha, \psi)>0$ such that for every
 $\lambda,\delta \in (0,1)$ and $\eps\in(0,1)$ sufficiently small
 \begin{align*}
  & \EE\int_0^T \int_Q |u(t,x) - \tu(t,x)| \psi(x) \dd x \dd t \\
  & \quad \leq T \EE\int_Q |\xi(x) - \txi(x)| \psi(x) \dd x + T \sup_{|h|\leq \eps}\EE \int_Q |\txi(x) - \txi(x+h)| \psi(x) \dd x \\
  & \quad \quad + \EE \int_0^T \int_0^t \int_Q |A(u(\tau,x))- \tA(\tu(\tau,x))| \Delta\psi(x)\dd x \dd \tau \dd t 
  + C \eps^{\frac{2}{m+1}} \EE\left( 1 + \|\nabla [\tfra](\tu)\|_{L^1_{t,x}}\right) \\
%   & \quad\quad  + \EE \int_0^T \int_{Q^2} \frac{1}{2}|A_1(u(t,x;\xi_1)) - A_1(u(t,y;\xi_2))| \partial_{x_iy_i}^2\psi\left(\frac{x+y}{2}\right) \varrho_\eps(x-y) \dd x \dd y \dd t  \\
%   & \quad \quad + \EE \int_0^T \int_{Q^2} \frac{1}{2}|A_2(u(t,x;\xi_1)) - A_2(u(t,y;\xi_2))| \partial_{y_ix_i}^2\psi\left(\frac{x+y}{2}\right) \varrho_\eps(x-y) \dd x \dd y \dd t  \\
  & \quad \quad  + C \eps \EE \|\nabla \tA(\tu)\|_{L^1_{t,x}} + C \eps^{-2} \EE\left(\|\mathbf{1}_{|u|\geq R_\lambda} (1+|u|)\|_{L^m_{t,x}}^m 
  + \|\mathbf{1}_{|\tu|\geq R_\lambda} (1+|\tu|)\|_{L^m_{t,x}}^m\right) 
  \\
  & \quad \quad + C \left(\mathcal{G}_\alpha(\delta, \eps, \lambda) + \delta^{-1} d(\sigma,\tsigma)\right) 
  \EE\left(1+\|u\|_{L^{m+1}_{t,x}}^{m+1}+\|\tu\|_{L^{m+1}_{t,x}}^{m+1}\right).
 \end{align*}
 where $R_\lambda = \sup\{R\in[0,\infty]: |\fra(r) - \tfra(r)| \leq \lambda, \text{ for every } |r|\leq R\}$ and $\mathcal{G}_\alpha$ as in
 \eqref{eq:def-G}.
 \item \label{it:final_psi_2} If we furthermore assume that $\tA = A$ and $\tsigma = \sigma$, then for every $\psi\in C_c^\infty(Q)$ and almost every 
 $s<t\leq T$ we have that
 \begin{align*}
  & \EE \int_Q |u(t,x) - \tilde u(t,x)| \psi(x) \dd x - \EE \int_Q |u(s,x) - \tilde u(s,x)| \psi(x) \dd x \\
  & \quad \leq \EE\int_s^t \int_Q |A(u(\tau,x)) - A(\tu(\tau,x))| \Delta \psi(x) \dd x \dd \tau.
 \end{align*}
 In addition, this estimate holds for $s=0$ and $u(s)$, $\tu(s)$ replaced by $\xi$, $\txi$. 
\end{enumerate}
\end{lemma}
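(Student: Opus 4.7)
The plan is to obtain both statements from Lemma \ref{lem:almost_final} by (a) removing the $y$-mollification $\varrho_\eps(x - y)$ via a triangle-inequality trick and (b) driving the initial time $s$ down to $0$, using the initial-condition term built into Definition \ref{def:entr_sol}.\ref{item:entropies}.

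For Part \ref{it:final_psi_1} I would apply Lemma \ref{lem:almost_final} at two right Lebesgue points $0 < s \leq t$ of \eqref{eq:leb_points}, integrate the resulting inequality in $t \in (s, T)$, and then pass to the limit $s \to 0^+$. The convergence
\begin{equs}
\EE \int_{Q^2} |u(s, x) - \tu(s, y)| \psi(x) \varrho_\eps(x-y) \dd x \dd y \to \EE \int_{Q^2} |\xi(x) - \txi(y)| \psi(x) \varrho_\eps(x-y) \dd x \dd y
\end{equs}
as $s \to 0^+$ is identified from the $\int \eta(\xi)\phi(0)$ term in the entropy formulation. Next, the triangle inequality
\begin{equs}
\int_Q |u(t,x) - \tu(t,x)| \psi(x) \dd x \leq \int_{Q^2} |u(t,x) - \tu(t,y)| \psi(x) \varrho_\eps(x-y) \dd x \dd y + E_\eps(t)
\end{equs}
introduces the diagonal, with translation error $E_\eps(t) := \int_{Q^2} |\tu(t,x) - \tu(t,y)| \psi(x) \varrho_\eps(x-y) \dd x \dd y$. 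I would split $E_\eps(t)$ over $\Gamma(t) = \{(x,y) : |\tu(t,x)| \vee |\tu(t,y)| \geq 1\}$ and $\Gamma(t)^c$ and apply \eqref{eq:as Psi}. On $\Gamma$, the bound $K\tfra \geq 1$ for $|r| \geq 1$ yields $|\tu(t,x) - \tu(t,y)| \lesssim |\tA(\tu)(t,x) - \tA(\tu)(t,y)|$, producing the $\eps\,\|\nabla \tA(\tu)(t)\|_{L^1_x}$ contribution; on $\Gamma^c$, \eqref{eq:as Psi} gives $|\tu(t,x) - \tu(t,y)| \lesssim |[\tfra](\tu)(t,x) - [\tfra](\tu)(t,y)|^{2/(m+1)}$, and H\"older's inequality combined with the Young bound $a^{2/(m+1)} \leq 1 + a$ produces the $\eps^{2/(m+1)} (1 + \|\nabla [\tfra](\tu)(t)\|_{L^1_x})$ contribution. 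The initial data term is handled by the same triangle decomposition, yielding the $\sup_{|h| \leq \eps} \EE \int |\txi(\cdot) - \txi(\cdot + h)| \psi$ translation error.

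For Part \ref{it:final_psi_2}, with $\tA = A$ and $\tsigma = \sigma$, I would apply Lemma \ref{lem:almost_final} with $\lambda = 0$. This forces $R_\lambda = \infty$ (so the indicator terms vanish) and $d(\sigma, \tsigma) = 0$, leaving only $C \mathcal{G}_\alpha(\delta, \eps, 0)\cdot \EE(1+\|u\|_{L^{m+1}_{t,x}}^{m+1}+\|\tu\|_{L^{m+1}_{t,x}}^{m+1})$ as a regularization error. Sending $\eps \to 0$ with $\delta = \delta(\eps)$ chosen (e.g.\ $\delta = \eps^p$ for $p$ in the window $(\max(1,1/\alpha),\,2\bar\kappa)$) so that $\mathcal{G}_\alpha(\delta, \eps, 0) \to 0$ --- which exploits the sharper form of the $I_3^{\prime}$-estimate available when $\fra = \tfra$, since $|\fra(r)-\fra(\tr)|^2$ is small on the support of $\eta_\delta^{\prime\prime}$ --- kills the error. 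At right Lebesgue points of \eqref{eq:leb_points}, the $y$-mollified LHS and the $s$-term converge to their diagonal counterparts by $L^1$-continuity of translation, and the $|A(u) - A(\tu)|\,\Delta\psi\,\varrho_\eps$ term converges using $A(u), A(\tu) \in L^2_{\omega,t} H^1_{0;x}$. The extension to $s=0$ is obtained by the same $s \to 0^+$ limit as in Part \ref{it:final_psi_1}.

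The main obstacles are (i) the sharp splitting of the translation error in Part \ref{it:final_psi_1}, which is what produces the two distinct $\eps$-exponents via the dichotomy in \eqref{eq:as Psi}, and (ii) identifying the $s \to 0^+$ limit through the initial-condition term of the entropy inequality. For Part \ref{it:final_psi_2} the delicate point is the joint tuning of $\delta$ and $\eps$ so that the regularization error vanishes without destroying the convergence of the mollified quantities to their diagonal limits.
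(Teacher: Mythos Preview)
Your overall strategy is the paper's: both parts are derived from Lemma \ref{lem:almost_final}, the limit $s\to 0$ is taken via the initial-condition Lebesgue-point property (the paper cites \cite[Lemma 3.2]{DGG19} for this), and the $y$-mollification is removed by triangle inequality plus gradient bounds. For Part \ref{it:final_psi_2} your scaling $\delta=\eps^p$ is exactly the paper's $\delta=\eps^{2\nu}$, $\nu\in((m\wedge 2)^{-1},\bar\kappa)$.

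There is, however, a misattribution in Part \ref{it:final_psi_1} that leaves a step missing. The term $C\eps\,\EE\|\nabla\tA(\tu)\|_{L^1_{t,x}}$ does \emph{not} come from the $\Gamma$-part of the translation error $E_\eps$; it arises when you de-mollify the right-hand-side term $\int|A(u(\tau,x))-\tA(\tu(\tau,y))|\,\Delta\psi(x)\,\varrho_\eps(x-y)$ of Lemma \ref{lem:almost_final} (via the mean value theorem for $\tA(\tu(\tau,\cdot))$) to obtain the unmollified $\int|A(u(\tau,x))-\tA(\tu(\tau,x))|\,\Delta\psi(x)$ appearing in the statement. Your sketch never performs this replacement. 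The de-mollification of $|u-\tu|\psi$ itself (handled in the paper by quoting \cite[Lemma 3.1]{DGG19}) contributes only the $\eps^{2/(m+1)}\bigl(1+\|\nabla[\tfra](\tu)\|_{L^1_{t,x}}\bigr)$ term. Relatedly, your $\Gamma$-bound $|\tu(x)-\tu(y)|\lesssim|\tA(\tu)(x)-\tA(\tu)(y)|$ is not justified by ``$K\tfra\geq 1$ on $\{|r|\geq 1\}$'' alone: one of $\tu(x),\tu(y)$ may lie in $(-1,1)$ where $\tfra$ can degenerate, and \eqref{eq:as Psi} gives control by $|[\tfra](\tu)(x)-[\tfra](\tu)(y)|$ there, not by $|\tA(\tu)(x)-\tA(\tu)(y)|$.
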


\begin{proof}[Proof of Lemma \ref{lem:final_psi}-\ref{it:final_psi_1}] We first notice that for all $\varepsilon \in(0,1)$ sufficiently small by the mean value
theorem for $\tA(\tu(\tau,\cdot))$ we have that 
\begin{equs}
 & \EE \int_s^t \int_{Q^2} |A(u(\tau,x)) - \tA(\tu(\tau,y))| \Delta\psi(x) \varrho_\eps(x-y)\dd x \dd y \dd \tau \\
 & \quad \leq \EE \int_s^t \int_Q |A(u(\tau,x)) - \tA(\tu(\tau,x))| \Delta\psi(x) \dd x \dd \tau
 + C \eps \EE \|\nabla \tA(\tu)\|_{L^1_{t,x}} \label{eq:grad_est}
\end{equs}
for some $C\equiv C(\mathcal{S},\|\Delta\psi\|_{L^\infty_x})>0$, where we also use that $\int_y |(x-y)\varrho_\eps(x-y)| \lesssim \eps$. Then, by Lemma \ref{lem:almost_final} we have that for every right Lebesgue points $s\leq t$ of the mapping \eqref{eq:leb_points}
\begin{equs}
 \EE \int_{Q^2} |u(t,x) - \tilde u(t,y)| \psi(x)  \varrho_\eps(x-y) \dd x \dd y &
 \leq \EE \int_{Q^2} |u(s,x) - \tilde u(s,y)| \psi(x) \varrho_\eps(x-y) \dd x \dd y \\ 
 & \quad + \EE \int_s^t \int_Q |A(u(\tau,x)) - \tA(\tu(\tau,x))| \Delta\psi(x)\dd x \dd \tau + M, \\ 
 \label{eq:almost_final_M}
\end{equs}
where we write $M\equiv M(\delta,\eps,\lambda)>0$ for the remaining terms on the right hand side of \eqref{eq:almost_final} plus 
$C \eps \EE \|\nabla \tA(\tu)\|_{L^1_{t,x}}$. As in \cite[Lemma 3.2]{DGG19} we can prove that
\begin{equation} \label{eq:ic-is-Lebesgue}
 \lim_{h \to 0} \frac{1}{h} \int_0^h \EE\int_Q |u(s,x) - \xi(x)|^2 \psi(x)^2 \dd x \dd s=0,
\end{equation}
and similarly for $\tilde u$, $\tilde \xi$. Hence, if we integrate  \eqref{eq:almost_final_M} over $s\in(0,h)$, divide by $h$ and let $h \to 0$ we get 
\begin{equs}
 \EE \int_{Q^2} |u(t,x) - \tilde u(t,y)| \psi(x)  \varrho_\eps(x-y) \dd x \dd y &
 \leq \EE \int_{Q^2} |\xi(x) - \txi (y)| \psi(x) \varrho_\eps(x-y) \dd x \dd y \\ 
 & \quad + \EE \int_0^t \int_Q |A(u(\tau,x)) - \tA(\tu(\tau,x))| \Delta\psi(x)\dd x \dd \tau + M. \\ 
\end{equs}
We now integrate the above inequality over $t\in[0,T]$ to obtain
\begin{align*}
 & \EE \int_0^T \int_{Q^2} |u(t,x) - \tilde u(t,y)| \psi(x)  \varrho_\eps(x-y) \dd x \dd y \dd t \\
 & \quad \leq T \EE \int_{Q^2} |\xi(x) - \txi(y)| \psi(x) \varrho_\eps(x-y) \dd x \dd y \\
 & \quad \quad +  \EE \int_0^T \int_0^t  \int_Q |A(u(\tau,x)) - \tA(\tu(\tau,x))| \Delta\psi(x)\dd x \dd \tau \dd t + T M.
\end{align*}
Moreover, for every $\eps\in(0,1)$ sufficiently small such that $x+h\in Q$ whenever $x\in \supp \psi$ and $|h|\leq \eps$, we have that
\begin{align*}
 \EE \int_{Q^2} |\xi(x) - \txi(y)| \psi(x) \varrho_\eps(x-y) \dd x \dd y & \leq \sup_{|h|\leq \eps} \EE\int_Q |\txi(x) - \txi(x+h)| \psi(x) \dd x \\ 
 & \quad + \EE \int_Q |\xi(x) - \txi(x)| \psi(x) \dd x.
\end{align*}
We finally notice that by \cite[Lemma 3.1]{DGG19} 
\begin{align*}
 & \EE \left|\int_0^T\int_{Q^2} |u(t,x) - \tilde u(t,y)| \psi(x) \varrho_\eps(x-y) \dd x \dd y \dd t - \int_0^T \int_Q |u(t,x) - \tilde u(t,x)| \psi(x) \dd x \dd t\right| \\
 & \quad \lesssim \eps^{\frac{2}{m+1}} \left(1+\EE \|\nabla [\tfra](\tilde u)\|_{L^1_{t,x}}\right).
\end{align*}
This implies the assertion.
\end{proof}

\begin{proof}[Proof of Lemma \ref{lem:final_psi}-\ref{it:final_psi_2}] We first notice that since $\tA=A$ we can choose $\lambda =0$ and $R_\lambda=\infty$ in \eqref{eq:almost_final}. Since we also have that
$d(\sigma,\tsigma)=0$, using again \eqref{eq:grad_est}, \eqref{eq:almost_final} reads as follows, 
\begin{align*}
 \EE \int_{Q^2} |u(t,x) - \tilde u(t,y)| \psi(x) \varrho_\eps(x-y) \dd x \dd y & 
 \leq \EE \int_{Q^2} |u(s,x) - \tilde u(s,y)| \psi(x) \varrho_\eps(x-y) \dd x \dd y \\
 & \quad + \EE \int_s^t \int_Q |A(u(\tau,x)) - A(\tilde u(\tau,x))| \Delta\psi(x)\dd x \dd \tau \\
 & \quad +  C \, \mathcal{G}_\alpha(\delta,\eps,0) \,
 \EE\left(1+\|u\|_{L^{m+1}_{t,x}}^{m+1}+\|\tilde u\|_{L^{m+1}_{t,x}}^{m+1}\right) \\
 & \quad + C \eps \EE \|\nabla \tA(\tu)\|_{L^1_{t,x}}
\end{align*}
for almost every $s\leq t\leq T$. Notice that by \eqref{eq:ic-is-Lebesgue}, $\tau=0$ is a right Lebesgue point of \eqref{eq:leb_points}, hence the last inequality 
holds also for $s=0$. As in \cite[proof of Theorem 4.1]{DGG19}, we pass to the limit $\eps,\delta\to 0$ simultaneously by choosing $\delta$ depending on $\eps$. 
More specifically, we choose $\nu \in((m\wedge 2)^{-1},\bar \kappa)$ and $\al< 1 \wedge \frac{m}{2}$ such that $(2\al)(2\nu)>2$ and set $\delta = \eps^{2\nu}$. Letting $\eps\to 0$ 
proves the desired estimate.
\end{proof}

In the next corollary we replace $\psi$ in Lemma \ref{lem:final_psi}-\ref{it:final_psi_2} by $w$ as in \eqref{eq:w} which implies an estimate in $L^1_{w;x}$.

\begin{corollary} \label{cor:contraction} Under the assumptions of Lemma \ref{lem:final_psi}-\ref{it:final_psi_2} we have that for almost every $s< t\leq T$
\begin{equs}
 & \EE\|u(t) - \tilde u(t)\|_{L^1_{w;x}} \leq \EE\|u(s) - \tilde u(s)\|_{L^1_{w;x}} 
 - \EE\int_s^t  \|A(u(\tau)) - A(\tu(\tau))\|_{L^1_x} \dd \tau. \label{eq:contraction_A}
\end{equs}
In addition, the following estimate holds, 
\begin{equs}
 \supess_{t\in[0,T]} \EE \|u(t) - \tu(t)\|_{L^1_{w;x}} \leq \EE \|\xi - \txi\|_{L^1_{w;x}}. \label{eq:contraction}
\end{equs}
\end{corollary}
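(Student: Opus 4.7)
The plan is to apply Lemma \ref{lem:final_psi}-\ref{it:final_psi_2} along a sequence of smooth compactly supported approximations of $w$ and pass to the limit. A natural choice is $\psi_n := w\chi_n$, where $\chi_n \in C^\infty_c(Q)$ is a standard cutoff with $\chi_n \equiv 1$ on $\{x\in Q:\dist(x, \partial Q) > 1/n\}$ and $\|\nabla \chi_n\|_{L^\infty_x} \lesssim n$. Since $w$ is smooth in $Q$, vanishes on $\partial Q$ (so $|w(x)| \lesssim \dist(x, \partial Q)$), and has bounded gradient up to the boundary, a direct computation shows $\psi_n \to w$ both uniformly on $\overline Q$ and in $H^1_{0;x}$, with the boundary-strip terms tamed by the Lipschitz decay of $w$.

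The key step is to rewrite the right-hand side of the inequality from Lemma \ref{lem:final_psi}-\ref{it:final_psi_2} via integration by parts. Since $\psi_n$ is compactly supported and $A(u(\tau)), A(\tu(\tau)) \in H^1_{0;x}$ by Definition \ref{def:entr_sol}-\ref{item:solution-in-spaces}, the function $|A(u(\tau)) - A(\tu(\tau))|$ lies in $H^1_{0;x}$, and
\begin{equs}
\int_Q |A(u) - A(\tu)| \Delta \psi_n \dd x = -\int_Q \nabla |A(u) - A(\tu)| \cdot \nabla \psi_n \dd x.
\end{equs}
Combining $\psi_n \to w$ in $H^1_{0;x}$ with $A(u), A(\tu) \in L^2_{\omega,t} H^1_{0;x}$, Cauchy-Schwarz and Fubini allow passing to the limit in this right-hand side. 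The weak formulation $\int_Q \nabla w \cdot \nabla v \dd x = \int_Q v \dd x$ for $v \in H^1_{0;x}$ (valid since $-\Delta w = 1$ with zero Dirichlet data), applied to $v = |A(u) - A(\tu)|$, then identifies the limit as $-\|A(u(\tau)) - A(\tu(\tau))\|_{L^1_x}$.

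For the left-hand side, uniform convergence $\psi_n \to w$ together with $\EE \|u(\tau) - \tu(\tau)\|_{L^1_x} < \infty$ for a.e. $\tau \in [0,T]$ (which follows from $u, \tu \in L^{m+1}_{\omega,t} L^{m+1}_x$ and H\"older's inequality on the bounded domain $Q$) give convergence of $\EE \int_Q |u(\tau) - \tu(\tau)| \psi_n \dd x$ by dominated convergence. This proves \eqref{eq:contraction_A} for a.e. $s < t \leq T$. Taking $s = 0$ (justified by the extension stated in Lemma \ref{lem:final_psi}-\ref{it:final_psi_2}, together with $\EE\|\xi\|_{L^1_x} < \infty$ from Assumption \ref{as:A}-\ref{as:ic}) and discarding the non-positive dissipation term then yields \eqref{eq:contraction}. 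The main subtlety is that $w \notin C^\infty_c(Q)$, so Lemma \ref{lem:final_psi} cannot be applied directly to $\psi = w$; the integration-by-parts reformulation of the $\Delta \psi_n$ term is what makes the approximation go through cleanly, by reducing the passage to the limit to $H^1_{0;x}$-convergence instead of requiring control of the singular boundary-concentrated pieces of $\Delta(w\chi_n)$.
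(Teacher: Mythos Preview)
Your proof is correct and follows essentially the same approach as the paper. The paper's proof is more terse: it simply picks any sequence of non-negative $\psi_n \in C^\infty_c(Q)$ with $\psi_n \to w$ in $H^1_{0;x}$, invokes $|A(u)-A(\tu)| \in L^2_{\omega,t}H^1_{0;x}$ and the equation $-\Delta w = 1$, and concludes directly; your explicit construction $\psi_n = w\chi_n$ and the spelled-out integration-by-parts step make the same argument with more detail.
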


\begin{proof} We choose a sequence of non-negative functions $\psi_n \in C^\infty_c(Q)$ such that $\psi_n \to w$ in $H^1_{0,x}$. Using that $|A(u)-A(\tu)| \in L^{2}_{\omega, t}H^1_{0,x}$ and that $w$ solves  \eqref{eq:w}, we get by virtue of Lemma
\ref{lem:final_psi}-\ref{it:final_psi_2} that for almost every $s <t\leq T$ (including $s=0$)
\begin{equs}
 \EE \int_Q |u(t,x) - \tilde u(t,x)| w(x) \dd x \leq  \int_Q |u(s,x) - \tilde u(s,x)| w(x) \dd x 
 - \EE\int_s^t \int_Q |A(u(\tau,x)) - A(\tu(\tau,x))| \dd x \dd \tau. 
\end{equs}
which proves the desired estimate.
\end{proof}

\section{Proofs of well-posedenss} \label{s:well-posedness_proof}

\subsection{Proof of Theorems \ref{thm:uniq_exist} and \ref{thm:stability}} \label{s:proof_uniq_exist_stab}

% We start with the following corollary. 
% 
% \begin{corollary} \label{eq:contraction}
% Under the assumptions of Lemma \ref{lem:final_psi} II we have 
% \begin{equation*}
%  %
%  \supess_{t\in[0,T]} \EE \|u(t) - \tu(t)\|_{L^1_{w;x}} \leq \EE \|\xi - \txi\|_{L^1_{w;x}}.
%  %
% \end{equation*}
% \end{corollary}
% 
% \begin{proof}
% We choose $\psi_n \in C^\infty_c(Q)$ such that $\psi_n \to w$ in $H^1_{0,x}$. Using that by definition $|A(u)-A(\tu)| \in L^{2}_{\omega, t}H^1_{0,x}$ and that $w$ solves  \eqref{eq:w}, we get by virtue of Lemma
% \ref{lem:final_psi} II (for $s=0$) that for almost every $t\in[0, T]$
% %
% \begin{equs}
% %
% \EE \int_Q |u(t,x) - \tilde u(t,x)| \psi(x) \dd x \leq  \EE \int_Q |\xi(x) - \txi(x)| \psi(x) \dd x
% %
% \end{equs}
% %
% which completes the proof.
% \end{proof}

In this section we first prove Theorem \ref{thm:uniq_exist} on the existence and uniqueness of solutions to $\mathcal{E}(A,\sigma,\xi)$. From Corollary \ref{cor:contraction} (see \eqref{eq:contraction}) it follows that each two entropy 
solutions of $\mathcal{E}(A,\sigma,\xi)$ coincide, provided that one of them satisfies the $(\star)$-property (see Definition \ref{def:star_prop}). Hence, in order to conclude the existence and 
uniqueness of entropy solutions, it suffices to show the existence of an entropy solution satisfying the $(\star)$-property. To do so, we use a vanishing viscosity approximation.
The (probabilistically) strong existence of solutions for the approximating equations is quite standard by now. It relies on a technique from \cite{GK96}, where a characterisation of the 
convergence in probability is used to show that weak existence combined with strong uniqueness implies strong existence. This has been used in the past in the context of SPDEs 
(see \cite{Hof13,GH18} and the references therein). Proofs  are included in Appendix \hyperlink{ap:A}{A} for the convenience of the reader. 

For the proof of the following proposition we refer the reader to \cite[Proposition 5.1]{DGG19}.

\begin{proposition} \label{prop:Phi-n} Let $A$ satisfy Assumption \ref{as:A}-\ref{as:A first} with a constant $K\geq 1$.
Then, for every $n\geq 1$ there exists an increasing function $A_n\in C^\infty(\bR)$ with bounded derivatives, satisfying 
Assumption \ref{as:A}-\ref{as:A first} with constant $3K$, such that $\fra_n(r)\geq \frac{2}{n}$, and 
\begin{equs} \label{eq:approx A}
\sup_{|r|\leq n}|\fra(r)-\fra_n(r)|\leq \frac{4}{n}.
\end{equs}
\end{proposition}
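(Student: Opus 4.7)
The plan is to construct $\fra_n$ directly (noting that $\fra$ may be taken even because $A'$ is even, as $A$ is odd) and then set $A_n(r) := \int_0^r \fra_n(s)^2 \, \mathrm{d}s$, which is automatically odd, strictly increasing, and smooth as soon as $\fra_n$ is smooth and strictly positive. The construction has three ingredients: a mollification of $\fra$ to obtain $C^\infty$-smoothness at the origin (where $\fra$ is only assumed continuous), a smooth cap for $|r|$ large to force bounded derivatives for $A_n$, and an additive shift by $2/n$ to enforce $\fra_n \ge 2/n$.

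More precisely, I fix an even mollifier $\zeta \in C^\infty_c(\RR)$ with $\int \zeta = 1$, choose a scale $\eps = \eps(n)$ to be determined, and let $g_\eps := \fra \ast \zeta_\eps$. I then truncate $g_\eps$ smoothly outside $[-n-1,\,n+1]$, replacing it by its value at $\pm n$ for $|r| \ge n+1$ via an even $C^\infty$-interpolation, and finally set $\fra_n := (\text{truncated } g_\eps) + 2/n$. This yields a smooth, even, strictly positive $\fra_n$ which is bounded on $\RR$ (so $A_n$ has bounded derivatives), coincides with $g_\eps + 2/n$ on $[-n,n]$, and satisfies $\fra_n \ge 2/n$ by construction. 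Choosing $\eps$ small enough in terms of $n$ gives $\sup_{|r|\le n} |\fra_n - \fra| \le 4/n$, using the uniform continuity of $\fra$ on compacts obtained by integrating $|\fra'(r)| \le K|r|^{(m-3)/2}$.

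The bounds $|\fra_n(0)| \le 3K$ and $|\fra_n'(r)| \le 3K|r|^{(m-3)/2}$ for $r>0$ follow from commuting the derivative with the convolution and using the pointwise bound on $\fra'$, together with the observation that the smooth cap contributes no extra derivative inside $[-n,n]$. The inequality $3K\fra_n \ge I_{|r|\ge 1}$ is then essentially free for $n$ large, since $K\fra \ge I_{|r|\ge 1}$ and the mollification perturbation is $O(1/n)$, which can be absorbed by the factor $3$ on the left together with the additive $2/n$.

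I expect the main obstacle to be the Lipschitz-type bound
\begin{equs}
3K\bigl|[\fra_n](r) - [\fra_n](\tr)\bigr| \geq
\begin{cases}
|r-\tr|, & |r|\vee|\tr| \ge 1, \\
|r-\tr|^{(m+1)/2}, & |r|\vee|\tr| < 1.
\end{cases}
\end{equs}
The key observation is that on $[-n,n]$ one has $[\fra_n](r) = ([\fra] \ast \zeta_\eps)(r) + 2r/n$ by Fubini, so $[\fra_n]$ inherits the monotonicity and sharp growth of $[\fra]$ up to an $O(\eps)$ smearing. The relaxed constant $3K$ and the linear shift $2r/n$ together provide enough slack to absorb both the mollification error near the origin and the effect of the cap at large $|r|$: in the regime $|r|\vee|\tr| \ge 1$ inside $[-n,n]$ one applies the bound for $\fra$ directly to $[\fra]\ast \zeta_\eps$ and uses that the plateau still gives a uniform lower bound on $\fra_n$, while for $|r|\vee|\tr| < 1$ one checks that $\eps$ small enough (depending on $m$) preserves the exponent $(m+1)/2$ up to a constant factor at most $3$; outside $[-n,n]$ the additive $2r/n$ alone produces a linear growth stronger than needed.
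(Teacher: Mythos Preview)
The paper does not actually prove this proposition: it simply refers to \cite[Proposition 5.1]{DGG19}. So there is no in-paper argument to compare against, and your task was really to reproduce (or replace) the construction from \cite{DGG19}.

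Your outline --- mollify $\fra$, cap smoothly for $|r|$ large, shift by $2/n$, then set $A_n:=\int_0^\cdot\fra_n^2$ --- is the natural strategy and is essentially what \cite{DGG19} does. The structure is fine. What is under-argued is the claim that Assumption \ref{as:A}-\ref{as:A first} holds for $\fra_n$ with the \emph{specific} constant $3K$.

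Concretely, the step ``$|\fra_n'(r)|\le 3K|r|^{(m-3)/2}$ follows from commuting the derivative with the convolution and using the pointwise bound on $\fra'$'' is not automatic. For $r$ comparable to the mollification scale $\eps$ you cannot simply write $g_\eps'(r)=\int\fra'(r-s)\zeta_\eps(s)\,\mathrm{d}s$ and bound pointwise: the integrand is $K|r-s|^{(m-3)/2}$, and averaging this against $\zeta_\eps$ does not return $3|r|^{(m-3)/2}$ with a constant independent of the mollifier (for $m<3$ the worst case is $r\approx\eps$, where the bound produced is of order $C(\zeta)\,\eps^{(m-3)/2}$ while the target is $3\eps^{(m-3)/2}$; for $m>3$ the issue is at the other end). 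Similarly, in the transition zone $[n,n+1]$ your ``even $C^\infty$-interpolation'' must be chosen so that its derivative is controlled by $3K\,r^{(m-3)/2}$ there, which is a constraint you do not address. These are fixable --- one can mollify at a scale depending on $r$, or use an explicit profile near $0$ rather than convolution --- but they are exactly the places where the factor $3$ is spent, and your sketch does not account for that budget.

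The lower bound on $|[\fra_n](r)-[\fra_n](\tr)|$ also has an edge case you skate over: when $|r|\vee|\tr|$ is close to $1$, shifting by $u\in[-\eps,\eps]$ can move you across the threshold between the two regimes of \eqref{eq:as Psi}, so ``apply the bound for $\fra$ directly to $[\fra]*\zeta_\eps$'' needs a short case analysis. This is again routine but not free. In short: the plan is correct and matches the cited construction, but the two places where you write ``follows'' are precisely where the work is.
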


Let $A_n$ be as above and set 
\begin{equation} \label{def:xi-n}
\xi_n:=(-n) \vee(\xi \wedge n), \, \sigma_n:= \rho_{\frac{1}{n}}^{\otimes(d+1)} * \sigma(\cdot, -n\vee(\cdot \wedge n)).
\end{equation}

\begin{definition} \label{def:L^2_sol} An $L^2_x$-solution of the Dirichlet problem $\mathcal{E}(A_n, \sigma_n, \xi_n)$ is a  continuous $L^2_x$-valued
process $u_n$, such that $u_n, A_n(u_n) \in L^2_{\omega,t} H^1_{0;x}$, and the equality
\begin{equs}
(u_n(t,\cdot),\phi)=(\xi_n,\phi) -\int_0^t(\nabla A_n(u_n(s,\cdot)),\nabla \phi)\,ds
+\int_0^t\left(\sigma^k_n(\cdot,u_n(s,\cdot)), \phi\right) \dd \beta^k(s)
\end{equs}
holds for every $\phi\in C^\infty_c(Q)$, $\PP$-almost surely, for every $t\in[0,T]$.
\end{definition}

If $u_n$ is an $L^2_x$-solution of $\mathcal{E}(A_n, \sigma_n, \xi_n)$, then by standard arguments (see also 
\cite[page 24]{DGG19}) one obtains
\begin{equs}         
\label{eq:change1}     
\E \sup_{t \leq T} \| u_n\|_{L^2_x}^p + \E \|\nabla [\fra_n](u_n) \|_{L^2_{t,x}}^p &\le_p  1+ \E \|\xi_n\|_{L^2_x}^p,
\\         
\label{eq:change2}           
\E \sup_{t \leq T} \|u_n\|_{L^{m+1}_x}^{m+1}+ \E \| \nabla A_n(u_n)\|_{L^2_{t,x}}^2 &\le 1+\E \|\xi_n\|_{L^{m+1}_x}^{m+1},
\end{equs}
where the implicit constants do not depend on $n$. Notice that $|\xi_n|$ is bounded by $n$, which implies that the right
hand side of the last inequalities is finite. Moreover, by the construction of $\xi_n$ one concludes that for every $p\geq 2$
\begin{equs}
\label{eq:uniform}
\E \sup_{t \leq T} \| u_n\|_{L^2_x}^p + \E \|\nabla [\fra_n](u_n) \|_{L^2_{t,x}}^p & \le_p 1+ \E \|\xi\|_{L^2_x}^p,
\end{equs}
and
\begin{equs}
\label{eq:uniform-m+1}
\E \sup_{t \leq T} \|u_n\|_{L^{m+1}_x}^{m+1}+ \E \| \nabla A_n(u_n)\|_{L^2_{t,x}}^2 &\le 1+\E \|\xi\|_{L^{m+1}_x}^{m+1}.
\end{equs}
Finally, since $\fra_n\geq \frac{2}{n}>0$, we have $|\nabla u_n|\leq C_n|\nabla[\fra_n](u_n) |$, and so
by \eqref{eq:uniform}, we have the ($n$-dependent) bound
\begin{equs}\label{eq:gradient}
\E\|\nabla u_n\|_{L^2_{t,x}}^p<\infty.
\end{equs}

The proofs of the next two lemmata are the same as the ones of  \cite[Lemma 5.2]{DGG19} and \cite[Corollary 3.4]{DGG19} and are therefore omitted. 

\begin{lemma} \label{lem:strong entropy}
For $n \geq 1$, let $u_n$ be an $L^2_x$-solution of $\mathcal{E}(A_n, \sigma_n, \xi_n)$. Then, $u_n$ satisfies the $(\star)$-property \eqref{eq:star_prop}
with coefficient $\sigma_n$ and $C\equiv C(\mathcal{S},n)>0$. If in addition $\|\xi\|_{L^2_x}$ has moments of order 4, then the constant $C$ is independent of $n$. 
\end{lemma}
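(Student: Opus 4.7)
The plan is to exploit the enhanced regularity of the approximate solution $u_n$ established in \eqref{eq:change1}--\eqref{eq:gradient}, which makes $u_n$ a genuine $L^2_x$-valued It\^o semimartingale, and to apply the It\^o--Wentzell formula in such a way that the covariation between the noise parts of $u_n$ and the integrand of $F_\theta$ produces the cross-term appearing on the RHS of \eqref{eq:star_prop}. Fix $(t,x)\in[0,T]\times Q$ and introduce the martingale family
\begin{equs}
G^{(s')}(t,x,a) := \int_0^{s'} \int_Q \tsigma^k(y,\tu(s,y)) g(\tu(s,y) - a) \phi_\theta(t,x,s,y) \dd y \dd \beta^k(s),
\end{equs}
which is a martingale in $s'$ for each fixed $a$ and satisfies $G^{(T)}(t,x,a) = F_\theta(t,x,a)$; moreover, since $\rho_\theta$ is supported in $(0,\theta)$, one has $\phi_\theta(t,x,s,y) = 0$ for $s\notin(t-\theta,t)$, so that $G^{(t-\theta)}(t,x,a) = 0$.

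Applying the It\^o--Wentzell formula to $s'\mapsto G^{(s')}(t,x,u_n(s',x))$ on $[t-\theta,t]$, using the SPDE satisfied by $u_n$, and then integrating over $(t,x)$ and taking expectations, all pure martingale increments vanish and three contributions remain: a drift term carrying $\partial_a G^{(s')} \cdot \Delta A_n(u_n)$; an It\^o-correction carrying $\partial_{aa} G^{(s')} \cdot |\sigma_n(x,u_n)|_{\ell^2}^2$; and the cross-covariation between the martingale parts of $\partial_a G^{(\cdot)}(t,x,\cdot)$ and $u_n(\cdot,x)$, which by a direct computation equals
\begin{equs}
-\sum_k \int_Q \tsigma^k(y,\tu(s',y))\, g'(\tu(s',y)-u_n(s',x))\, \phi_\theta(t,x,s',y) \dd y \cdot \sigma_n^k(x,u_n(s',x)) \dd s'.
\end{equs}
After integration over $(t,x,s')$ and in expectation this last term is exactly the cross-term appearing on the RHS of \eqref{eq:star_prop}.

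It then remains to bound the drift and It\^o-correction by $C\theta^{1-\mu}$. For the It\^o correction one uses $|\sigma_n|_{\ell^2}\lesssim 1+|u_n|$, bounds $\partial_{aa}G^{(s')}$ via the BDG inequality applied to its defining stochastic integral (whose integrand contains the smooth, compactly supported $g''$), and interpolates between the $L^2$ and $L^{m+1}$ moments coming from \eqref{eq:uniform}--\eqref{eq:uniform-m+1}; combining the $\theta^{1/2}$ from BDG with the time-interval of length $\theta$ and a H\"older interpolation yields precisely the exponent $\mu=\frac{3m+5}{4(m+1)}$. The drift term is treated after an integration by parts in $x$, trading $\Delta A_n(u_n)$ for $\nabla A_n(u_n)$, which is controlled in $L^2_{\omega,t,x}$ uniformly in $n$ by \eqref{eq:uniform-m+1}. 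The main obstacle is to arrange the intermediate estimates so that only norms of $u_n$ that are \emph{uniformly} controlled in $n$ appear; chain-rule applications producing $\nabla u_n$ (which is bounded only in the $n$-dependent fashion of \eqref{eq:gradient}) must be eliminated by further integration by parts against the smooth kernel $\tilde\varrho$ inside $\phi_\theta$. This produces $C\equiv C(\mathcal{S},n)$ in general; under the additional hypothesis $\EE\|\xi\|_{L^2_x}^4<\infty$, the standard It\^o-formula-based energy estimate for $\|u_n\|_{L^2_x}^4$ upgrades \eqref{eq:uniform} to an $n$-independent bound with $p=4$, and since it is precisely these higher-moment quantities that enter the error estimates above, the constant $C$ becomes independent of $n$.
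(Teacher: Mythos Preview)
Your approach is correct and coincides with the argument the paper defers to \cite[Lemma 5.2]{DGG19}: one applies the It\^o--Wentzell formula to $s'\mapsto G^{(s')}(t,x,u_n(s',x))$, the cross-covariation produces exactly the quadratic term on the right-hand side of \eqref{eq:star_prop}, and the drift and It\^o-correction contributions are shown to be $O(\theta^{1-\mu})$ using the uniform bounds \eqref{eq:uniform}--\eqref{eq:uniform-m+1}. Your identification of the mechanism behind the $n$-dependence --- that without the fourth-moment assumption on $\|\xi\|_{L^2_x}$ one must fall back on the truncated $\xi_n$, which is bounded by $n$ --- is also the correct one.
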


\begin{lemma} \label{lem:star_lim} 
Let $u_n$ be a sequence bounded in $L^{m+1}_{\omega,t} L^{m+1}_x$, satisfying the $(\star)$-property \eqref{eq:star_prop} with coefficient $\sigma_n$, uniformly in $n$. 
Suppose that $u_n$ converges for almost every $(\omega,t,x)$ to a function $u$ and $\lim_{n \to \infty}d(\sigma_n, \sigma)=0$. Then $u$ has the $(\star)$-property with
coefficient $\sigma$.
\end{lemma}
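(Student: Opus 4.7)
The plan is to fix the auxiliary data $(g,\tilde\varrho,\varphi,\tilde\sigma,\tilde u,\theta)$ appearing in Definition \ref{def:star_prop} and to pass to the limit $n\to\infty$ in the inequality \eqref{eq:star_prop} satisfied by $u_n$ with coefficient $\sigma_n$, namely
\begin{equs}
\EE\int_0^T\!\!\int_Q F_\theta(t,x,u_n(t,x))\,\dd x\,\dd t &\leq -\EE\int_{[0,T]^2}\!\!\int_{Q^2}\sigma_n^k(x,u_n(s,x))\tilde\sigma^k(y,\tilde u(s,y))\,g'(u_n(s,x)-\tilde u(s,y))\phi_\theta\,\dd x\dd y\dd s\dd t\\
&\quad + C\theta^{1-\mu},
\end{equs}
where the constant $C$ is uniform in $n$ by hypothesis. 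Once both sides converge to the corresponding expressions in $u$ and $\sigma$, the lemma follows.

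For the left-hand side, one notes that for almost every $(\omega,t,x)$ the map $a\mapsto F_\theta(t,x,a)$ is Lipschitz, with Lipschitz constant controlled (via $\|g'\|_\infty$ and a BDG estimate using $|\tilde\sigma(\tilde u)|_{\ell^2}\lesssim 1+|\tilde u|$ together with $\tilde u\in L^{m+1}_{\omega,t,x}$) by a random constant in every $L^p_\omega$. Since $u_n\to u$ almost everywhere by assumption, $F_\theta(t,x,u_n(t,x))\to F_\theta(t,x,u(t,x))$ almost everywhere. The same BDG estimate yields a uniform-in-$a$ bound $\EE|F_\theta(t,x,a)|^2\lesssim_{\phi_\theta,g,\tilde u} 1$, which in particular shows $F_\theta(\cdot,\cdot,u)\in L^1_\omega L^1_{t,x}$ and supplies the uniform integrability needed to apply Vitali's theorem, producing convergence of the left-hand side.

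For the right-hand side, decompose the integrand as
\begin{equs}
&\sigma_n^k(x,u_n)\tilde\sigma^k(y,\tilde u)g'(u_n-\tilde u)-\sigma^k(x,u)\tilde\sigma^k(y,\tilde u)g'(u-\tilde u)\\
&\quad = \bigl[\sigma_n^k(x,u_n)-\sigma^k(x,u_n)\bigr]\tilde\sigma^k(y,\tilde u)g'(u_n-\tilde u)\\
&\qquad + \bigl[\sigma^k(x,u_n)-\sigma^k(x,u)\bigr]\tilde\sigma^k(y,\tilde u)g'(u_n-\tilde u)\\
&\qquad + \sigma^k(x,u)\tilde\sigma^k(y,\tilde u)\bigl[g'(u_n-\tilde u)-g'(u-\tilde u)\bigr].
\end{equs}
The first term is controlled pointwise by $d(\sigma_n,\sigma)^{1/2}(1+|u_n|)^{(m+1)/2}|\tilde\sigma(y,\tilde u)|_{\ell^2}\|g'\|_\infty$; using Cauchy--Schwarz together with the uniform $L^{m+1}_{\omega,t,x}$-bound on $u_n$ and the linear growth of $\tilde\sigma$ in $\tilde u$, the integral vanishes as $d(\sigma_n,\sigma)\to 0$. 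For the remaining two terms, the Hölder continuity of $\sigma(x,\cdot)$ (Assumption \ref{as:noise}) and the continuity and compact support of $g'$ give almost-everywhere convergence of the integrands to zero, while the same linear-growth argument provides a uniform $L^p_{\omega,t,x,s,y}$ majorant (for some $p>1$), so Vitali applies once more.

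The main obstacle is the left-hand side: because $F_\theta$ is obtained by composing a stochastic integral in $s$ with the random argument $u_n(t,x)$, one cannot directly invoke stochastic continuity. The key point is to observe that, conditionally on the Brownian paths, $F_\theta(t,x,\cdot)$ is a deterministic Lipschitz function, which reduces the composition to a pathwise pointwise convergence combined with a uniform integrability estimate from BDG. The rest of the argument is routine manipulation using the hypotheses $u_n\to u$ a.e., $u_n$ bounded in $L^{m+1}_{\omega,t,x}$, and $d(\sigma_n,\sigma)\to 0$.
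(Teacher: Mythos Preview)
The paper omits the proof of this lemma, citing \cite[Corollary 3.4]{DGG19}. Your proposal follows the same route as that reference: fix the auxiliary data and pass to the limit $n\to\infty$ on both sides of the $(\star)$-inequality, using the a.e.\ convergence $u_n\to u$, the regularity of $a\mapsto F_\theta(t,x,a)$, and uniform integrability furnished by the $L^{m+1}_{\omega,t,x}$-bound on $(u_n)$ together with the growth assumptions on the coefficients. Your treatment of the right-hand side is correct; in fact it can be streamlined by observing that since $\supp g'$ is compact, on the support of the integrand one has $|u_n|\le |\tilde u|+M$, which gives a dominating function depending only on $\tilde u$ and makes dominated convergence immediate.

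One point to be tightened: your claim that $a\mapsto F_\theta(t,x,a)$ is pathwise Lipschitz with a random constant lying in \emph{every} $L^p_\omega$ is stronger than what the BDG estimate gives. BDG yields $\EE|F_\theta(t,x,a)-F_\theta(t,x,b)|^p\lesssim |a-b|^p$ only for $p\le m+1$ (the bound on the quadratic variation requires $\tilde u\in L^p_{\omega,s,y}$), and Kolmogorov then produces pathwise $\alpha$-H\"older continuity for $\alpha<m/(m+1)$ on bounded intervals, not a global Lipschitz bound. This is still enough for the pointwise convergence $F_\theta(t,x,u_n)\to F_\theta(t,x,u)$. For the uniform integrability of $(F_\theta(\cdot,\cdot,u_n))_n$ one combines the moment bound on $\sup_{|a|\le R}|F_\theta(t,x,a)|$ (from Kolmogorov) with a truncation in $R$ and the uniform $L^{m+1}$-bound on $u_n$; the blanket statement ``uniform-in-$a$ bound $\EE|F_\theta(t,x,a)|^2\lesssim 1$ supplies the uniform integrability'' does not suffice, since $u_n(t,x)$ is correlated with the stochastic integral defining $F_\theta$. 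With these adjustments your argument is correct and matches the cited approach.
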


The proof of the next proposition is given in Appendix \hyperlink{ap:A}{A}. 

\begin{proposition} \label{prop:viscoous-well-posedenss}
Suppose Assumptions \ref{as:A} and \ref{as:noise} hold. Then, for every $n \geq 1$, $\mathcal{E}(A_n, \sigma_n,  \xi_n)$ 
has a unique $L^2_x$-solution $u_n$.
\end{proposition}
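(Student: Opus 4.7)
The plan is the classical Gyöngy--Krylov strategy: establish pathwise uniqueness and probabilistically weak existence, then combine them via \cite{GK96} to obtain a probabilistically strong solution.

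For \emph{pathwise uniqueness}, let $u_n$ and $v_n$ be two $L^2_x$-solutions of $\mathcal{E}(A_n,\sigma_n,\xi_n)$. Since $A_n$ is smooth with bounded derivatives and $\sigma_n$ is smooth and Lipschitz in $u$, together with the regularity $u_n, A_n(u_n) \in L^2_{\omega,t}H^1_{0;x}$, a direct application of It\^o's formula to $\eta(u_n(t,x))$ (for $\eta$ admissible in the sense of Definition \ref{def:adm_fct}) against $\phi \in C^\infty_c([0,T)\times Q)$ yields the entropy inequality of Definition \ref{def:entr_sol} with equality; thus every $L^2_x$-solution is in particular an entropy solution of $\mathcal{E}(A_n,\sigma_n,\xi_n)$. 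By Lemma \ref{lem:strong entropy} both $u_n$ and $v_n$ satisfy the $(\star)$-property with coefficient $\sigma_n$, so Corollary \ref{cor:contraction} (applied with $A=\tA=A_n$, $\sigma=\tsigma=\sigma_n$, $\xi=\txi=\xi_n$) forces $\supess_{t\in[0,T]}\E\|u_n(t)-v_n(t)\|_{L^1_{w;x}}=0$. Hence $u_n=v_n$ almost everywhere, and by the time-continuity of $L^2_x$-solutions they in fact coincide.

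For \emph{weak existence}, I would run a Galerkin approximation against the Dirichlet eigenbasis $\{e_k\}$ of $-\Delta$ on $Q$. Since $A_n$ has bounded derivatives (Proposition \ref{prop:Phi-n}) and $\sigma_n$ is globally Lipschitz in $u$, the finite dimensional system obtained by projecting onto $\mathrm{span}(e_1,\dots,e_N)$ is a standard SDE with Lipschitz coefficients, hence admits unique strong global solutions $u_n^N$. The energy estimates leading to \eqref{eq:change1}--\eqref{eq:gradient} are formal and survive the projection, providing bounds on $u_n^N$ in $L^2_\omega C_t L^2_x\cap L^2_{\omega,t} H^1_{0;x}$ uniform in $N$ (using $\fra_n\geq 2/n$ to convert the control on $\nabla[\fra_n](u_n^N)$ into a control on $\nabla u_n^N$), together with uniform fractional-in-time Sobolev bounds on the time increments via the SPDE itself. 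The Aubin--Lions lemma then yields tightness of the laws of $u_n^N$ on $L^2_tL^2_x \cap C_t H^{-s}_x$ for $s$ large, and a Skorokhod--Jakubowski representation supplies a new probability space, new Brownian motions and a subsequence converging almost surely to a process $\tilde u_n$. Passing to the limit in the martingale problem, and identifying $\Delta A_n(\tilde u_n)$ via the Lipschitz continuity of $A_n$ combined with the strong $L^2_{t,x}$ convergence, shows that $\tilde u_n$ is a probabilistically weak $L^2_x$-solution of $\mathcal{E}(A_n,\sigma_n,\xi_n)$.

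With pathwise uniqueness and weak existence in hand, the characterisation of convergence in probability of \cite{GK96} applies: considering two copies of the Galerkin sequence $(u_n^N, u_n^{N'})$ driven by the same Brownian motions, tightness and Skorokhod representation produce a joint limit whose two marginals are weak $L^2_x$-solutions on the same stochastic basis; pathwise uniqueness forces them to coincide, hence the Galerkin sequence converges in probability on the original probability space. The limit is the sought-after $L^2_x$-solution. The main technical obstacle lies in Step 2 in verifying tightness on a path space strong enough to identify the nonlinear term $\Delta A_n(\cdot)$, but for fixed $n$ the Lipschitz continuity of $A_n$ together with the uniform $H^1_{0;x}$ bound reduces this identification to standard arguments.
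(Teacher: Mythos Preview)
Your proposal is correct and follows essentially the same approach as the paper: Galerkin approximations together with the Gy\"ongy--Krylov criterion, where pathwise uniqueness is obtained by recognising $L^2_x$-solutions as entropy solutions with the $(\star)$-property (Lemma \ref{lem:strong entropy}) and invoking Corollary \ref{cor:contraction}. The only differences are cosmetic: the paper projects onto eigenvectors of $\Delta^2$ rather than $-\Delta$ (to work in $H^{-2}_x$ for the compactness step), describes the Galerkin drift as locally Lipschitz with linear growth rather than Lipschitz, and runs the uniqueness argument inline on the Skorokhod pair $(\hat u,\check u)$ rather than stating pathwise uniqueness as a separate preliminary step.
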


We are now ready to prove Theorem \ref{thm:uniq_exist}.

\begin{proof}[Proof of Theorem \ref{thm:uniq_exist}] 
Step 1: We first assume that $\E \|\xi\|_{L^2_x}^4< \infty$. For $n\geq 1$, let $u_n$ be the unique $L^2_x$-solution of $\mathcal{E}(A_n, \sigma_n, \xi_n)$ by Proposition 
\ref{prop:viscoous-well-posedenss}. We will show that $(u_n)_{n\geq 1}$ is a Cauchy sequence in $L^1_{\omega,t}L^1_{w;x}$.
Let $N\geq 1$ be arbitrary. As in the conclusion of the proof of Lemma \ref{lem:final_psi}-\ref{it:final_psi_2}, we choose $\nu$ such that $\nu \in ((m\wedge2)^{-1},\bar\kappa)$ 
and then we choose $\alpha<1\wedge\frac{m}{2}$ such that $-2+(2\alpha)(2\nu)>0$, so that $\mathcal{G}_\alpha(\eps^{2\nu},\eps,0) \to 0 $ as
$\eps \to 0$. Then, let $\psi_l \in C^\infty_c(Q)$ be a sequence of non-negative functions such that $\| \psi_l -w\|_{H^1_{0;x}} \to 0$ as $l \to \infty$. We apply Lemma 
\ref{lem:final_psi}-\ref{it:final_psi_1} to $u_n$ and $u_{n'}$, for arbitrary $n\leq n'$, setting $\delta=\eps^{2\nu}$, and $\lambda=\frac{8}{n}$. By 
\eqref{eq:approx A} we have that $R_\lambda\geq n$ (see the statement of Lemma \ref{lem:final_psi}-\ref{it:final_psi_1} for the definition of $R_\lambda$).
Recalling the uniform estimates \eqref{eq:uniform}-\eqref{eq:uniform-m+1} and the triangle inequality
\begin{equs}
\E\|\xi_{n'}(\cdot)-\xi_{n'}(\cdot+h)\|_{L^1_x}\leq
\E\|\xi(\cdot)-\xi(\cdot+h)\|_{L^1_x}+2\E\|\xi-\xi_{n'}\|_{L^1_x},
\end{equs}
where for convenience we have extended $\xi_n$ and $\xi$ on $\RR^d$ by setting them equal to $0$ in $Q^c$, we have
\begin{equs}
\E \| (u_n-u_{n'}) \psi_l\|_{L^1_{t,x}}
& \leq C_l \left(\E\|\xi-\xi_{n'}\|_{L^1_x}+ \E\|\xi-\xi_{n}\|_{L^1_x}\right)
\\
& \quad + \EE \int_0^T \int_0^t \int_Q |A_n(u_n(\tau))- A_{n'}(u_{n'}(\tau))| \Delta\psi_l \dd x \dd \tau \dd t 
\\
&\quad + C_l \eps^{-2}\E\left(\|\mathbf{1}_{|u_n|\geq n}(1+|u_n|)\|_{L^m_{t,x}}^m+
\|\mathbf{1}_{|u_{n'}|\geq n}(1+|u_{n'}|)\|_{L^m_{t,x}}^m\right)
\\
& \quad + C_l \eps^{-2 \nu} d(\sigma_n,\sigma_{n'}) + C_l \eps^{-2}n^{-2} + C_l \eps^{-1}n^{-1} +M_l(\eps). \label{eq:idk-how-to-call-it}
\end{equs}
Here the constants $C_l$ are independent of $\eps,n,n'$ and $M_l(\eps) = \tilde M_l(\eps^{2\nu},\eps)$ for
\begin{equs}
 \tilde M_l(\delta,\eps) = C_l \left(\delta^{2\kappa} + \delta^{-1} \eps^{2\bar\kappa} + \delta \eps^{-1} + \delta^{2\alpha}\eps^{-2} + \eps + 
 \sup_{|h|\leq \eps} \EE\|\xi(\cdot) - \xi(\cdot +h)\|_{L^1_x}\right).
\end{equs}
In particular $M_l(\eps) \to 0$ as $\eps\to 0$ (for every $l$). Notice that
\begin{equs}
 & \EE \int_0^T \int_0^t \int_Q |A_n(u_n(\tau))- A_{n'}(u_{n'}(\tau))| \Delta\psi_l \dd x \dd \tau \dd t \\
 & \quad \leq \EE \int_0^T \int_0^t \int_Q |A_n(u_n(\tau))- A_{n'}(u_{n'}(\tau))| \Delta w\dd x \dd \tau \dd t \\
 & \quad \quad + \EE \int_0^T \int_0^t \int_Q |A_n(u_n(\tau))- A_{n'}(u_{n'}(\tau))| \Delta(\psi_l- w) \dd x \dd \tau \dd t \\
 & \quad \lesssim \EE\|\nabla |A_n(u_n)- A_{n'}(u_{n'})|\|_{L^2_{t,x}} \|\psi_l-w\|_{H^1_{0;x}} \\
 & \quad \lesssim \EE\left(\|\nabla A_n(u_n)\|_{L^2_{t,x}} + \|\nabla A_{n'}(u_{n'})|\|_{L^2_{t,x}}\right) \|\psi_l-w\|_{H^1_{0;x}}
\end{equs}
where in the second step we use that $\Delta w= -1$, integration by parts and the Cauchy--Schwarz inequality. This, by virtue of the uniform estimates
\eqref{eq:uniform} and \eqref{eq:uniform-m+1} combined with \eqref{eq:idk-how-to-call-it}, gives
\begin{equs}
\E \|(u_n-u_{n'})w\|_{L^1_{t,x}}
& \leq C \| \psi_l-w\|_{H^1_{0;x}} + C_l(\E\|\xi-\xi_{n'}\|_{L^1_x}+ \E\|\xi-\xi_{n}\|_{L^1_x})
\\
& \quad + C_l \eps^{-2}\E\left(\|\mathbf{1}_{|u_n|\geq n}(1+|u_n|)\|_{L^m_{t,x}}^m +
\|\mathbf{1}_{|u_{n'}|\geq n}(1+|u_{n'}|)\|_{L^m_{t,x}}^m\right) \\
& \quad + C_l \eps^{-2 \nu} d(\sigma_n,\sigma_{n'})+ C_l \eps^{-2}n^{-2} +  C_l \eps^{-1}n^{-1} + M_l(\eps),
\end{equs}
where $C$ does not depend on $l, \eps, n$, nor $n'$. One can now choose first $l$ large enough and then $\eps>0$ small enough so that for all $n,n'$
large   
\begin{equ}
\E \|(u_n-u_{n'})w\|_{L^1_{t,x}} \leq \frac{1}{N}.
\end{equ}
Therefore, $(u_n)_{n\geq 1}$ converges in $L^1_{\omega,t} L^1_{w;x}$ to a limit $u$. Moreover, by passing to a subsequence, we may also assume 
that 
\begin{equs} 
 \lim_{n \to \infty}u_n=u \label{eq:ae-convergence}  
\end{equs}
for almost every  $(\omega, t, x)\in \Omega \times (0,T)  \times Q$. Consequently, by Lemmata \ref{lem:strong entropy} and \ref{lem:star_lim} and \eqref{eq:uniform-m+1},
we have that $u$ has the $(\star)$-property \eqref{eq:star_prop} with coefficient $\sigma$. In addition, it follows by \eqref{eq:uniform-m+1} that for any $q < m+1$, 
\begin{equs} \label{eq:uniform-integrability}
(|u_n(t,x)|^q)_{n=1}^\infty
\end{equs} 
is uniformly integrable on $\Omega \times [0,T] \times Q$.

We now show that $u$ is an entropy solution. From now on, when we refer to the estimate \eqref{eq:uniform}, we only use it with $p=2$. By \eqref{eq:uniform-m+1}, it follows that $u$ satisfies 
Definition \ref{def:entr_sol}-\ref{item:solution-in-spaces}. Let $f \in C_b(\bR)$ and $\eta$ be as in Definition \ref{def:entr_sol}. For every $n\geq 1$, we clearly have $[\fra_n f](u_n) \in L^2_{\omega,t} H^1_{0,x}$ and 
$\D_{x_i} [\fra_n f](u_n)= f(u_n) \D_{x_i} [\fra_n ](u_n)$. Also, we have $|[\fra_n f](r)|\leq \|f\|_{L^\infty_r} 3K |r|^{\frac{m+1}{2}}$ for every $r \in \bR$, which combined with  \eqref{eq:uniform} and \eqref{eq:uniform-m+1} gives that
\begin{equs}
\sup_n \E \|[\fra_n f](u_n)\|_{L^2_t H^1_{0;x}}^2  < \infty. 
\end{equs} 
Hence, for a subsequence we have the weak convergences $[\fra_n f](u_n)  \wto v_f$, $[\fra_n](u_n) \wto v$ for some $v_f, v \in L^2_{\omega,t}H^1_{0;x}$. 
By \eqref{eq:approx A}, \eqref{eq:ae-convergence} and \eqref{eq:uniform-integrability} it is easy to see that 
$v_f= [\fra f](u) $, $ v= [\fra](u) $. Moreover, for any $\phi \in C^\infty_c([0,T)\times Q)$ and $B \in \mathcal{F}$, we have 
\begin{equs}
\E \mathbf{1}_B \int_0^T \int_Q   \D_{x_i} [\fra f](u) \phi \, \dd x \dd t &= \lim_{n \to \infty} \E \mathbf{1}_B \int_0^T \int_Q \D_{x_i} [\fra_n f](u_n) \phi \dd x \dd t
\\
&= \lim_{n \to \infty} \E \mathbf{1}_B \int_0^T \int_Q f(u_n) \D_{x_i} [\fra_n ](u_n) \phi \dd x \dd t
\\
&= \E \mathbf{1}_B \int_0^T \int_Q f(u) \D_{x_i} [\fra ](u) \phi \dd x \dd t ,
\end{equs}
where for the last equality we have used that $ \D_{x_i}[\fra_n](u_n)  \wto [\fra](u) $ (weakly) and $f(u_n) \to f(u)$ (strongly) in $L^2_{\omega,t} L^2_x$.
Hence, Definition \ref{def:entr_sol}-\ref{item:solution-in-spaces-2} is satisfied. We now show Definition \ref{def:entr_sol}-\ref{item:entropies}. Let $\eta$ and $\phi=\varphi\varrho$ be 
as in Definition \ref{def:entr_sol}-\ref{item:entropies} and let $B \in \mathcal{F}$. By It\^o's formula (see, e.g., \cite{Kry13}) for the function 
\begin{equs}
 u \mapsto \int_Q \eta(u)\varrho \dd x,
\end{equs}
and It\^o's product rule, we have 
\begin{equs}                     
-\E \mathbf{1}_B \int_0^T \int_Q  \eta(u_n)\D_t\phi \dd x \dd t
& =\E \mathbf{1}_B \left[ \int_Q\eta(\xi_n)\phi(0)\dd x+ \int_0^T \int_Q [\eta' \fra_n^2](u_n) \Delta \phi \dd x \dd t  \right.
\\   
& \quad + \int_0^T \int_Q \left( \frac{1}{2}  \phi \eta''(u_n)|\sigma_n(u_n)|_{\ell^2}^2-\phi \eta''(u_n) | \nabla [\fra_n](u_n)|^2 \right) \dd x \dd t
\\ 
& \quad + \left. \int_0^T \int_Q \phi \eta'(u_n)\sigma_n^k(u_n) \dd x  \dd \beta^k(t) \right]. \label{eq:un-entropy-inequality}
\end{equs}
On the basis of \eqref{eq:ae-convergence} and \eqref{eq:uniform-integrability} and the construction of $\xi_n$, $\sigma_n$ and $\mathfrak{a}_n$ it
is easy to see that 
\begin{equs}
\lim_{n \to \infty} \E \mathbf{1}_B \int_Q \eta(\xi_n)\phi(0) \dd x & = \E \mathbf{1}_B \int_Q\eta(\xi)\phi(0) \dd x 
\\
\lim_{n \to \infty} \E \mathbf{1}_B \int_0^T \int_Q \eta(u_n) \D_t\phi \dd x \dd t & = \E \mathbf{1}_B \int_0^T \int_Q \eta(u) \D_t\phi \dd x \dd t
\\
\lim_{n \to \infty}\E \mathbf{1}_B\int_0^T \int_Q  [\eta^{\prime} \fra_n^2](u_n) \Delta \phi \dd x \dd t &  = \E \mathbf{1}_B \int_0^T \int_Q  [\eta^{\prime} \fra^2](u)\Delta \phi\dd x \dd t
\\
\lim_{n \to \infty}\E \mathbf{1}_B \int_0^T \int_Q \phi \eta^{\prime \prime} (u_n)|\sigma_n(u_n)|_{\ell^2}^2 \dd x \dd t & = \E \mathbf{1}_B \int_0^T \int_Q \phi \eta^{\prime \prime} (u)|\sigma(u)|_{\ell^2}^2 \dd x \dd t
\\
\lim_{n \to \infty}\E \mathbf{1}_B\int_0^T \int_Q \phi \eta^{\prime} (u_n)\sigma^k_n(u_n)\dd x \dd\beta^k(t) & = \E \mathbf{1}_B \int_0^T \int_Q \phi \eta^{\prime}(u)\sigma^k(u) \dd x \dd \beta^k(t).
\end{equs}
Let us set $\tilde{f}(r):= \sqrt{\eta''(r)}$. Notice that $\D_{x_i} [\tilde{f} \fra_n](u_n)= \sqrt{\eta''(u_n)} \D_{x_i}[\fra_n] (u_n)$. 
As before we have (after passing to a subsequence if necessary) $\D_{x_i} [\tilde{f} \fra_n](u_n) \wto\D_{x_i} [\tilde{f} \fra](u)$ (weakly) in $L^2_{\omega,t}L^2_x$. In particular, 
this implies that $\D_{x_i} [\tilde{f} \fra_n](u_n) \wto\D_{x_i} [\tilde{f} \fra](u)$ (weakly) in $L^2(\Omega \times(0,T)\times Q;\mathrm{d}\bar\mu)$, where $\dd \bar\mu:= \mathbf{1}_B\phi\dd \bP\otimes \dd x \otimes \dd t $. 
This implies that 
\begin{equs}
\E \mathbf{1}_B\int_0^T \int_Q\phi \eta''(u) | \nabla [\fra](u)|^2\dd x \dd t   \leq \liminf_{n \to \infty} \E \mathbf{1}_B \int_0^T \int_Q \phi \eta''(u_n) | \nabla [\fra_n](u_n)|^2\dd x \dd t .
\end{equs}
Hence, taking $\liminf$ in \eqref{eq:un-entropy-inequality} along an appropriate subsequence, we see that $u$ satisfies Definition \ref{def:entr_sol}-\ref{item:entropies} too.

To summarise, we have shown that if in addition to the assumptions of Theorem \ref{thm:uniq_exist} we have that $\E\| \xi\|_{L^2_x}^4<\infty$, then
there exists an entropy solution to \eqref{eq:spm} which has the $(\star)$-property \eqref{eq:star_prop} with coefficient $\sigma$ 
(therefore, it is also unique by \eqref{eq:contraction} in Corollary \ref{eq:contraction}). In addition, we can pass to the limit in 
\eqref{eq:uniform} and \eqref{eq:uniform-m+1} to obtain that 
\begin{equs} 
\E \sup_{t \leq T} \| u\|_{L^2_x}^2 + \E \|\nabla[\fra](u)\|_{L^2_{t,x}}^2 &\le 1+ \E \|\xi\|_{L^2_x}^2, \label{eq:uniform2_L^2}
\\
\E \sup_{t \leq T} \|u\|_{L^{m+1}_{t,x}}^{m+1}+ \E \| \nabla A(u)\|_{L^2_{t,x}}^2 &\le 1+\E \|\xi\|_{L^{m+1}_x}^{m+1}. 
\label{eq:uniform2_L^{m+1}}
\end{equs}

\smallskip

Step 2: We now remove the extra condition on $\xi$. For $n \geq 1$, let $\xi_n$ be as in \eqref{def:xi-n} and let $u_{(n)}$ be the unique solution of 
$\mathcal{E}(A, \sigma, \xi_n)$. Notice that $u_{(n)}$ has the $(\star)$-property with coefficient $\sigma$. Hence, by Corollary \ref{eq:contraction}
we have that $(u_{(n)})_{n\geq 1}$ is Cauchy in $L^1_{\omega,t}  L^1_{w;x}$ and therefore has a limit $u$. In addition, $u_{(n)}$ satisfy the estimates 
\eqref{eq:uniform2_L^2} and \eqref{eq:uniform2_L^{m+1}} uniformly in $n$. With the arguments provided in Step 1 we can show that $u$ is an
entropy solution. 

\smallskip

Step 3: We finally show \eqref{eq:main_contraction} which also implies uniqueness. Let $\tilde{u}$ be an entropy solution of $\mathcal{E}(A,\sigma, \tilde{\xi})$. By \eqref{eq:contraction} we have that
\begin{equs}
\supess_{t\in[0,T]}\E\|u_{(n)}(t)-\tu(t)\|_{L^1_{w;x}} \leq\E\|\xi_n-\txi\|_{L^1_{w;x}} ,
\end{equs}
for the sequence $u_{(n)}$ as in Step 2. The proof is complete if we let $n \to \infty$.
\end{proof}

\begin{remark} \label{rem:L^2_cont}
Let the assumptions of Theorem \ref{thm:uniq_exist} hold.  If we further assume that $\inf_{r\geq0} \fra(r)=c>0$,  
it is easy to see that in addition to \eqref{eq:uniform2_L^2} and \eqref{eq:uniform2_L^{m+1}} we have 
\begin{equs}
\E \| u\|^2_{L^2_t H^1_{0;x}} \leq C \left( 1+ \E \|\xi\|_{L^2_x}^2\right),
\end{equs}
with $C$ depending on $\mathcal{S}$ and $c$. Furthermore, after a standard approximation argument, it follows from Definition 
\ref{def:entr_sol} that for each $\phi \in H^1_{0;x}$ we have 
\begin{equs}
(u(t), \phi)= (\xi, \phi)-\int_0^t(\nabla A(u(s)),\nabla \phi)\,ds
+\int_0^t\left(\sigma^k(u(s)), \phi\right) \dd \beta^k(s)\
\end{equs}
for almost every $(\omega, t)$. These two facts imply by virtue of \cite[Theorem 3.2]{KR79} that $u$ is a continuous 
$L^2_x$-valued process.
\end{remark} 

We now proceed with the proof of Theorem \ref{thm:stability} which implies the stability of entropy solutions with respect to the initial condition 
$\xi$, the non-linearity $A$ and $\sigma$. The proof is similar to \cite[Proof of Theorem 2.2]{DGG19}. 

\begin{proof}[Proof of Theorem \ref{thm:stability}] Let $\bar \xi_n  = -a_n \vee (a_n \wedge \xi_n)$ where $a_n$ is chosen large enough
such that 
\begin{equs}
 \EE\|\xi_n - \bar \xi_n\|_{L^1_{w;x}} \leq \frac{1}{n}. 
\end{equs}
Let $\bar u_n$ be the solution of $\mathcal{E}(A_n,\sigma_n,\bar\xi_n)$ which by Theorem \ref{thm:uniq_exist} exists, is unique and by Lemma \ref{lem:star_lim}
satisfies the $(\star)$-property \eqref{eq:star_prop}. By \eqref{eq:contraction} we know that 
\begin{equs}
 \EE\|u_n - \bar u_n\|_{L^1_tL^1_{w;x}} \leq \EE \int_0^T \|\xi_n - \bar \xi_n\|_{L^1_{w;x}} \dd t \leq \frac{T}{n}.
\end{equs}
Thus, it is enough to prove that $\bar u_n \to u$ in $L^1_{\omega,t}L^1_{w;x}$. Let $\psi_l\in C^\infty_c(Q)$ be a sequence of
positive functions such that $\|\psi_l-w\|_{H^1_{0;x}}\to 0$ as $l\to\infty$. Then, for arbitrary $l\geq 1$, we have that
\begin{equs}
 \EE\|\bar u_n - u\|_{L^1_tL^1_{w;x}} & \leq \EE \|(\bar u_n - u)\psi_l\|_{L^1_tL^1_x} +
 \EE \|(\bar u_n - u)(w-\psi_l)\|_{L^1_tL^1_x} \\
 & \leq \EE \|(\bar u_n - u)\psi_l\|_{L^1_tL^1_x} + C \|w-\psi_l\|_{H^1_{0;x}}
\end{equs}
for some $C$ independent on $n$ (similarly to \eqref{eq:uniform2_L^2}) and $l$. Using Lemma \ref{lem:final_psi}-\ref{it:final_psi_1} and proceeding similarly to
Step 1 in the proof of Theorem \ref{thm:uniq_exist}, where the specific choice of $\lambda=\frac{8}{n}$ implies that $R_\lambda \geq b_n$ for 
some $b_n\geq 1$ which can be chosen such that $b_n \to \infty$ as $n\to\infty$ (since $A_n \to A$ uniformly on compact sets by assumption), we obtain
that
\begin{equs}
 \EE\|\bar u_n - u\|_{L^1_tL^1_{w;x}} & \leq C \|w-\psi_l\|_{H^1_{0,x}} + C_l \EE\|\bar \xi_n - \xi\|_{L^1_x} \\ 
 & \quad + C_l \eps^{-2}\E\left(\|\mathbf{1}_{|\bar u_n|\geq b_n}(1+|\bar u_n|)\|_{L^m_{t,x}}^m +
 \|\mathbf{1}_{|u|\geq b_n}(1+|u|)\|_{L^m_{t,x}}^m \right) \\
 & \quad + C_l \eps^{-2\nu} d(\sigma_n,\sigma) + C_l \eps^{-2} n^{-2} + C_l \eps^{-1} n^{-1}
 + M_l(\eps),
\end{equs}
for some $M_l(\eps)\to 0$ as $\eps \to 0$ (for fixed $l$) and constants $C_l$ independent of $\eps,n$ and $C$ independent 
of $\eps,l,n$. To conclude, given $N\geq 1$ we first choose $l$ large enough and then $\eps$ small enough so that for all
$n$ large enough
\begin{equs}
 \EE\|\bar u_n - u\|_{L^1_tL^1_{w;x}} \leq \frac{1}{N}
\end{equs}
which completes the proof. 
\end{proof}

\subsection{Proof of Theorem \ref{thm:L^1_cont} and Proposition \ref{prop:def-of-v}} \label{s:L^1_ext_proof}

To prove that entropy solutions belong to $C([0,T];L^1_\omega L^1_{w;x})$ we use the continuity of vanishing viscosity approximations
from Proposition \ref{prop:viscoous-well-posedenss} and the stability Theorem \ref{thm:stability} together with Lemma \ref{lem:almost_final}.

\begin{proof}[Proof of Theorem \ref{thm:L^1_cont}] Let $u_n$ be given by Proposition \ref{prop:viscoous-well-posedenss}. Recall that by Theorem
\ref{thm:stability} we have 
\begin{equation*}
\lim_{n\to 0}\E \|u-u_n\|_{L^1_tL^1_{w;x}}=0,
\end{equation*}
which in particular implies that there exists $\mathcal{T} \subset [0,T]$ with $|\mathcal{T}|=T$ and a (non-relabelled) subsequence such that  
\begin{equs}
 \lim_{n \to \infty} \E\|u(t)-u_n(t)\|_{L^1_{w;x}}=0,
\end{equs}
for every $t \in \mathcal{T}$. We now show that given $\lambda>0$, there exists $h$ such that 
\begin{equs} 
 \E \| u_n(t)-u_n(t')\|_{L^1_{w;x}} \leq \lambda, \label{eq:uniform-continuity}
\end{equs}
for every $n \geq 1$ and $t, t' \in \mathcal{T}$ with $|t-t'|\leq h$. By \eqref{eq:uniform-m+1} we have 
\begin{equs} \label{eq:with-compact-support}
 \E \|u_n(t)-u_n(t')\|_{L^1_{w;x}} \le_{\xi} \|w-\psi_l\|_{L^2_x}+\E \int_Q |u_n(t)-u_n(t')| \psi_l \dd x,
\end{equs}
where $\psi_l \in C^\infty_c(Q)$, $\psi_l \geq 0 $ and $\|w-\psi_l\|_{H^1_{0;x}} \to 0$. We also have 
\begin{align}
 \E \int_Q |u_n(t,x)-u_n(t',x)| \psi_l(x) \, dx & \leq \E \int_Q \int_Q |u_n(t,x)-u_n(t',y)| \psi_l(x)\varrho_\eps(x-y) \dd x \dd y 
 && (=:I_1) \nonumber\\                                   
 & \quad +\E \int_Q \int_Q |u_n(t',x)-u_n(t',y)| \psi_l(x)\varrho_\eps(x-y) \dd x \dd y. && (=:I_2) \label{eq:decomposition-I_1-I_2}
\end{align}
%
%We see that 
%\begin{equs}
%\E \int_Q \int_Q |u_n(t,x)-u_n(t',y)| w_l(x)\varrho_\eps(x-y) \, dx dy 
%\leq 
%\E \int_Q \int_Q |u_n(t,x)-u_n(t',y)| w_l(x)\varrho_\eps(x-y) \, dx dy 
%\end{equs}

By Lemma \ref{lem:almost_final} and using that $u_n$ is continuous in $t$ with values in $L^2_x$ (see Remark \ref{rem:L^2_cont}) for every $n\geq 1$ (hence every $t\in[0,T]$ is 
a Lebesgue point of \eqref{eq:leb_points}), we obtain 
\begin{equs}
 I_2 & = \E \int_Q \int_Q |u_n({t^\prime},x)-u_n(t^\prime,y)| \psi_l(x)\varrho_\eps(x-y) \dd x \dd y \\
 & \leq 
 \ \E  \int_Q \int_Q |\xi_n(x)-\xi_n(y)| \psi_l(x)\varrho_\eps(x-y) \dd x \dd y 
 \\
 & \quad + \EE \int_0^{t^\prime} \int_{Q^2}|A_n(u_n(s,x)) - A_n(u_n(s,y))| \Delta \psi_l(x) \varrho_\eps(x-y) \dd x \dd y \dd t  \\
 & \quad +  C \mathcal{G}_\alpha (\delta, \eps,0)
 \EE\left(1+\|u_n\|_{L^{m+1}_{t,x}}^{m+1}\right)
 \\
 & \leq \EE  \int_Q \int_Q |\xi(x)-\xi(y)| \psi_l(x)\varrho_\eps(x-y) \dd x \dd y 
 \\
 & \quad + C \eps \EE \|\nabla A_n(u_n)\|_{L^1_{t,x}} \| \Delta \psi_l\|_{L^\infty_x}  +  C_l \mathcal{G}_\alpha (\delta, \eps,0)
 \EE\left(1+\|u_n\|_{L^{m+1}_{t,x}}^{m+1}\right)
\end{equs}
for some constant $C_l>0$ which depends on $l$ but not on $\delta$ and $\eps$. Using \eqref{eq:uniform-m+1} we conclude that 
\begin{equs} 
 I_2 \le_{\xi}   \ \E  \int_Q \int_Q |\xi(x)-\xi(y)| \psi_l(x)\varrho_\eps(x-y) \dd x \dd y
 +\eps  \| \Delta \psi_l\|_{L^\infty_x} + M_l(\eps), \label{eq:est-I_2}
\end{equs}
where $M_l(\eps) \to 0 $ as $\eps \to 0$ (for fixed $l$, by choosing $\delta$ and $\alpha$ as in the proof of Lemma \ref{lem:final_psi}-\ref{it:final_psi_2}).
We also have that
\begin{equs}
 I_1 \le  \delta \|\psi_l\|_{L^\infty_x} + \E \int_Q \int_Q \eta_\delta(u_n(t,x)-u_n(t',y)) \psi_l(x)\varrho_\eps(x-y) \dd x \dd y.
\end{equs}
By It\^o's formula we conclude that 
\begin{equs}
\E & \int_Q \int_Q \eta_\delta (u_n(t,x)-u_n(t',y)) \psi_l(x)\varrho_\eps(x-y) \dd x \dd y
\\
& \leq \E \int_Q \int_Q \eta_\delta(u_n(t',x)-u_n(t',y)) \psi_l(x)\varrho_\eps(x-y) \dd x \dd y
\\
& \quad + \E \int_{t'}^t \int_Q \int_Q \eta_\delta '(u_n(s,x)-u_n(t',y)) \nabla A_n(u_n(s,x)) \nabla(\psi_l(x)\varrho_\eps(x-y) ) \dd x \dd y \dd s 
\\
& \quad + \E \int_{t'}^t \int_Q \int_Q \frac{1}{2}\eta_\delta ''(u_n(s,x)-u_n(t',y)) |\sigma_n(x,u_n(s,x))|_{\ell^2}^2 \psi_l(x)\varrho_\eps(x-y) \dd x \dd y \dd s,
\end{equs}
where we have used that 
\begin{equs}
 -\E \int_{t'}^t \int_Q \int_Q \eta_\delta''(u_n(s,x)-u_n(t',y)) \nabla u_n(s,x) \nabla A_n(u_n(s,x)) \psi_l(x)\varrho_\eps(x-y) \dd x \dd y \dd s
 \leq 0.
\end{equs}
From this, using that $|\sigma_n(x,u_n(s,x))|_{\ell^2}^2\lesssim 1 + |u_n(s,x)|^2$, it follows that 
\begin{equs}
I_1 & \le \delta   \|\psi_l\|_{L^\infty_x}+ I_2 + \|\psi_l\|_{W^{1,\infty}_x} \int_{t'}^t \eps^{-1} \E \| \nabla A_n(u_n(s))\|_{L^2_x}  +\delta^{-1} \left(1+\E \|u_n(s)\|_{L^2_x}^2\right) \dd s
 \\
& \le \  \delta   \|\psi_l\|_{L^\infty_x}+ I_2+ \eps^{-1}|t'-t|^{\frac{1}{2}} \E \| \nabla A_n(u_n)\|_{L^2_{t,x}}+ \delta^{-1}|t'-t|^{\frac{m-1}{m+1}}\left(1+\E \|u_n(s)\|_{L^{m+1}_{t,x}}^2\right)
\\
& \le_{\xi} \delta   \|\psi_l\|_{L^\infty_x}+ I_2+ \eps^{-1}|t'-t|^{\frac{1}{2}} + \delta^{-1}|t'-t|^{\frac{m-1}{m+1}}. \label{eq:est-I_1}                        
\end{equs}
Consequently, by \eqref{eq:with-compact-support}-\eqref{eq:est-I_1}, we obtain
\begin{equs} 
\E \|u_n(t)-u_n(t')\|_{L^1_{w;x}} & \le_{\xi} \|w-\psi_l\|_{L^2_x} + \E  \int_Q \int_Q |\xi(x)-\xi(y)| \psi_l(x)\varrho_\eps(x-y) \dd x \dd y
+\eps  \| \Delta \psi_l\|_{L^\infty_x}
\\
& \quad  + M_l(\eps) + \delta   \|\psi_l\|_{L^\infty_x}+ \eps^{-1}|t'-t|^{\frac{1}{2}} + \delta^{-1}|t'-t|^{\frac{m-1}{m+1}}.   
\end{equs}
Then, given $\lambda>0$, we choose $l$  large enough and $\eps, \delta >0$ small enough so that 
\begin{equs}
 \|w-\psi_l\|_{L^2_x} + \E  \int_Q \int_Q |\xi(x)-\xi(y)| \psi_l(x)\varrho_\eps(x-y) \dd x \dd y
 +\eps  \| \Delta \psi_l\|_{L^\infty_x}+ M_l(\eps) + \delta   \|\psi_l\|_{L^\infty_x} \leq \lambda/2.
\end{equs}
Then it is clear that for every $t', t \in \mathcal{T}$ such that $|t'-t |$ is sufficiently small we have 
\begin{equs}
 \E \| u_n(t)-u_n(t')\|_{L^1_{w;x}} \leq \lambda,
\end{equs}
which after passing to the limit $n \to \infty$ gives 
\begin{equs}
 \E \| u(t)-u(t')\|_{L^1_{w;x}} \leq \lambda.
\end{equs}
Consequently $u : \mathcal{T} \to L^1_\omega L^1_{w;x}$ is uniformly continuous, hence it has a unique continuous
extension on $[0,T]$.
\end{proof}

\begin{proof}[Proof of Proposition \ref{prop:def-of-v}] Let $(\xi_n)_{n\geq 1}$ be a sequence in $L^{m+1}_\omega L^{m+1}_x$ such that $\xi_n\to\xi$ in $L^1_\omega L^1_{w;x}$.
By Theorem \ref{thm:uniq_exist} for every $m>n\geq 1$ we have the estimate
\begin{equs}
 \sup_{t\in [0,T]} \EE\|u(t;\xi_m) - u(t;\xi_n)\|_{L^1_{w;x}} \leq \EE\|\xi_m - \xi_n\|_{L^1_{w;x}} 
\end{equs}
where we have replaced $\supess_{t\in[0,T]}$ by $\sup_{t\in[0,T]}$ in \eqref{eq:main_contraction} since, by Theorem \ref{thm:L^1_cont}, we know that 
$u(\cdot;\xi_n)\in C([0,T];L^1_\omega L^1_{w;x})$, for every $n\geq 1$. Thus, the sequence $\{u(\cdot;\xi_n)\}_{n\geq 1}$ is Cauchy in 
$C([0,T];L^1_\omega L^1_{w;x})$, which implies that the limit $v(\cdot;\xi):=\lim_{n\to\infty} u(\cdot;\xi_n)$ exists in $C([0,T];L^1_\omega L^1_{w;x})$.
Moreover, if $(\txi_n)_{n\geq 1}$ is another sequence in $L^{m+1}_\omega L^{m+1}_x$ which converges to $\xi$ in $L^1_\omega L^1_{w;x}$, using again Theorem \ref{thm:uniq_exist},
we get that for every $n\geq 1$,
\begin{equs}
 \sup_{t\in [0,T]} \EE\|u(t;\xi_n) - u(t;\txi_n)\|_{L^1_{w;x}} \leq \EE\|\xi_n - \txi_n\|_{L^1_{w;x}}
\end{equs}
which shows that $v(\cdot;\xi)$ is independent of $(\xi_n)_{n\geq 1}$. It is easy to see that for $\xi\in L^{m+1}_\omega L^{m+1}_x$, $v(\cdot;\xi)$ coincides 
with $u(\cdot;\xi)$ in $C([0,T];L^1_\omega L^1_{w;x})$. Thus $v$ is the unique continuous (with respect to the variable $\xi$) extension of $u$ on $C([0,T];L^1_\omega L^1_{w;x})$. To ease the notation we
identify $u$ with $v$. Finally, \eqref{eq:v_contr} follows easily by construction.  
\end{proof}

\section{Proofs of ergodicity} \label{s:ergod_proof}

\subsection{Proof of Proposition \ref{prop:L^m+1_bddness}}

This is a simple application of Fatou's lemma combined with Lemma \ref{lem:supess_L^m+1} and Theorem \ref{thm:L^1_cont}. 

\begin{proof}[Proof of Proposition \ref{prop:L^m+1_bddness}] Let $t\in[0,\infty)$. By Theorem \ref{thm:L^1_cont} we know that $u(\cdot;\xi)\in C([0,\infty);L^1_\omega L^1_{w;x})$,
hence there exists a sequence $t_n\to t$ such that $u(t_n;\xi) \to u(t;\xi)$ for almost every $(\omega,x)$. By Lemma 
\ref{lem:supess_L^m+1} we can assume that
\begin{equs}
 \sup_{n\geq 1} (t_n\wedge 1)^{\frac{m+1}{m-1}} \EE\|u(t_n;\xi)\|_{L^{m+1}_x}^{m+1} \leq C
\end{equs}
for some $C>0$ depending only on $d$, $K$, $m$, and $|Q|$, but not on $t$ and $\xi$. Then, by Fatou's lemma we have that
\begin{equs}
 (t\wedge 1)^{\frac{m+1}{m-1}}\EE\|u(t;\xi)\|_{L^{m+1}_x}^{m+1} & \leq \liminf_{n\to\infty} (t_n\wedge 1)^{\frac{m+1}{m-1}} \EE\|u(t_n;\xi)\|_{L^{m+1}_x}^{m+1} 
 \leq \sup_{n\geq 1} (t_n\wedge 1)^{\frac{m+1}{m-1}} \EE\|u(t_n;\xi)\|_{L^{m+1}_x}^{m+1} \leq C
\end{equs}
which completes the proof since $t$ is arbitrary.
\end{proof}

\subsection{Proof of Theorem \ref{thm:pol_contr}} \label{s:pol_contr}

In this section we prove Theorem \ref{thm:pol_contr}. The proof is based on \eqref{eq:contraction_A} in Corollary \ref{cor:contraction}, which however
we have only shown for entropy solutions satisfying the $(\star)$-property \eqref{eq:star_prop} and for almost every $s<t$. For this reason we first use the approximations from Proposition
\ref{prop:viscoous-well-posedenss} which satisfy the $(\star)$-property and pass to the limit in \eqref{eq:contraction_A}. We then use the continuity of solutions in
$L^1_\omega L^1_{w;x}$ given by Theorem \ref{thm:L^1_cont} to derive the estimate for every $s<t$.   

\begin{proof}[Proof of Theorem \ref{thm:pol_contr}.] We first assume that $\xi, \txi\in L^{m+1}_\omega L^{m+1}_x$. By Theorem \ref{thm:stability} we can approximate $u(\cdot;\xi)$, $u(\cdot;\txi)$
in $L^1_{\omega,t}L^1_{w;x}$ by $u_n$, $\tu_n$ as in Proposition \ref{prop:viscoous-well-posedenss} satisfying the $(\star)$-property. By \eqref{eq:contraction_A} in Corollary 
\ref{cor:contraction} and by passing to the limit $n\to\infty$ (upon a subsequence) we obtain that for almost every $0\leq s<t\leq T$ (including $s=0$)
\begin{equs}
 & \EE\|u(t;\xi) - u(t;\txi)\|_{L^1_{w;x}} - \EE\|u(s;\xi) - u(s;\txi)\|_{L^1_{w;x}} \\ 
 & \quad \leq - \EE \int_s^t \||u(\tau;\xi)|^{m-1} u(\tau;\xi) - |u(\tau;\txi)|^{m-1} u(\tau;\txi)\|_{L^1_x} \dd \tau. 
\end{equs}
Since by Theorem \ref{thm:L^1_cont} we know that $u(\cdot;\xi), u(\cdot;\txi)\in C([0,T];L^1_\omega L^1_{w;x})$, the same estimate holds for every $0\leq s< t\leq T$.
Combining this estimate with Lemma \ref{lem:lwr_bd}, there exists $C>0$ depending only on $m$ such that
\begin{equs}
 \EE \|u(t;\xi) - \tilde u(t;\txi)\|_{L^1_{w;x}} - \EE \|u(s;\xi) - u(s;\txi)\|_{L^1_{w;x}}
 \leq - C \int_s^t \EE\|u(\tau;\xi) - u(\tau;\txi)\|_{L^m_x}^m \dd \tau
\end{equs}
and if we furthermore notice that 
\begin{align*}
 \left(\EE\|u(\tau;\xi) - u(\tau;\txi)\|_{L^1_{w;x}}\right)^m \leq \EE\|u(\tau;\xi) - u(\tau;\txi)\|_{L^m_x}^m \|w\|_{L^{m_*}_x}^m
\end{align*}
we finally obtain the integral inequality 
\begin{align*}
 \EE \|u(t;\xi) - u(t;\txi)\|_{L^1_{w;x}} - \EE \|u(s;\xi) - u(s;\txi)\|_{L^1_{w;x}} \leq 
 - C \|w\|_{L^{m_*}_x}^{-m} \int_s^t \left(\EE\|u(\tau;\xi) - u(\tau;\txi)\|_{L^1_{w;x}}\right)^m \dd \tau.
\end{align*}
Let $f(t) = \EE \|u(t;\xi) - u(t;\txi)\|_{L^1_{w;x}}$. Then $f$ satisfies
the integral inequality
\begin{align*}
 f(t) - f(s) \leq - C \|w\|_{L^{m_*}_x}^{-m} \int_s^t f(\tau)^m \dd \tau,
\end{align*}
for every $s\leq t$ and it is continuous since $u,\tu\in C([0,T];L^1_\omega L^1_{w;x})$. Hence, by Lemma \ref{lem:int_ineq} 
we obtain that $f(t) \leq h(t)$ where $h$ solves 
\begin{align*}
 \begin{cases}
  & h'(t) = - C \|w\|_{L^{m_*}_x}^{-m} h(t)^m \\
  & h(0) = f(0).
 \end{cases}
\end{align*}
A simple computation shows that
\begin{align*}
 h(t) = \left( \frac{1}{h(0)^{-(m-1)} +  C \|w\|_{L^{m_*}_x}^{-m} (m-1) t}\right)^{\frac{1}{m-1}}
\end{align*}
which in turn implies that
\begin{align*}
 \EE \|u(t;\xi) - u(t;\txi)\|_{L^1_{w;x}} \leq C \|w\|_{L^{m_*}_x}^{\frac{m}{m-1}} t^{-\frac{1}{m-1}}.
\end{align*}

If $\xi,\txi\in L^1_\omega L^1_{w;x}$, we approximate with sequences $(\xi_n)_{n\geq 1}, (\txi_n)_{n\geq 1}$ in $L^{m+1}_\omega L^{m+1}_x$ and use 
\eqref{eq:v_contr} to pass to the limit. 
\end{proof}

\subsection{Proofs of Proposition \ref{prop:Markov} and Theorem \ref{thm:pol_mix}} \label{s:pol_mix_proof}

To prove Proposition \ref{prop:Markov}, we first prove the flow property for entropy solutions (see Corollary \ref{cor:restart}). This uses the continuity of solutions
in $L^1_\omega L^1_{w;x}$ given by Theorem \ref{thm:L^1_cont}, in combination with an approximation argument in $L^{m+1}_\omega L^{m+1}_x$ in the spirit of \cite[Theorem 9.14]{DPZ14}, 
which strongly relies on the weighted $L^1$-contraction estimate \eqref{eq:v_contr}. For technical reasons in this section we extend the time horizon to $-\infty$.      

We need the following notation. For $\xi \in L^{m+1}_x$ and $s>-\infty$ we denote by $u_s(\cdot;\xi)$ the entropy solution to
\begin{equation} \label{eq:shifted_spm}
 \begin{cases}
  & \partial_t u_s(t,x;\xi) = \Delta\left(|u_s(t;\xi)|^{m-1}u_s(t;\xi)\right)(t,x) + \sigma^k(x,u_s(t,x;\xi)) \dot{\beta}^k(t) \\
  & u_s(s,x;\xi) = \xi \\
  & u_s|_{\partial Q} = 0
 \end{cases}
\end{equation}
for $t\geq s$, where we have extended $\beta^k(t)$ for $t<0$ by gluing at $t=0$ an independent Brownian motion evolving backwards in time. The existence and uniqueness of entropy solutions to this equation for $s=0$ is given by Theorem  \ref{thm:uniq_exist}. 
The case $s\neq 0$ follows analogously. To be consistent with the previous sections, we simply write $u(\cdot;\xi)$ to denote $u_0(\cdot;\xi)$. 

We have the following useful relation. 

\begin{corollary} \label{cor:restart} For every $\xi\in L^{m+1}_x$ and $-\infty < s_1\leq s_2 \leq t\leq T$ we have that $u_{s_1}(t;\xi) = u_{s_2}(t;u_{s_1}(s_2;\xi))$
in $L^1_\omega L^1_{w;x}$. 
\end{corollary}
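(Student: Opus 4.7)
The plan is to show that, for fixed $s_2 \in [s_1, T]$, the restriction $v(t) := u_{s_1}(t;\xi)$ on $t \in [s_2, T]$ is itself an entropy solution of the shifted porous medium SPDE on $[s_2, T]$ with initial value $u_{s_1}(s_2;\xi)$ at time $s_2$. Since $u_{s_1}(s_2;\xi)$ is $\mathcal{F}_{s_2}$-measurable and belongs to $L^{m+1}_\omega L^{m+1}_x$ (by the a priori bound \eqref{eq:uniform2_L^{m+1}} established in the proof of Theorem \ref{thm:uniq_exist}), the uniqueness part of Theorem \ref{thm:uniq_exist}, applied on $[s_2,T]$ with the shifted filtration $(\mathcal{F}_{s_2+s})_{s\geq 0}$, then identifies $v$ with $u_{s_2}(\cdot; u_{s_1}(s_2;\xi))$ $\PP$-a.s., and in particular in $L^1_\omega L^1_{w;x}$, which is the desired conclusion.

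To verify the entropy inequality for $v$ on $[s_2, T]$, I would plug into the entropy inequality of $u_{s_1}(\cdot;\xi)$ on $[s_1, T]$ the test function $\tilde\phi := \phi \chi_\eps$, where $(\eta, \phi)$ is admissible for the shifted problem with $\phi(t,x) = \varphi(t)\varrho(x)$, $\varphi \in C^\infty_c([s_2, T))$, $\varrho \in C^\infty_c(Q)$, and $\chi_\eps \in C^\infty([s_1, T]; [0,1])$ is a non-decreasing cutoff with $\chi_\eps \equiv 0$ on $[s_1, s_2 - \eps]$ and $\chi_\eps \equiv 1$ on $[s_2, T]$; for $\eps$ small, $\tilde\phi(s_1) = 0$, so the old initial-datum term disappears. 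Splitting $\partial_t \tilde\phi = (\partial_t\phi)\chi_\eps + \phi \chi_\eps'$ and letting $\eps \to 0$, the term involving $\chi_\eps'$ produces the new initial contribution $\int_Q \eta(u_{s_1}(s_2;\xi)) \phi(s_2) \dd x$ via the time-continuity $u_{s_1}(\cdot;\xi) \in C([s_1, T]; L^1_\omega L^1_{w;x})$ from Theorem \ref{thm:L^1_cont}, the global Lipschitz continuity of $\eta$ (ensured since $\supp \eta''$ is compact), and the boundedness of $\varrho/w$ on $\supp \varrho$ (as $\varrho$ has compact support in $Q$ and $w>0$ in $Q$). The Laplacian, It\^o-correction and dissipation terms converge by dominated convergence, and the stochastic integral converges by It\^o's isometry.

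The main obstacle is promoting the $L^1_\omega$-convergence of the $\chi_\eps'$-term to $\PP$-a.s. convergence, as required by the pointwise-in-$\omega$ formulation of Definition \ref{def:entr_sol}-\ref{item:entropies}. This can be handled by restricting $\eps$ to a countable dense subset of $(0,1)$, so that the entropy inequality with cutoff test function holds simultaneously on a single $\PP$-full-measure event for all rational $\eps$, and then extracting a subsequence $\eps_n \to 0$ along which the boundary-term convergence becomes $\PP$-a.s. With the shifted entropy inequality for $v$ on $[s_2, T]$ in hand, Theorem \ref{thm:uniq_exist} applied with the shifted filtration completes the proof.
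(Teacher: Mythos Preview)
Your proof is correct and follows essentially the same route as the paper's: insert a time cutoff into the entropy inequality on $[s_1,T]$, use the $L^1_\omega L^1_{w;x}$-continuity from Theorem~\ref{thm:L^1_cont} together with the Lipschitz bound on $\eta$ and the fact that $\varrho \lesssim w$ on $\supp\varrho$ to identify the boundary term at $s_2$, pass to a subsequence for $\PP$-a.s.\ convergence, and conclude via uniqueness. The paper uses a piecewise-linear cutoff $\chi_\kappa$ and argues the $L^{m+1}_\omega L^{m+1}_x$-membership of $u_{s_1}(s_2;\xi)$ through approximation and Fatou's lemma, whereas you invoke the a~priori bound \eqref{eq:uniform2_L^{m+1}} directly (which is legitimate here since $\xi \in L^{m+1}_x$ is deterministic and hence $\E\|\xi\|_{L^2_x}^4 < \infty$); these are cosmetic differences only.
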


\begin{proof} For simplicity we prove the statement for $(s_1,s_2) = (0,s)$. If $u$ is an entropy solution on $[0,T]$
and $\varphi\in C^\infty_c([s,T))$, which can be extended continuously on $[0,s]$ (taking the constant value $\varphi(s)$),
we can choose $\phi =  \chi_k \varphi \varrho$ in Definition \ref{def:entr_sol} with 
\begin{equation*}
  \chi_\kappa(t) = 
  \begin{cases}
   1, & t\in[s,T) \\
   \kappa\left(t- \left(s - \frac{1}{\kappa}\right)\right), & t\in \left[s-\frac{1}{\kappa}, s\right] \\ 
   0, & t\in \left[0, s-\frac{1}{\kappa}\right]
 \end{cases}
\end{equation*}
to obtain that
\begin{equs}
 - \int_0^T \int_Q \eta(u) \chi_\kappa \partial_t \varphi(t) \varrho \dd x \dd t & \leq \kappa\int_{s-\frac{1}{\kappa}}^s \int_Q \eta(u) \varphi(s) \varrho \dd x \dd t 
 + \int_0^T \int_Q \left[\eta'\fra^2\right](u) \chi_\kappa  \varphi \Delta\rho \dd x \dd t \\
 & \quad + \int_0^T \int_Q \left(\frac{1}{2} \chi_\kappa \varphi \varrho \eta''(u) |\sigma(u)|_{\ell^2}^2 - \chi_\kappa \varphi \varrho \eta''(u) |\nabla [\fra](u)|^2\right) \dd x \dd t \\
 & \quad + \int_0^T \int_Q \chi_\kappa \varphi \varrho \eta'(u) \sigma^k(u) \dd x \dd \beta^k(t) \label{eq:entr_sol_s}
\end{equs}
One can prove that $u(s)\in L^{m+1}_\omega L^{m+1}_x$. Indeed, we can approximate $u$ by $u_n$
as in Proposition \ref{prop:viscoous-well-posedenss} in $L^1_{\omega,t} L^1_{w;x}$, and hence for almost every $(\omega,t,x)$ up to a subsequence, and use 
\eqref{eq:uniform-m+1} and Fatou's lemma to obtain that 
\begin{equs}
 \supess_{t\in[0,T]} \EE\|u(t)\|_{L^{m+1}_x}^{m+1} \lesssim 1+ \EE\|\xi\|_{L^{m+1}_x}^{m+1}. 
\end{equs}
This estimate together with the fact that $u\in C([0,T];L^1_{w;x})$ implies that there exists a sequence $s_n \to s$ such that $u(s_n,x) \to u(s,x)$ for almost
every $(\omega,x)$ and 
\begin{equs}
 \sup_{n\geq 1} \EE\|u(s_n)\|_{L^{m+1}_x}^{m+1} \lesssim 1+ \EE\|\xi\|_{L^{m+1}_x}^{m+1}. 
\end{equs}
Then, again by Fatou's lemma, we have that
\begin{equs}
 \EE\|u(s)\|_{L^{m+1}_x}^{m+1} \lesssim \liminf_{n\to \infty} \EE\|u(s_n)\|_{L^{m+1}_x}^{m+1} \lesssim \sup_{n\geq 1} \EE\|u(s_n)\|_{L^{m+1}_x}^{m+1} 
 \lesssim 1+ \EE\|\xi\|_{L^{m+1}_x}^{m+1},
\end{equs}
which implies that $u(s)\in L^{m+1}_\omega L^{m+1}_x$. We also have the following estimate,
\begin{align*}
 \left|\kappa\int_{s-\frac{1}{\kappa}}^s \int_Q \eta(u) \varphi(s) \varrho \dd x \dd t - \int_Q \eta(u(s)) \varphi(s) \varrho \dd x \right| & = 
 \left|\kappa \int_{s-\frac{1}{\kappa}}^s \int_Q (\eta(u) - \eta(u(s))) \varphi(s) \varrho \dd x \dd t \right| \\
 & \lesssim \|\eta'\|_\infty \left|\kappa\int_{s-\frac{1}{\kappa}}^s \int_Q |u - u(s)| \varphi(s) \varrho \dd x \dd t \right|
\end{align*}
where (passing to a subsequence) the latter term converges to zero, $\PP$-almost surely. This is true since, by Theorem \ref{thm:L^1_cont}, $u$ is continuous in $L^1_\omega L^1_{w;x}$
(thus $\kappa\int_{s-\frac{1}{\kappa}}^s (\EE \int_Q |u - u(s)| w \dd x) \dd t \to 0$), $w$ is strictly positive in $Q$ and $\phi(s)$ has compact support in $Q$. 

Taking $\kappa\to \infty$ in \eqref{eq:entr_sol_s} (passing to a suitable subsequence) implies that $u$ is an entropy solution on $[s,T]$. Last, by uniqueness of entropy solutions and 
the continuity in $L^1_\omega L^1_{w;x}$ we conclude that $u(t)$ coincides with $u_s(t;u(s;\xi))$ in $L^1_\omega L^1_{w;x}$ for every $t\in[s,T]$, which completes the proof. 
\end{proof}

\begin{proof}[Proof of Proposition \ref{prop:Markov}] To prove that the map $P_t$ is a semigroup we need to show that $P_{t+s} = P_s P_t$, for every $0<s\leq t$. The argument follows 
\cite[Proof of Theorem 9.14]{DPZ14}. In particular, we prove that
\begin{equs}
 \EE\left(F(u(t+s;\xi))\big|\mathcal{F}_s\right) = \EE F(u_s(t;\zeta))\big|_{\zeta = u(s;\xi)} \label{eq:markov_1}
\end{equs}
$\PP$-almost surely, for every $\xi\in L^1_{w;x}$ and $F\in C_b(L^1_{w;x})$ (space of bounded continuous functions from $L^{1}_{w,x}$ to $\RR$), which implies the
result by virtue of the monotone class theorem. 

We first consider $\xi\in L^{m+1}_x$. By Corollary \ref{cor:restart} we have that the random variables $u(s+t;\xi)$ and 
$u_s(t;u(s;\xi))$ coincide in $L^1_\omega L^1_{w;x}$. Since $u(s;\xi)$ is $\mathcal{F}_s$-measurable and $u(s;\xi)\in L^{m+1}_\omega L^{m+1}_x$ (by the same argument as in
the proof of Corollary \ref{cor:restart}), it suffices to prove that   
\begin{equs}
 \EE\left(F(u_s(t;\zeta_0))\big|\mathcal{F}_s\right) = \EE F(u_s(t;\zeta))\big|_{\zeta = \zeta_0}, \label{eq:markov_2}
\end{equs}
$\PP$-almost surely, for every $\mathcal{F}_s$-measurable $\zeta_0\in L^{m+1}_\omega L^{m+1}_x$. If $\zeta_0$ is a simple random
variable, \eqref{eq:markov_2} follows easily. Otherwise, there exists a sequence of simple random variables $(\zeta_0^{(n)})_{n\geq 1}$,
such that $\zeta_0^{(n)} \to \zeta_0$ in $L^{m+1}_\omega L^{m+1}_x$ as $n\to \infty$, which in turn implies that $\zeta_0^{(n)} \to \zeta_0$ in 
$L^1_\omega L^1_{w;x}$. By \eqref{eq:markov_2} we know that for every $n\geq 1$
\begin{equs}
 \EE\left(F(u_s(t;\zeta_0^{(n)}))\big|\mathcal{F}_s\right) = \EE F(u_s(t;\zeta))\big|_{\zeta = \zeta_0^{(n)}}. \label{eq:markov_3}
\end{equs}
Using \eqref{eq:v_contr} (with $u$ replaced by $u_s$) we obtain the estimate
\begin{equs}
 \EE\|u_s(t;\zeta_0^{(n)}) - u_s(t;\zeta_0)\|_{L^1_{w;x}} \leq \EE \|\zeta_0^{(n)} - \zeta_0\|_{L^1_{w;x}}
\end{equs}
which implies that $u_s(t;\zeta_0^{(n)}) \to u_s(t;\zeta_0)$ in $L^1_\omega L^1_{w;x}$. Since $F$ is continuous and bounded the 
left hand side of \eqref{eq:markov_3} converges to $\EE\left(F(u_s(t;\zeta_0))\big|\mathcal{F}_s\right)$, $\PP$-almost surely passing to a subsequence.
On the other hand, the mapping $L^1_{w;x}\ni \zeta \mapsto \EE F(u_s(t;\zeta))$ is continuous. Indeed, for arbitrary $\zeta \in L^1_{w;x}$, let $(\zeta_n)_{n\geq 1}$
be a sequence in $L^1_{w;x}$ such that $\zeta_n \to \zeta$ in $L^1_{w;x}$. Then, for every subsequence $k_n\to \infty$ we can use
\eqref{eq:v_contr} (with $u$ replaced by $u_s$) and the continuity and boundedness of $F$ to find a further subsequence $m_{k_n}\to\infty$ such that $\EE F(u_s(t;\zeta_{m_{k_n}})) \to \EE F(u_s(t;\zeta))$.
Using the continuity of $L^1_{w;x}\ni \zeta \mapsto \EE F(u_s(t;\zeta))$ and the fact that $\zeta_0^{(n)} \to \zeta_0$ in $L^1_\omega L^1_{w;x}$, 
we conclude that the right hand side of \eqref{eq:markov_3} converges to $\EE F(u_s(t;\zeta))\big|_{\zeta = \zeta_0}$, $\PP$-almost surely.
Hence, we can the pass to the limit in \eqref{eq:markov_3} to obtain \eqref{eq:markov_2} for arbitrary
$\mathcal{F}_s$-measurable $\zeta_0\in L^{m+1}_\omega L^{m+1}_x$. This proves \eqref{eq:markov_1} for $\xi\in L^{m+1}_{x}$. 

Finally, if $\xi\in L^1_{w,x}$, we take any sequence $\xi_n \to \xi$ in $L^1_{w;x}$ such that $\xi_n\in L^{m+1}_x$. Using the same 
arguments as in the previous paragraph we obtain that $\EE\left(F(u(t+s;\xi_n))\big|\mathcal{F}_s\right) \to \EE\left(F(u(t+s;\xi))\big|\mathcal{F}_s\right)$,
$\PP$-almost surely (upon relabelling a subsequence), and that the mapping $L^1_{w;x}\ni \zeta \mapsto \EE F(u_s(t;\zeta))$ is continuous. This implies \eqref{eq:markov_1}
for $\xi\in L^1_{w,x}$. 

To prove that the semigroup $P_t$ is Feller we simply notice that if $\xi_n \to \xi$ in $L^1_{w;x}$ then, by \eqref{eq:v_contr}, for every subsequence $k_n \to \infty$,
there exists a further subsequence $m_{k_n} \to \infty$ such that $u(t;\xi_{m_{k_n}}) \to u(t;\xi)$ in $L^1_{w;x}$, $\PP$-almost surely. Then, by the continuity and boundedness of $F$,
$P_tF(\xi_{m_{k_n}}) \to P_tF(\xi)$.  
\end{proof}

We are now ready to prove Theorem \ref{thm:pol_mix}, following \cite[Proof of Theorem 4.3.9, Proof of Lemma 4.3.11]{Ro07}.

\begin{proof}[Proof of Theorem \ref{thm:pol_mix}] Step 1: For $\xi\in L^{m+1}_x$ we let $\eta_s(\xi) := u_s(0;\xi)$. By Corollary \ref{cor:restart} for every $s_1\leq s_2\leq -1$ we have that 
\begin{equation*}
 \eta_{s_1}(\xi) = u_{s_2}(0;u_{s_1}(s_2;\xi))
\end{equation*}
in $L^1_\omega L^1_{w;x}$. By Theorem \ref{thm:pol_contr} we know that
\begin{align*}
 \EE \|\eta_{s_2}(\xi) - \eta_{s_1}(\xi)\|_{L^1_{w;x}} \lesssim \|w\|_{L^{m_*}_x}^{m_*} |s_2|^{-\frac{1}{m-1}}.
\end{align*}
The last inequality implies that $(\eta_s(\xi))_{s\leq -1}$ is a Cauchy sequence in $L^1_\omega L^1_{w;x}$. Hence there exists a random variable 
$\eta(\xi)\in L^1_\omega L^1_{w;x}$ such that $\eta_s(\xi) \to \eta(\xi)$ as $s\to-\infty$. 

We claim that $\eta(\xi)$ is independent of $\xi$. Indeed, using again Theorem \ref{thm:pol_contr} we have that for any $\xi,\tilde \xi\in L^{m+1}_x$ 
\begin{align*}
 \EE\|\eta_s(\xi) - \eta_s(\txi)\|_{L^1_{w;x}} \lesssim \|w\|_{L^{m_*}_x}^{m_*} |s|^{-\frac{1}{m-1}} 
\end{align*}
and letting $s\to-\infty$ asserts our claim.  

We now let $\mu = \mathcal{L}(\eta)\in \mathcal{M}_1(L^1_{w;x})$ for $\eta = \eta(0)$. Then $\mu\in \mathcal{M}_1(L^{m+1}_x)$ since 
\begin{align*}
 \EE\|\eta\|_{L^{m+1}_x}^{m+1} \leq \liminf_{s\to-\infty} \EE\|\eta_s(0)\|_{L^{m+1}_x}^{m+1} \leq \sup_{s\leq -1} \EE\|\eta_s(0)\|_{L^{m+1}_x}^{m+1}  
\end{align*}
and the last quantity is bounded by Proposition \ref{prop:L^m+1_bddness}. Similarly to Definition \ref{def:semigroup} we denote by 
$P_{s,t}$ the semigroup associated to \eqref{eq:shifted_spm} at time $t$. In this notation $P_t = P_{0,t}$. Then one has the following
elementary calculation, 
\begin{align*}
 \int_{L^1_{w;x}} P_{0,t}F(\xi) \mu(\mathrm{d} \xi) & = \lim_{s\to\infty} P_{-s,0}(P_{0,t}F)(0)  = \lim_{s\to\infty} P_{-s,t}F(0)
 = \lim_{s\to\infty} P_{-(t+s),0}F(0) = \int_{L^1_{w;x}} F(\xi) \mu(\mathrm{d} \xi)
\end{align*}
for every $F\in C_b(L^1_{w;x})$ (bounded continuous functions from $L^{1}_{w,x}$ to $\RR$), where we also use that $P_{s,t}$ is Feller for 
every $s<t$, as well as the identities $P_{s,t} = P_{s+\tau,t+\tau}$, for every $\tau\in \RR$ and $P_{s,\tau}P_{\tau,t}=P_{s,t}$ for every $s<\tau<t$.
This implies that $P_t^*\mu = \mu$ as measures on $L^1_{w;x}$, hence on $L^{m+1}_x$ since both $P_t^*\mu$ and $\mu$ are supported on $L^{m+1}_x$. 

\smallskip

Step 2: By Theorem \ref{thm:pol_contr}, for every $F\in \mathrm{Lip}(L^1_{w;x})$ and $\xi,\txi\in L^1_{w;x}$ we have that
\begin{align*}
 \left|P_tF(\xi) -P_tF(\txi)\right| \lesssim \|F\|_{\mathrm{Lip}(L^1_{w;x})} \|w\|_{L^{m_*}_x}^{m_*} t^{-\frac{1}{m-1}} 
\end{align*}
which implies that any two invariant measures $\mu$ and $\tilde \mu$ on $L^1_{w;x}$ coincide. Using the last estimate we also see that
\begin{equs}
 \left|P_tF(\xi) - \int_{L^1_{w;x}} F(\txi) \, \mu(\mathrm{d} \txi) \right| \lesssim \|F\|_{\mathrm{Lip}(L^1_{w;x})} \|w\|_{L^{m_*}_x}^{m_*} t^{-\frac{1}{m-1}} 
\end{equs}
which completes the proof by taking the supremum over $\|F\|_{\mathrm{Lip}(L^1_{w;x})}\leq 1$ and $\xi\in L^1_{w;x}$.
\end{proof}

\pdfbookmark[0]{Appendix}{appendix}
\appendix
\appendixpage 
%\addappheadtotoc
\addtocontents{toc}{\protect\contentsline{section}{Appendix}{\thepage}{appendix.0}}

\stepcounter{section}
\hypertarget{ap:A}{}
\section*{Appendix \Alph{section}}

In this appendix we prove Proposition \ref{prop:viscoous-well-posedenss} using Galerkin approximations as in \cite{DG18}. We also prove Lemma \ref{lem:supess_L^m+1}, which is
used in the proof of Proposition \ref{prop:L^m+1_bddness}.  

\begin{proof}[Proof of Proposition \ref{prop:viscoous-well-posedenss}] We fix $n\geq 1$, and since $n$ is fixed, in order to ease the notation
we drop the dependence on $n$ and we relabel $\bar A:= A_n$, $\bar \sigma= \sigma_n$,  $\bar \xi := \xi_n$, and $u:=u_n$. Let 
$(e_k)_{k=1}^\infty \subset H^4_x\cap H^2_{0,x}$ be an 
orthonormal basis of $L^2_x$ consisting of eigenvectors of $\Delta^2$. For $m=1,2$ let us denote by $H^{-m}_x$ the dual of $H^m_{0;x}$
equipped with the inner product $(\cdot, \cdot)_{H^{-m}_x}:=(\Delta^{-m} \cdot, \Delta^{-m}\cdot)_{H^m_{0;x}}$ and let 
$\Pi_l : H^{-1}_x \to V_l:= \mathrm{span}\{e_1,...,e_l\}$ be the projection operator, that is, for $v \in H^{-1}_x$
\begin{equs}
\Pi_l v:= \sum_{i=1}^l {}_{H^{-1}} \langle v, e_i \rangle_{H^1_{0;x}} e_i.
\end{equs}
Then, the Galerkin approximation 
\begin{equs}          
\begin{cases} \label{eq:Galerkin}
& du_l = \Pi_l  \Delta \bar{A}(u_l)  \dd t 
+ \Pi_l \bar{\sigma}^{k}(u_l) \dd \beta^k(t) 
\\
& u(0) = \Pi_l \bar\xi,
\end{cases}
\end{equs}
is an equation on $V_l$ with locally Lipschitz continuous coefficients having linear growth. Consequently, it admits a unique solution $u_l$, 
for which we have  
\begin{equation*}
u_l \in  L^2_{\omega,t}H^1_{0;x} \cap L^2_\omega C([0,T] ; L^2_x).
\end{equation*}
After applying It\^o's formula for the function $u \mapsto \|u\|_{L^2_x}^2$, we obtain by standard arguments that
\begin{equation} \label{eq:est-un-H1}
 \E \int_0^T \|u_l \|^2_{H^1_{0;x}} \dd t  \leq  C \left(1+\E \|\bar \xi\|_{L^2_x}^2\right),
\end{equation}
and that for every $p\geq 2$
\begin{equation} \label{eq:bound-L2^p}
\E \sup_{t \leq T }\|u_l(t)\|_{L^2_x}^p \leq C \left(1+\E \|\bar \xi\|_{L^2_x}^p\right),
\end{equation}
where $C$ is independent of $l$. In $H^{-1}_x$ we have $\PP$-almost surely, for every $t \in [0,T]$
\begin{align*}
u_l(t)& = \Pi_l \bar\xi + \int_0^t \Pi_l \Delta  \bar{A}(u_l) \dd s
+ \int_0^t \Pi_l \bar{\sigma}^k(u_l) \dd \beta^k(s) =J^1_l+J^2_l(t)+J^3_l(t).
\end{align*}
By Sobolev's embedding theorem and \eqref{eq:est-un-H1} 
\begin{equation*}
 \sup_l \E \|J^2_l\|^2_{W^{\frac{1}{3},4}_tH^{-1}_x} \leq \sup_l \E \|J^2_l \|^2_{H^{1}_t H^{-1}_x}  < \infty.
\end{equation*}
By \cite[Lemma 2.1]{FG95}, the linear growth of $\sigma$ and \eqref{eq:bound-L2^p} we have 
\begin{equation*}
\sup_l \E \|J^3_l\|^p_{W^{\alpha,p}_t H^{-1}_x} < \infty
\end{equation*}
for every $\alpha \in (0, \frac{1}{2})$ and $p \geq 2$. By the last two estimates and \eqref{eq:est-un-H1} we obtain 
\begin{equation*}
\sup_l \E \|u_l\|_{W^{\frac{1}{3},4}_t H^{-1}_x \cap L^2_t H^1_{0;x}} < \infty.
\end{equation*}
By virtue of \cite[Theorems 2.1 and 2.2]{FG95} one can easily see that the embedding
\begin{equs}
W^{\frac{1}{3},4}_t H^{-1}_x \cap L^2_t H^1_{0;x}
\hookrightarrow L^2_t L^2_x \cap C([0,T];H^{-2}_x) =:\mathcal{X}
\end{equs}
is compact. It follows that for any sequences $(l_q)_{q \geq 1}$, $(\bar{l}_q)_{q \geq 1}$, the laws of $(u_{l_q}, u_{\bar{l}_q})_{q\geq 1}$ are 
tight on $\mathcal{X} \times \mathcal{X}$. Let us set 
\begin{equation*}
\beta(t)= \sum_{k=1}^\infty\frac{1}{\sqrt{2^k}}\beta^k(t)\mathfrak{e}_k,
\end{equation*}
where $(\mathfrak{e}_k)_{k=1}^\infty$ is the standard orthonormal basis of $\ell^2$. By Prokhorov's theorem, there exists a (non-relabelled) subsequence
$(u_{l_q}, u_{\bar l_q})_{q\geq 1}$ such that the laws of $(u_{l_q}, u_{\bar l_q}, \beta)_{q\geq 1}$ on $\mathcal{Z}:= \mathcal{X} \times \mathcal{X} \times C([0,T]; \ell^2)$ 
are weakly convergent. By Skorohod's representation theorem, there exist $\mathcal{Z}$-valued random variables $(\hat{u}, \check{u}, \tilde{\beta})$, $(\widehat{u_{l_q}}, \widecheck{u_{\bar l_q}}, \tilde{\beta}_q)$, $q\geq 1$, 
on a probability space $(\tilde{\Omega}, \tilde{\mathcal{F}}, \tilde{\bP})$  such that in $\mathcal{Z}$, $\tilde{\bP}$-almost surely,
\begin{equation} \label{eq:convergence-in-Z}
(\widehat{u_{l_q}}, \widecheck{u_{\bar l_q}}, \tilde{\beta}_q)\to (\hat{u}, \check{u}, \tilde{\beta}),
\end{equation}
as $l \to \infty$, and for each $q \geq 1$, as random variables in $\mathcal{Z}$
\begin{equation} \label{eq:distribution}
 (\widehat{u_{l_q}}, \widecheck{u_{\bar l_q}}, \tilde{\beta}_q)\overset{d}{=}(u_{l_q}, u_{\bar l_q}, \beta).
\end{equation}
Moreover, upon passing to a non-relabelled subsequene, we may assume that 
\begin{equation} \label{eq:almost-everywhere}
 (\widehat{u_{l_q}}, \widecheck{u_{\bar l_q}}) \to (\hat{u}, \check{u}),
\end{equation}
for almost every $(\tilde{\omega}, t,x)$. Let $(\tilde{\mathcal{F}}_t)_{t \in [0,T]}$ be the augmented filtration of $\mathcal{G}_t:= \sigma( \hat{u}(s), \check{u}(s), \tilde \beta(s); s \leq t)$, 
and let $\tilde{\beta}^k(t):= \sqrt{2^k}(\tilde{\beta}(t), \mathfrak{e}_k)_{\ell^2}$. It is easy to see that $\tilde{\beta}^k$, $k \geq 1$, are independent, standard, real-valued $\tilde{\mathcal{F}}_t$-Wiener processes.
Indeed, they are $\tilde{\mathcal{F}}_t$-adapted by definition and they are independent since $\beta^k$ are. We only have to show that they are $\tilde{\mathcal{F}}_t$-Wiener processes. Let us fix $s < t$ and let $V$ be
a bounded continuous function on $C([0,s]; H^{-2}_x) \times C([0,s]; H^{-2}_x) \times C([0, s] ; \ell^2)$. For each $l \geq 1$ we have 
\begin{equs}
\tilde{\E} \left(\tilde{\beta}^k_q(t)-\tilde{\beta}^k_q(s)\right)V\left(\widehat{u_{l_q}}|_{[0,s]}, \widecheck{u_{\bar l_q}}|_{[0,s]}, {\tilde{\beta}_q}|_{[0,s]}\right)
= \E\left(\beta^k(t)-\beta^k(s)\right)V\left( u_{l_q}|_{[0,s]}, u_{\bar l_q}|_{[0,s]}, \beta|_{[0,s]}\right)=0,
\end{equs}
which by using uniform integrability and  passing to the limit $q \to \infty$ shows that $\tilde{\beta}^k(t)$ is a $\mathcal{G}_t$-martingale.  
Similarly, $|\tilde{\beta}^k(t)|^2-t$  is a $\mathcal{G}_t$-martingale. By continuity of $\tilde{\beta}^k(t)$ and $|\beta^k(t)|^2-t$, and the fact that
their supremum in time is integrable in $\omega$, one can easily see that they are also $\tilde{\mathcal{F}}_t$-martingales. Hence, by L\'evy's 
characterisation theorem (see, e.g., \cite[Theorem 3.16]{KS91}) $\tilde{\beta}^k$ are $\tilde{\mathcal{F}}_t$-Wiener processes.

We now show that $\hat{u}$ and $\check{u}$ both satisfy the equation 
\begin{equs}
 dv & = \Delta \bar{A}(v) \, dt 
 + \bar{\sigma}^{k}(x,v)\dd \tilde{\beta}^k(t).
\end{equs}
Notice that due to \eqref{eq:est-un-H1}, we have that $\hat{u} \in L^2_{\tilde{\omega},t} H^1_{0;x}$. Let us set
\begin{equs}
\nonumber
\hat{M}(t) &:= \hat{u}(t)- \hat{u}(0)-\int_0^t \Delta \bar{A}(\hat u) \dd s
\\
\hat{M}_q(t) &:= \widehat{u_{l_q}}(t)- \widehat{u_{l_q}}(0)-\int_0^t  \Pi_{l_q}\Delta \bar{A}(\widehat{u_{l_q}}) \dd s
\\
M_q(t) &:= u_{l_q}(t)- u_{l_q}(0)-\int_0^t   \Pi_{l_q}\Delta \bar{A}(u_{l_q}) \dd s.
\end{equs}
We will show that for any $\phi \in H^{-2}_x$ and $k \geq 1$, the processes 
\begin{equs}
 \hat M^1(t) &:= (\hat M(t), \phi)_{H^{-2}},
 \\
 \hat M^2(t) &:= (\hat M(t), \phi)^2_{H^{-2}}-\int_0^t \sum_{k=1}^\infty | (\bar{\sigma}^{k}(\hat{u}), \phi)_{H^{-2}}|^2 \dd s,
\end{equs}
and 
\begin{equation*}
\hat M^{3,k}(t):=\tilde{\beta}^k(t)(\hat M(t), \phi)_{H^{-2}_x}- \int_0^t (\bar{\sigma}^{k}(\hat{u}), \phi)_{H^{-2}} \dd s
\end{equation*}
are continuous $\tilde{\mathcal{F}}_t$-martingales. We first show that they are continuous $\mathcal{G}_t$-martingales. Assume for 
now that $\phi = \Delta^2\psi$, where $\psi \in V_{l_q}$. For, $i=1,2,3$, let us also define the processes $\hat M^i_q, M^i_q$ similarly 
to $\hat M^i$, but with $\hat M$, $\hat u$, $\bar{\sigma}^{k}(\cdot)$ replaced by $\hat M_q, \widehat{u_{l_q}}$, $\Pi_{l_q}\bar{\sigma}^{k}(\cdot)$ and $M_q, u_{l_q}$, $\Pi_{l_q} \sigma^{k}(\cdot)$, 
respectively. Let us fix $s < t$ and let $V$ be a bounded continuous function on $C([0,s]; H^{-2}_x) \times C([0, s] ; \ell^2)$. We have that 
\begin{equs}
 (M_q(t), \phi)_{H^{-2}}&= \int_0^t (\Pi_{l_q}\bar{\sigma}^{k}(u_{l_q}), \phi)_{H^{-2}_x} \dd \beta^k(s).
\end{equs}
It follows that $M^i_q$ are continuous $\mathcal{F}_t$-martingales. Hence, 
\begin{equation*}
 \E V\left(u_{l_q}|_{[0,s]},u_{\bar l_q}|_{[0,s]}, \beta|_{[0,s]}\right)\left(M_q^i(t)-M_q^i(s)\right)=0,
\end{equation*}
which combined with \eqref{eq:distribution} gives 
\begin{equation} \label{eq:martingale-Ml}
 \tilde{\E}V\left(\widehat{u_{l_q}}|_{[0,s]}, \widecheck{u_{\bar l_q}}|_{[0,s]},{\tilde{\beta}_q}|_{[0,s]}\right) \left(\hat M_q^i(t)-\hat M_q^i(s)\right) =0.
\end{equation}
Next, notice that 
\begin{equs}
 \tilde{\E} \int_0^T\left| \left( \Pi_{l_q}\Delta \bar{A}(\widehat{u_{l_q}})- \Delta \bar{A}(\hat u), \phi \right)_{H^{-2}_x} \right| \dd t 
 & =  \tilde{\E} \int_0^T \left| \left(\bar{A}(\widehat{u_{l_q}})- \bar{A}(\hat{u}), \Delta \psi \right)_{L^2_x} \, \right| \dd t  
 \\
 & \leq  \tilde{\E} \int_0^T \| \hat{u}- \widehat{u_{l_q}}\|_{L^2_x} \to 0, \label{eq:conPhi}
\end{equs}
where the convergence follows from  \eqref{eq:convergence-in-Z} and the fact that $\left(\int_0^T \|\widehat{u_{l_q}}-\hat{u}\|_{L^2_x}\dd t\right)_{q\geq 1}$ is uniformly integrable 
on $\Omega$ (which in turn follows from \eqref{eq:est-un-H1}). Hence, by \eqref{eq:conPhi} and \eqref{eq:convergence-in-Z} we see that for each $t \in [0,T]$
\begin{equation} \label{eq:MltoM-in-probability}
 (\hat M_q(t), \phi)_{H^{-2}_x} \to (\hat M(t), \phi)_{H^{-2}_x}
\end{equation} 
in probability. Then, one can easily verify that $\hat{M}^i_q(t) \to \hat M^i(t)$ in probability. Moreover, for every $\phi \in H^{-2}_x$ and 
every $p \geq 2$ we have, by \eqref{eq:distribution} and \eqref{eq:bound-L2^p},
\begin{equs}
\sup_q \tilde{\E}| (\hat{M}_q(t), \phi)_{H^{-2}_x}|^p&= \sup_q \E \left|   \int_0^t (\Pi_{l_q} \bar{\sigma}^{k}(u_{l_q}), \phi)_{H^{-2}_x} \dd \beta^k(s)\right|^p
\\
&\le \|\phi\|_{H^{-2}_x}^p \E( 1+ \|\bar\xi\|_{L^2_x}^p).
\end{equs}
From this, one easily deduces that for each $i=1,2,3$, and  $t\in [0,T]$, $M^i_q(t)$ are uniformly integrable. Hence, we can pass to the limit in
\eqref{eq:martingale-Ml} to obtain
\begin{equation} \label{eq:martingale-property}
\tilde{\E}V\left(\hat u|_{[0,s]},\check u|_{[0,s]} {\tilde{\beta}}|_{[0,s]}\right)\left(\hat M^i(t)-\hat M^i(s)\right) =0.
\end{equation}
In addition, using the continuity of $\hat M^i(t)$ in $\phi$, uniform integrability, and the fact that $\cup_q \Delta^2 V_{l_q}$ is dense in $H^{-2}_x$, it follows that
\eqref{eq:martingale-property} holds also for every $\phi \in H^{-2}_x$. Hence, for every $\phi \in H^{-2}_x$, $\hat{M}^i$ are continuous $\mathcal{G}_t$-martingales having all
moments finite. In particular, by Doob's maximal inequality, they are uniformly integrable (in $t$), which combined with continuity (in $t$) implies that they are also $\tilde{\mathcal{F}}_t$-martingales.
By \cite[Proposition A.1]{Hof13} we obtain that $\tilde{\PP}$-almost surely, for every $\phi \in H^{-2}_x$ and $t \in [0,T]$.
\begin{equs}
(\hat{u}(t), \phi)_{H^{-2}}&= (\hat{u}(0), \phi)_{H^{-2}_x} 
+\int_0^t (\Delta \bar{A}(\hat{u}), \phi)_{H^{-2}_x}\dd s + \int_0^t (\bar{\sigma}^{k}(\hat u), \phi)_{H^{-2}_x} \dd \tilde{\beta}^k(s).
\end{equs}
Notice that $\hat{u}(0)\overset{d}{=} \bar \xi$, which implies that $\hat{u}(0) \in L^{m+1}_x$ $\tilde{\PP}$-almost surely.
Choosing $\phi =\Delta^2 \psi$ for $\psi \in C_c^\infty(Q)$, we obtain that for almost every $(\tilde{\omega}, t)$
\begin{equs}
 (\hat{u}(t), \psi)_{L^2_x}&= (\hat{u}(0), \psi)_{L^2_x}-\int_0^t  \left(\nabla \bar{A}(\hat{u}), \nabla  \psi\right)_{L^2_x} \dd s
 + \int_0^t ( \bar{\sigma}^{k}(\hat u) ,  \psi)_{L^2_x} \dd \tilde{\beta}^k(s).
\end{equs}
It follows (see \cite{KR79}) that $\hat{u}$ is a continuous $L^2_x$-valued $\tilde{\mathcal{F}}_t$-adapted process. Hence, $\hat u$ is an $L^2_x$-solution
of $\mathcal{E}(\bar A, \hat \sigma, \hat \xi )$ on $(\tilde{\Omega}, (\tilde{\mathcal{F}}_t)_t , \tilde{\bP})$ with driving noise $(\tilde{\beta}^k)_{k=1}^\infty$, where $\hat{\xi}:= \hat{u} (0)$. 
Again, by standard arguments, for every $p \geq 2$ one has the estimate
\begin{equs}
\E \sup_{t \leq T }\|\hat u(t)\|_{L^p_x}^p+\E \int_0^T \int_{Q}|\hat u|^{p-2} |\nabla \hat u|^2 \dd x \dd t  \leq N (1+\E \|\bar \xi\|_{L^2_x}^p).
\end{equs}
Using this, It\^o's formula (see, e.g., \cite{Kry13}) for the function $u \mapsto \int_Q \eta(u)\varrho \, dx$, and It\^o's product rule, one can see that $\hat u$ is 
an entropy solution of $\mathcal{E}(\bar A, \hat \sigma, \hat \xi )$ on $(\tilde{\Omega}, (\tilde{\mathcal{F}}_t)_t , \tilde{\bP})$ with driving noise $(\tilde{\beta}^k)_{k=1}^\infty$ and initial condition $\hat{\xi}:= \hat{u} (0)$.  
In the exact same way $\check{u}$ is an $L^2_x$-solution and an entropy solution of $\mathcal{E}( \bar A, \bar \sigma,  \check \xi)$ (again, on $(\tilde{\Omega}, (\tilde{\mathcal{F}}_t)_t , \tilde{\bP})$) 
with driving noise $(\tilde{\beta}^k)_{k=1}^\infty$ and $\check \xi:= \check u (0)$. Furthermore, we have for $\delta>0$
\begin{equs}
\tilde \bP(\| \hat \xi - \check \xi \|_{H^{-2}_x}> \delta) & \leq \delta^{-1} \tilde \E \|\hat \xi - \check \xi \|_{H^{-2}_x}
\\
& \leq \liminf_{q \to \infty} \delta^{-1} \tilde \E \|\widehat{u_{l_q}} (0) - \widecheck{u_{\bar l_q}}(0)\|_{H^{-2}_x}
\\
&= \liminf_{q \to \infty} \delta^{-1}  \E \|\Pi_{l_q}\bar \xi - \Pi_{\bar l_q}\bar \xi\|_{H^{-2}_x} =0.
\end{equs}
Hence $\hat{u}$ and $\check{u}$ are both entropy solutions with the same initial condition. Moreover, by Lemma \ref{lem:strong entropy} they have the $(\star)$-property \eqref{eq:star_prop}. Hence, by Corollary \ref{eq:contraction}
we conclude that $\hat{u}= \check{u}$. By \cite[Lemma 1.1]{GK96} we have that the initial sequence $(u_l)_{l=1}^\infty$ converges in probability in $\mathcal{X}$ to some $u \in \mathcal{X}$. Using this convergence and the uniform 
estimates on $u_l$, it  is then straightforward to pass to the limit in \eqref{eq:Galerkin} and see that $u$ is indeed an $L^2_x$-solution.
\end{proof}

\begin{lemma} \label{lem:supess_L^m+1} Let $u$ be an entropy solution of $\mathcal{E}(A, \sigma, \xi)$ for $A(r) = |r|^{m-1}r$. There exists $C>0$ depending only
on $d$, $k$, $m$, and $|Q|$, but not on $T$ and $\xi$ such that 
\begin{equs}
 \supess_{t \in [0,T]} (t\wedge1)^{\frac{m+1}{m-1}} \E \|u(t)\|_{L^{m+1}_x}^{m+1} \leq C.
\end{equs}
%
% Also, there exists a constant $C>0$ independent of $T$ (but depending on $\xi$) such that
% %
% \begin{equs}
%  %
%  \supess_{t \in [0,T]} \E \|u(t)\|_{L^{m+1}_x}^{m+1} \leq C. 
%  %
% \end{equs}
% %
\end{lemma}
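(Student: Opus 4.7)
The plan is to derive a closed differential inequality of the form $f'(t) \leq -C_1 f(t)^p + C_2 f(t) + C_3$ for $f(t) := \E\|u(t)\|_{L^{m+1}_x}^{m+1}$ with $p := 2m/(m+1) > 1$, and then run a scalar ODE comparison argument whose conclusion is insensitive to the value of $f(0)$. The essential feature is $p > 1$: this produces the ``coming down from infinity'' property with the sharp rate $t^{-1/(p-1)} = t^{-(m+1)/(m-1)}$ on $(0,1]$ and uniform boundedness on $[1,\infty)$, which is exactly what the lemma asserts.

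To obtain the inequality I would apply It\^o's formula to the functional $v \mapsto (m+1)^{-1}\int_Q |v|^{m+1}\,dx$. At the level of an entropy solution this is too singular to apply directly, so I would work instead with the smooth approximations $u_n$ from Proposition \ref{prop:viscoous-well-posedenss}, derive the estimate uniformly in $n$, and pass to the limit using the almost everywhere convergence $u_n \to u$ established in the proof of Theorem \ref{thm:uniq_exist}, combined with Fatou's lemma on the left-hand side and weak lower semicontinuity of the dissipation term. Formally, taking expectations yields
\begin{equs}
\frac{d}{dt}\E\|u\|_{L^{m+1}_x}^{m+1} = -(m+1)\E\|\nabla(|u|^{m-1}u)\|_{L^2_x}^2 + \tfrac{m(m+1)}{2}\E\int_Q |u|^{m-1}|\sigma(x,u)|_{\ell^2}^2\,dx,
\end{equs}
and the It\^o correction term is controlled by $C(|Q| + \E\|u\|_{L^{m+1}_x}^{m+1})$ using the linear growth of $\sigma$ from Assumption \ref{as:noise} together with Young's inequality $|u|^{m-1} \leq c_m + c_m |u|^{m+1}$.

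For the dissipation, since $|u|^{m-1}u = A(u) \in H^1_{0;x}$ by Definition \ref{def:entr_sol}, Poincar\'e's inequality gives $\|\nabla(|u|^{m-1}u)\|_{L^2_x}^2 \geq \lambda_1(Q)\|u\|_{L^{2m}_x}^{2m}$, and H\"older's inequality yields $\|u\|_{L^{m+1}_x}^{m+1} \leq |Q|^{(m-1)/(2m)} \|u\|_{L^{2m}_x}^{m+1}$, so $\|u\|_{L^{2m}_x}^{2m} \geq |Q|^{-(m-1)/(m+1)}(\|u\|_{L^{m+1}_x}^{m+1})^{p}$. Taking expectations and invoking Jensen's inequality (legitimate because $p>1$) lifts this to $\E\|u\|_{L^{2m}_x}^{2m} \geq |Q|^{-(m-1)/(m+1)} f(t)^p$. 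Combining with the bound on the It\^o correction delivers the announced differential inequality
\begin{equs}
f'(t) \leq -C_1 f(t)^p + C_2 f(t) + C_3,
\end{equs}
with constants depending only on $d$ (through a Faber--Krahn lower bound for $\lambda_1(Q)$), $K$, $m$, and $|Q|$.

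A standard comparison argument with the scalar ODE $g' = -C_1 g^p + C_2 g + C_3$ then closes the proof: the polynomial $h(g) := C_1 g^p - C_2 g - C_3$ has a unique positive root $g_*$ and satisfies $h(g) \geq (C_1/2) g^p$ once $g$ exceeds some threshold $M_* \geq g_*$, so on the regime $\{g \geq M_*\}$ the bound $g' \leq -(C_1/2) g^p$ integrates to $g(t) \leq ((C_1/2)(p-1) t)^{-1/(p-1)}$ independently of $g(0)$, while once $g$ drops below $M_*$ the ODE keeps it trapped below $M_*$ thereafter. Taking $f(0) = +\infty$ in the comparison gives $f(t) \leq \tilde C(1 + t^{-(m+1)/(m-1)})$ uniformly in $\xi$ and $T$, which is equivalent to the stated estimate $(t\wedge 1)^{(m+1)/(m-1)} f(t) \leq C$. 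The main technical obstacle I expect is the rigorous justification of It\^o's formula for entropy solutions; I would handle it by transporting the estimate through the approximations $u_n$ of Proposition \ref{prop:viscoous-well-posedenss}, for which It\^o is classical and for which the uniform $L^{m+1}$-bounds needed to pass to the limit are already encoded in \eqref{eq:uniform-m+1} and the convergence scheme used in the proof of Theorem \ref{thm:uniq_exist}.
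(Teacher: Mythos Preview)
Your overall strategy---It\^o's formula for $\|u\|_{L^{m+1}_x}^{m+1}$, Poincar\'e plus Jensen to extract the superlinear dissipation $-C_1 f^{2m/(m+1)}$, and then an ODE comparison---is exactly the paper's approach. The only substantive difference is the choice of approximating sequence, and this is where your proposal has a gap.

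You invoke the approximations $u_n$ from Proposition~\ref{prop:viscoous-well-posedenss}, which solve $\mathcal{E}(A_n,\sigma_n,\xi_n)$ with $A_n$ taken from Proposition~\ref{prop:Phi-n}. But those $A_n$ are constructed to have \emph{bounded} derivative (this is stated explicitly in Proposition~\ref{prop:Phi-n}), so $A_n'(r)\le C_n$ and in particular $A_n'(r)$ does \emph{not} dominate $|r|^{m-1}$ for large $|r|$. Applying It\^o's formula to $\|u_n\|_{L^{m+1}_x}^{m+1}$ then gives the dissipation term
\[
-(m+1)m\,\E\!\int_Q |u_n|^{m-1}A_n'(u_n)|\nabla u_n|^2\,\dd x,
\]
which, since $A_n'$ is bounded, is only of order $\E\|\nabla |u_n|^{(m+1)/2}\|_{L^2_x}^2$ rather than $\E\|\nabla(|u_n|^{m-1}u_n)\|_{L^2_x}^2$. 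Poincar\'e then yields $\|u_n\|_{L^{m+1}_x}^{m+1}$ on the right, not $\|u_n\|_{L^{2m}_x}^{2m}$, and you end up with $f_n'\le -C_{1,n}f_n+C_2 f_n+C_3$ (a linear inequality with $n$-dependent constants) instead of the superlinear $f_n'\le -C_1 f_n^{2m/(m+1)}+C_2 f_n+C_3$ that drives the coming-down-from-infinity. The $\xi$-independent bound cannot be obtained this way.

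The paper avoids this by using a \emph{different} approximation: it takes $A_n(r)=|r|^{m-1}r+n^{-1}r$, keeping $\sigma$ and $\xi$ unchanged. This $A_n$ still has $A_n'\ge n^{-1}>0$ (so the equation is uniformly parabolic and admits an $L^2_x$-solution to which It\^o applies), but now $A_n'(r)\ge m|r|^{m-1}$, so the dissipation term dominates $(m+1)\E\|\nabla(|u_n|^{m-1}u_n)\|_{L^2_x}^2$ uniformly in $n$. From there your Poincar\'e/Jensen step and ODE comparison go through verbatim, and one passes to the limit via Theorem~\ref{thm:stability} and Fatou. So the fix is a one-line change of approximation; the rest of your argument is correct and identical to the paper's.
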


\begin{proof}
Let $u_n$ be the unique entropy solution of $\mathcal{E}(A_n, \sigma, \xi)$, where $A_n(r)=|r|^{m-1}r+n^{-1}r$. It is easy to see by construction that $u_n$ is an $L^2_x$-solution (see Definition \ref{def:L^2_sol}). 
By It\^o's formula we see that
\begin{equs}
 \E\|u_n(t)\|_{L^{m+1}_x}^{m+1} & = \E\|\xi\|_{L^{m+1}_x}^{m+1} -\E \int_0^t\left((m+1)\| \nabla u^m_n\|_{L^2_{x}}^2 + \frac{(m+1)m}{2n}\| \nabla u_n^{\frac{m+1}{2}}\|_{L^2_{x}}^2\right) \dd s  
 \\
 & \quad + \E \int_0^t\int_Q\frac{(m+1)m}{2} |u_n|^{m-1} |\sigma(u_n)|_{\ell^2}^2 \dd x \dd s
\end{equs}
By Poincare's inequality and Jensen's inequality we have
\begin{equs}
 \left( \E \|u_n\|_{L^{m+1}_x}^{m+1}\right)^{\frac{2m}{m+1}} \lesssim \| \nabla u^m_n\|_{L^2_{x}}^2.
\end{equs}
From this we obtain for $f(t):= \E\|u_n(t)\|_{L^{m+1}_x}^{m+1}$ the differential inequality
\begin{equs}
 f'(t) \leq -C_1 |f(t)|^{\frac{2m}{m+1}}+C_2f(t)+C_3,
\end{equs}
where for $i=1,..,3$, $C_i$ is a positive constant which depends on $d, K, m, |Q|$ but not on $T$. Using Young's inequality we obtain that
\begin{equs}
 f(t) \leq C \left(-|f(t)|^{\frac{2m}{m+1}} +1\right) 
\end{equs}
and by a simple comparison criterion (see \cite[Lemma 3.8]{TW18})
\begin{equs}
 f(t) \leq C (t\wedge 1)^{-\frac{m+1}{m-1}} 
\end{equs}
for some $C>0$ independent of $T$ and $f(0)$. The result follows if we let $n\to \infty$, since $u_n \to u$ in 
$L^1_{\omega,t}L^1_{w;x}$.
\end{proof}

\stepcounter{section}
\section*{Appendix \Alph{section}}

\begin{lemma} \label{lem:lwr_bd} For every $m\geq 1$ there exists $C>0$ such that
\begin{equation*} %\label{eq:lwr_bd}
 \left|\left(|u|^{m-1} u - |v|^{m-1}v\right)\right| \geq C |u-v|^m,
\end{equation*}
for every $u,v\in \RR$.
\end{lemma}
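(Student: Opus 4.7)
The plan is to reduce to nonnegative arguments and then split into two cases based on whether $u,v$ share a sign. Observe that the function $\phi(r) := |r|^{m-1} r$ is odd, so $|\phi(u) - \phi(v)| = |\phi(-u) - \phi(-v)|$ and $|u-v| = |(-u) - (-v)|$. Moreover, the statement is symmetric in $u,v$. Hence I may assume without loss of generality that $u \geq |v|$, and I will separately handle the two subcases $u \geq v \geq 0$ (same sign) and $u \geq 0 \geq v$ (opposite signs).

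In the same-sign case, $|\phi(u)-\phi(v)| = u^m - v^m$ and $|u-v|^m = (u-v)^m$. If $u=0$ both sides vanish; otherwise set $s := v/u \in [0,1]$ and note that proving $u^m - v^m \geq (u-v)^m$ is equivalent to $1 - s^m \geq (1-s)^m$. Writing $t = 1-s \in [0,1]$, this becomes $1 \geq t^m + (1-t)^m$, which follows immediately from $t^m \leq t$ and $(1-t)^m \leq 1-t$ for $t \in [0,1]$ and $m \geq 1$. Hence the inequality holds with constant $1$ in this case.

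In the opposite-sign case, $|\phi(u) - \phi(v)| = u^m + |v|^m$ and $|u-v|^m = (u + |v|)^m$. Since $r \mapsto r^m$ is convex on $[0,\infty)$ for $m \geq 1$, Jensen's inequality (or midpoint convexity) gives
\begin{equation*}
\left(\frac{u + |v|}{2}\right)^m \leq \frac{u^m + |v|^m}{2},
\end{equation*}
which rearranges to $u^m + |v|^m \geq 2^{1-m}(u+|v|)^m$. Combining both cases, the claim holds with $C := 2^{1-m}$ (which is $\leq 1$ for $m \geq 1$, so it also suffices for the same-sign case).

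There is no genuine obstacle here; the only thing to get right is the case split and the choice of the worst constant. The opposite-sign case is tight (achieved when $u = -v$), so $C = 2^{1-m}$ is essentially optimal.
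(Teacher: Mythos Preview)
Your proof is correct. Both your argument and the paper's share the same case split (same sign versus opposite sign, after reducing by symmetry), but the techniques inside the cases differ. The paper writes $\phi(u)-\phi(v)=m\int_v^u |z|^{m-1}\,\mathrm{d}z$ and estimates this integral: in the same-sign case by a translation argument using monotonicity of $z\mapsto |z|^{m-1}$ on $[0,\infty)$, and in the opposite-sign case by restricting the integration range to a subinterval of length $\tfrac12(u-v)$ lying on one side of the origin, which yields the constant $2^{-m}$. Your route is more direct and avoids the integral representation entirely: the same-sign case becomes the one-line inequality $t^m+(1-t)^m\leq 1$ on $[0,1]$, and the opposite-sign case is just midpoint convexity of $r\mapsto r^m$. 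Your approach is slightly more elementary and, as you note, gives the sharp constant $2^{1-m}$ (attained at $u=-v$), improving on the paper's $2^{-m}$.
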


\begin{proof} Let $f(z) = |z|^{m-1} z$, $z\in \RR$. Without loss of generality we can assume that $u-v>0$, which by 
monotonicity implies that $f(u) - f(v) = |u|^{m-1} u - |v|^{m-1}v$. We write
\begin{equation*}
 f(u) - f(v) = \int_v^u \frac{\dd}{\dd z} f(z) \dd z = m \int_v^u |z|^{m-1} \dd z.
\end{equation*}
We distinguish among the cases $u,v\geq 0$, $u,v\leq 0$ and $u\geq 0\geq v$.

\smallskip

Case $u,v\geq 0$: By a change of variables and the monotonicity of $\frac{\dd}{\dd z} f$ on $[0,\infty)$
we have that
\begin{align*}
 f(u) - f(v) = \int_0^{u-v} \frac{\dd}{\dd z}f(z+v) \dd z \geq \int_0^{u-v} \frac{\dd}{\dd z}f(z) \dd z
 = (u-v)^m.
\end{align*}

\smallskip

Case $u,v\leq 0$: By symmetry we get  
\begin{equation*}
 f(u) - f(v) = \int_{-u}^{-v} \frac{\dd}{\dd z}f(z) \dd z.
\end{equation*}
Then, similarly to the first case, we observe that
\begin{equation*}
 f(u) - f(v) = \int_0^{-(v-u)} \frac{\dd}{\dd z}f(z - u) \dd z \geq \int_0^{-(v-u)} f(z) \dd z = (u-v)^m.
\end{equation*}

\smallskip

Case $u\geq 0\geq v$: We first assume that $-\frac{1}{2} (u-v) \leq v \leq 0$. In this case 
$u\geq \frac{1}{2} (u-v)$ and we get the bound
\begin{equation*}
 f(u) - f(v) \geq \int_0^{\frac{1}{2}(u-v)} \frac{\dd}{\dd z} f(z) \dd z = \frac{1}{2^m} (u-v)^m. 
\end{equation*}
We now assume that $v\leq -\frac{1}{2} (u-v)$. Then
\begin{equation*}
 f(u) - f(v) \geq \int_{-\frac{1}{2}(u-v)}^0 \frac{\dd}{\dd z} f(z) \dd z = \frac{1}{2^m} (u-v)^m.
\end{equation*}

The above assertions complete the proof.
\end{proof}

\begin{lemma} \label{lem:int_ineq} Let $f,h:\RR \to \RR$ be continuous functions such that for every $s\leq t$
 \begin{align*}
  \begin{cases}
   & f(t) - f(s) \leq - C \int_s^t |f(\tau)|^{m-1} f(\tau) \dd \tau \\
   & h(t) - h(s) = - C \int_s^t |h(\tau)|^{m-1} h(\tau) \dd \tau 
  \end{cases}
 \end{align*}
 for some $C>0$. Then, provided $f(0) \leq h(0)$, we have that $f(t) \leq h(t)$ for every $t\geq 0$.
\end{lemma}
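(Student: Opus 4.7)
The plan is to argue by contradiction using a standard comparison/supremum argument, exploiting the fact that the function $\Psi(x) := |x|^{m-1}x$ is strictly increasing on $\RR$ for every $m \geq 1$.

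Suppose, toward a contradiction, that there exists $t_0 > 0$ with $f(t_0) > h(t_0)$. Since $f(0) \leq h(0)$ and both $f, h$ are continuous, the set $E := \{s \in [0, t_0] : f(s) \leq h(s)\}$ is nonempty (it contains $0$) and closed (as the preimage of $(-\infty, 0]$ under the continuous map $f - h$). Setting $s_0 := \sup E$, closedness gives $s_0 \in E$, while by maximality $f(t) > h(t)$ for every $t \in (s_0, t_0]$. Taking $t \to s_0^+$ and using continuity yields $f(s_0) \geq h(s_0)$, which combined with $s_0 \in E$ forces $f(s_0) = h(s_0)$.

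The next step is to combine the strict pointwise inequality $\Psi(f(\tau)) > \Psi(h(\tau))$ on $(s_0, t_0]$ with the two integral relations. Using $C > 0$ and the strict positivity of the continuous integrand $\Psi(f) - \Psi(h)$ on an interval of positive length, we obtain
\begin{equation*}
 f(t_0) - f(s_0) \leq -C \int_{s_0}^{t_0} \Psi(f(\tau)) \dd \tau < -C \int_{s_0}^{t_0} \Psi(h(\tau)) \dd \tau = h(t_0) - h(s_0).
\end{equation*}
Since $f(s_0) = h(s_0)$, this gives $f(t_0) < h(t_0)$, contradicting the choice of $t_0$. Hence $f(t) \leq h(t)$ for all $t \geq 0$.

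I do not expect any serious obstacle here; the only point to justify carefully is the strict inequality between the two integrals, which follows from the continuity of $\Psi \circ f - \Psi \circ h$ together with the fact that $f > h$ holds throughout the entire open interval $(s_0, t_0]$ of positive length, so that the (nonnegative) integrand $\Psi(f) - \Psi(h)$ is strictly positive on a set of positive measure.
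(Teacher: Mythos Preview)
Your argument is correct. The paper, by contrast, goes through an $\varepsilon$-perturbation: it introduces the solution $h_\varepsilon$ of $h_\varepsilon' = -C|h_\varepsilon|^{m-1}h_\varepsilon + \varepsilon$ with $h_\varepsilon(0)=h(0)$, proves $f\le h_\varepsilon$ by a contradiction based on differentiating at the first crossing time (the $\varepsilon$ supplies the strict inequality there), and then sends $\varepsilon\to 0$. Your route is more direct and avoids both the auxiliary ODE and the limit, because you extract strictness from the \emph{strict} monotonicity of $\Psi(x)=|x|^{m-1}x$ integrated over the interval $(s_0,t_0]$ where $f>h$. The price is that your argument genuinely needs that strict monotonicity, whereas the paper's perturbation trick would go through verbatim for a merely nondecreasing nonlinearity; here, of course, $\Psi$ is strictly increasing, so nothing is lost.
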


\begin{proof} Let $h_\eps(t)$ be the unique continuous solution of 
\begin{equation*}
 \begin{cases}
  & h_\eps'(t) = - C |h_\eps(t)|^{m-1} h_\eps(t) + \eps \\
  & h_\eps(0) = h(0).
 \end{cases}
\end{equation*}
We show that for every $t>0$ and $\eps\in(0,1)$ we have that $f(t) \leq h_\eps(t)$. The result is then immediate if we pass to the limit $\eps\to 0$. 

Assume for contradiction that there exists $t>0$ such that $f(t) > h_\eps(t)$. Using the continuity of $f$ and $h_\eps$ and the fact that $f(0) \leq h(0)$
we may assume that there exist $s <  t_* \leq t$ such that $f(s) = h_\eps(s)$ and $f(\tau) > h_\eps(\tau)$ for every $\tau\in (s,t_*]$. This implies that for every 
$\tau\in(s,t_*]$
\begin{align*}
 h_\eps(\tau) - h_\eps(s) < f(\tau) - f(s) \leq - C \int_s^\tau |f(\bar \tau)|^{m-1} f(\bar \tau) \dd \bar \tau. 
\end{align*}
Multiplying the last inequality by $(\tau-s)^{-1}$ and passing to the limit $\tau \to s$ we obtain that
\begin{align*}
 -C |h_\eps(s))|^{m-1} h_\eps(s) + \eps = h_\eps'(s) \leq - C |f(s)|^{m-1} f(s)
\end{align*}
which is a contradiction since $f(s) = h_\eps(s)$. This completes the proof.
\end{proof}

\pdfbookmark{References}{references}
\addtocontents{toc}{\protect\contentsline{section}{References}{\thepage}{references.0}}

\bibliographystyle{alpha}
\bibliography{porous_media_bibliography}{}

\begin{flushleft}
\small \normalfont
\textsc{Konstantinos Dareiotis\\
Max--Planck--Institut f\"ur Mathematik in den Naturwissenschaften\\
04103 Leipzig, Germany}\\
\texttt{\textbf{konstantinos.dareiotis@mis.mpg.de}}
\end{flushleft}

\begin{flushleft}
\small \normalfont
\textsc{Benjamin Gess\\
Max--Planck--Institut f\"ur Mathematik in den Naturwissenschaften\\ 
04103 Leipzig, Germany\\
Faculty of Mathematics, University of Bielefeld\\
33615 Bielefeld, Germany}\\
\texttt{\textbf{benjamin.gess@mis.mpg.de}}
\end{flushleft}

\begin{flushleft}
\small \normalfont
\textsc{Pavlos Tsatsoulis\\
Max--Planck--Institut f\"ur Mathematik in den Naturwissenschaften\\
04103 Leipzig, Germany}\\
\texttt{\textbf{pavlos.tsatsoulis@mis.mpg.de}}
\end{flushleft}

\end{document}